\providecommand\mathbb[1]{\mathsf{##1}}
	\providecommand\mathfrak[1]{\mathcal{##1}}}
\providecommand\mathbb[1]{\mathsf{#1}}
\providecommand\mathfrak[1]{\mathcal{#1}}
\newcommand{\R}{\mathbb{R}}
\newcommand{\dint}{\displaystyle\int}
\newtheorem{theorem}{Theorem}[section]
\newtheorem{corollary}[theorem]{Corollary}
\newtheorem{proposition}[theorem]{Proposition}
\newtheorem{definition}[theorem]{Definition}
\newtheorem{example}{Examples}[section]
\newtheorem{remark}[theorem]{Remark}
\newtheorem{lemma}[theorem]{Lemma}
\newromanexpr\Hess{Hess}
\begin{document}
	
	\title[]{Fractional Brownian motion with deterministic drift: How critical is drift regularity in hitting probabilities}
		\author[Mohamed Erraoui and  Youssef Hakiki]{MOHAMED ERRAOUI\\
		 Department of mathematics, Faculty of science El jadida,\\ Chouaïb Doukkali University,
		Morocco \\
	e-mail\textup{: \texttt{erraoui@uca.ac.ma}}
       \and\space  YOUSSEF HAKIKI\\
           Department of mathematics, Faculty of science Semlalia,\\ Cadi Ayyad University, 2390 Marrakesh, Morocco \\
           e-mail\textup{: \texttt{youssef.hakiki@ced.uca.ma}}}

 \maketitle

	\begin{abstract}
Let $B^{H}$ be a $d$-dimensional fractional Brownian motion
with Hurst index $H\in(0,1)$, $f:[0,1]\longrightarrow\mathbb{R}^{d}$
a Borel function, and $E\subset[0,1]$, $F\subset\mathbb{R}^{d}$
are given Borel sets. The focus of this paper is on hitting probabilities of the non-centered Gaussian process $B^{H}+f$. It aims to highlight how each component $f$, $E$ and $F$ is involved in determining the upper and lower bounds of $\mathbb{P}\{(B^H+f)(E)\cap F\neq \emptyset \}$. When $F$ is a singleton and $f$ is a general measurable drift, some new estimates are obtained for the last probability by means of suitables Hausdorff measure and capacity of the graph $Gr_E(f)$. As application we deal with the issue of polarity of points for $(B^H+f)\vert_E$ (the restriction of $B^H+f$ to the subset $E\subset (0,\infty)$).
\end{abstract}

\textbf{Keywords:} fractional Brownian motion, Hitting probabilities, Capacity, Hausdorff measure

\vspace{0,2cm}

\textbf{Mathematics Subject Classification:} 62134, 60J45, 60G17, 28A78


	\section{Introduction}

Hitting probabilities describes the probability that a given stochastic process
will ever reach some state or set of states $F$. To find upper and lower bounds for the hitting probabilities in terms of the Hausdorff measure and the capacity of the set $F$, is a fundamental problem in probabilistic potential theory. For $d$-dimensional Brownian motion $B$, the
probability that a path will ever visit a given set $F\subseteq\mathbb{R}^{d}$,
is classically estimated using the Newtonian capacity of $F$. Kakutani \cite{Kak44}
was the first to establish this result linking capacities and hitting
probabilities for Brownian motion. He showed that, for all $x\in\mathbb{R}^{d}$  and compact
set $F$ 
\[
\mathbb{P}_{x}\{B(0,\infty)\cap F\neq\emptyset\}>0 \quad  \Longleftrightarrow \quad \text{Cap}\left(F\right)>0,
\]
where $\text{Cap}$ denotes the capacity associated to the Newtonian and logarithmic kernels ($\left|\cdot\right|^{2-d}$, if $d\geq 3$  and $\left|\log\left(1/\cdot\right)\right|$, if $d=2$ ) respectively. The similar problem has been considered by Peres and Sousi \cite{Peres Sousi} for $d$-dimensional Brownian motion $B$ with $d\geq2$ with a drift function $f$.
They showed that for a $(1/2)$-Hölder continuous function $f:\mathbb{R}^{+}\longrightarrow\mathbb{R}^{d}$ there is a positive constant $c_2$ such that for all $x\in\mathbb{R}^{d}$ and all closed set $F\subseteq\mathbb{R}^{d}$ 
\begin{equation}\label{Per-Sou}
\mathsf{c}^{-1}_{1}\text{Cap}_{M}\left(F\right)\leq\mathbb{P}_{x}\{(B+f)(0,\infty)\cap F\neq\emptyset\}\leq \mathsf{c}_{1}\text{Cap}_{M}\left(F\right),
\end{equation}
where $\text{Cap}_{M}\left(\cdot\right)$ denotes the Martin capacity, see for example \cite{MP}. At the heart of their method is the strong Markov property.
It is noteworthy that since Kakutani's characterization, considerable efforts have been carried out to establish
a series of extensions to other processes, on the one hand, and, secondly, for a restricted subset $E\subset (0,\infty)$. This has given rise to
a large and rapidly growing body of scientific literature on the subject.
To cite a few examples, we refer to Xiao \cite{Xiao 1996} for developments on hitting probabilities of stationary
Gaussian random fields and fractional Brownian motion; to Pruitt and Taylor \cite{Pruitt Taylor} and Khoshnevisan \cite{Khoshnevisan 1997} for hitting probabilities
results for general stable processes and Lévy processes; to Khoshnevisan and Shi \cite{Khoshnevisan shi 1999} for hitting probabilities of the Brownian sheet; to Dalang and Nualart
\cite{Dalang Nualart 2004} for hitting probabilities for the solution of a system of nonlinear
hyperbolic stochastic partial differential equations;
to Dalang, Khoshnevisan and Nualart \cite{Dalang Khoshnevisan Nualart 2007} and \cite{Dalang Khoshnevisan Nualart 2009},  for hitting probabilities for the solution of a 
non-linear stochastic heat equation with additive and multiplicative noise respectively; to Xiao \cite{Xiao 2009}  Biermé,
Lacaux and Xiao \cite{Bierme Lacaux Xiao 2009} for hitting probabilities of anisotropic Gaussian random fields. 

An important remark should be made, Kakutani's characterization is not common to all the processes and this is generally due to process dependency structures. Thus the lower and upper bounds on hitting probabilities are obtained respectively in terms of capacity and Hausdorff measure of the product set $E\times F$. In this light, we cite Chen and Xiao result \cite{Chen Xiao 2012} which is actually an improvement of results established by Xiao 
\cite[Theorem 7.6]{Xiao 2009} and by Biermé, Lacaux and Xiao {\cite[Theorem 2.1]{Bierme Lacaux Xiao 2009}} on hitting probabilities of the $\mathbb{R}^{d}$-valued Gaussian
random field $X$ satisfying conditions $(C_{1})$ and $(C_{2})$  (see Xiao \cite{Xiao 2009} for precise definition) through the following estimates 
\begin{equation*}
\mathsf{c}_2^{-1}\mathcal{C}^d_{\rho_{\mathbf{d}_X}}(E\times F)\leq\mathbb{P}\{X(E)\cap F\neq\emptyset\}\leq \mathsf{c}_2\mathcal{H}_{\rho_{\mathbf{d}_{X}}}^{d}(E\times F),\label{Chen Xiao Estimate}
\end{equation*}
where $E\subseteq[a,1]^{N}$, $a\in(0,1)$
and $F\subseteq\mathbb{R}^{d}$ are Borel sets and $\mathsf{c}_2$ is a
constant which depends on $[a,1]^{N}$, $F$ and $H$ only. $\mathcal{C}^d_{\rho_{\mathbf{d}_{X}}}(\cdot)$ and $\mathcal{H}_{\rho_{\mathbf{d}_{X}}}^{d}(\cdot)$ denotes the Bessel-Riesz type capacity and the Hausdorff measure with respect to the metric $\rho_{{\mathbf{d}_{X}}}$, defined on $\mathbb{R}_+\times\mathbb{R}^d$ by $\rho_{{\mathbf{d}_{X}}}\left((s,x),(t,y)\right):=\max \left\{\mathrm{d}_X(s,t),\lVert x-y\rVert \right\}$ where $\mathrm{d}_X$ is the canonical metric of the Gaussian process $X$. Both of these terms are defined in the sequel.
 We emphasize that fractional Brownian motion belongs to the class of processes that satisfy the conditions $(C_{1})$ and $(C_{2})$. See Xiao \cite{Xiao 2009} for information on the conditions $(C_{1})$ and $(C_{2})$.

The similar issue for a multifractional Brownian motion $B^{H(\cdot)}$ governed by a Hurst function
$H(\cdot)$ and drifted by a function $f$ with the same Hölder exponent function $H(\cdot)$ was investigated by Chen \cite[Theorem 2.6]{Chen2013}. The hitting probabilities estimates upper and lower 
were given in terms of $\mathcal{H}^d_{\rho_{\mathrm{d}_{\underline{H}}}}(E\times F)$ and $\mathcal{C}_{\rho_{\mathrm{d}_{\overline{H}}}}^d(E\times F)$ respectively. The metrics $\mathrm{d}_{\underline{H}}$ and $\mathrm{d}_{\overline{H}}$ are defined by $|t-s|^{\underline{H}}$ and $|t-s|^{\overline{H}}$ respectively, where  $\underline{H}$ and $\overline{H}$ are the extreme values of the Hurst function $H(\cdot)$. Consequently, for the fractional Brownian $B^H$ of Hurst parameter $H$ (H is now time-independent), Chen's result can be formulated as follows
\begin{equation}\label{Chen estimate}
\mathsf{c}_3^{-1}\,\mathcal{C}^d_{\rho_{\mathbf{d}_{H}}}(E\times F)\leq\mathbb{P}\{(B^H+f)(E)\cap F\neq\emptyset\}\leq \mathsf{c}_3
\mathcal{H}_{\rho_{\mathbf{d}_{H}}}^{d}(E\times F),
\end{equation}
for any $H$-Hölder continuous drift $f$. {What we notice in both above-mentioned results \eqref{Per-Sou} and \eqref{Chen estimate} is that the drift $f$ has $H$ as the Hölder exponent which is out of reach for fractional Brownian paths. This brings us to consider the sensitivity problem of the above estimates with respect to the Hölder exponent of the drift.
More precisely, could we obtain the same results when the drift $f$ is $(H-\varepsilon)$-Hölder continuous for any $\varepsilon>0$ small enough?}

Our first objective in this work is to give an answer to this issue. First of all, we seek to provide some general results on Hausdorff measures and capacities that will be relevant to reaching our main goal. Precisely, we established some upper bounds for the Hausdorff measure (resp. lower bounds for the Bessel-Riesz capacity) of the product set $E\times F$ by means of a suitable Hausdorff measure (resp. Bessel-Riesz capacity) of one of the component sets $E$ and $F$ in a general metric space. If either $E$ or $F$ is "reasonably regular" in the sense of having equal Hausdorff and Minkowski dimensions, the bounds become as sharp as possible. This constitutes the content of Section 2.   

In consequence with the foregoing, we obtain practical and appropriate bound estimates of hitting probabilities for fractional Brownian motion with $H$-Hölder continuous drift $B^H+f$ involving only Hausdorff measure (for upper bound) and Bessel-Riesz capacity (for lower bound) of $E$ or $F$, which we believe are of independent interest. All of this and related details make up the content of Section 3.

With the contents of the aforementioned sections, in Section 4 we address the sensitivity question posed above and the answer is negative. The idea is to look for drifts, linked with potential theory, that are Hölder continuous without reaching the Hölder regularity of order $H$. To be more specific, we seek drifts with a modulus of continuity of the form $\mathsf{w}(x)= x^H\,\ell(x)$, where $\ell$ is a slowly varying function at zero in the sens of Karamata satisfying $\underset{x\rightarrow 0}{\limsup}\,\ell(x)=+\infty$, and having close ties to hitting probabilities.
%
%
The potential candidates that best match the requested features are the paths of Gaussian processes with covariance function satisfying
\begin{align*}\label{delta compar}
     \mathbb{E}\left(B^{\delta_{\theta_H}}_0(t)-B^{\delta_{\theta_H}}_0(s)\right)^2=\delta_{\theta_H}^2(|t-s|) \quad\text{ for all $t,s\in [0,1]$},
\end{align*}
for some regularly varying function function $\delta_{\theta_H}(r):= r^H\, \ell_{\theta_H}(r)$ such that its slowly varying part $\ell_{\delta_{\theta_H}}$ satisfies $\underset{r\rightarrow 0}{\limsup}\,\ell(r)=+\infty$. Indeed, on the one hand the paths of such Gaussian processes are continuous with an almost sure uniform modulus of continuity given by $\delta_{\theta_H}(r)\log^{1/2}(1/r)$ up to a deterministic constant and, most of all, we are going to take advantage of the hitting estimates already established for such processes on the other. Based on a result due to Taylor \cite{Taylor61} on the relationship between Hausdorff measure and capacity and using the estimates established in Section 2, we construct two sets $E$ and $F$ and a drift $f$, chosen amongst the candidates listed above, for which the inequalities in \eqref{Chen estimate} are not checked.

 \noindent From this follows another question which will be dealt with in Section 5: What about the hitting probabilities for general measurable drift?
 
 \noindent In view of the lack of information about the roughness of the drift $f$ it is difficult to carry out upper and lower bounds on hitting probabilities even for regular sets $E$ and $F$. We were only able to tackle the issue of hitting probabilities when F is a single point, for a thorough explanation see Remark \ref{hard extension}. The idea is to use information provided by the graph $Gr_E(f)=\{(t,f(t)) : t\in E\}$ of $f$ over the Borel set $E$ in order to investigate the hitting probabilities. We establish, for a general measurable drift, upper and lower bounds on hitting probabilities of the type
 \begin{equation}\label{Hak-Err estimates}
\mathsf{c}_4^{-1}\mathcal{C}^{d}_{\rho_{\mathbf{d}_{H}}}(Gr_E(f))\leq \mathbb{P}\left\lbrace \exists t\in E : (B^H+f)(t)=x\right\rbrace\leq \mathsf{c}_4\, \mathcal{H}_{\rho_{\mathbf{d}_{H}}}^{d}(Gr_E(f)).
\end{equation} 
In connection with our previous work \cite{Erraoui Hakiki} on whether the image $(B^H + f) (E)$ has a positive Lebesgue measure $\lambda_d(B^H + f)(E)$ we have relaxed, thanks to the estimation above, the main assumption $\dim_{\rho_{\mathbf{d}_{H}}}(Gr_E(f))>d$ in \cite[Theorem 3.2]{Erraoui Hakiki}. Indeed, we show that the weaker condition $\mathcal{C}_{\rho_{\mathbf{d}_{H}}}^{d}(Gr_E(f))>0$ is sufficient to provide $\lambda_d\left(B^H+f\right)(E)>0$ with positive probability.
 
 Section 6 is devoted to stress the link between the Hausdorff dimension $\dim(E)$ of $E$ and the polarity of points for $(B^H+f)\vert_E$ (the restriction of $B^H+f$ to the subset $E\subset (0,\infty)$). This will be done through the above estimates \eqref{Chen estimate} (with $F$ a single point) and \eqref{Hak-Err estimates} as we explain in more detail below. First when $f$ is $H$-Hölder continuous the above estimates \eqref{Hak-Err estimates} take the following form
\begin{equation}\label{hitt pts particular case}
\mathsf{c}_5^{-1}\, \mathcal{C}^{Hd}(E)\leq \mathbb{P}\left\lbrace \exists t\in E : (B^H+f)(t)=x\right\rbrace\leq \mathsf{c}_5\, \mathcal{H}^{Hd}(E).
\end{equation}
Hence the first conclusion that we can draw is: if $\dim(E)>Hd$ then $(B^H+f)\vert_E$ hits points, and if $\dim(E)<Hd$ then $(B^H+f)\vert_E$ doesn't hit points, i.e. points are polar for $(B^H+f)\vert_E$. It is thus obvious that  $\dim(E)=Hd$ is the critical dimension case. We show that this case is undecidable, in the sense that we construct two Borel sets $E_1,E_2\subset [0,1]$ such that $\dim(E_1)=\dim(E_2)=Hd$ for which $\left(B^H+f\right)\lvert_{E_1}$ hits points but $\left(B^H+f\right)\lvert_{E_2}$ doesn't. This suggest that the roughness provided by the $H$-Hölder continuity of the drift $f$ is insufficient to allow $(B^{H}+f)\lvert_E$ to hits points for general set $E$ in the critical dimension case and as foreseeable question: could this be done by adding little more roughness? The idea is to take advantage of the lower bound of the hitting probabilities in \eqref{Hak-Err estimates} by looking for drifts for which $\mathcal{C}^{d}_{\rho_{\mathbf{d}_{H}}}(Gr_E(f))>0$ even for the critical case  $\dim(E)=Hd$. Again, as before, we use independent Gaussian process of type $B^{\delta_{\theta_H}}$ to construct drifts with slightly more roughness than that provided by $H$-Hölder continuity and for which the last condition must be satisfied.
With an appropriate choice of ingredients, especially the kernel defining the capacity with respect to the potential theory associated to $B^{\delta_{\theta_H}}$, to satisfy the assumptions of Taylor's result \cite{Taylor61} we construct drifts $f$, that are $(H-\varepsilon)$-Hölder continuous for all small $\varepsilon>0$ without reaching order $H$, and allowing $(B^H+f)\lvert_E$ to hit points. We can safely say that the same method can be used in the case $\dim(E)< Hd$. Namely, we can construct a drift $f:[0,1]\rightarrow \mathbb{R}^d$ which is $(\alpha-\varepsilon)$-Hölder continuous for all small $\varepsilon>0$, with $\alpha:=\dim(E)/d<H$, such that  $(B^H+f)\vert_E$ hits points.

\noindent Now we introduce some useful notations throughout the paper. 

\noindent $|.| $ denotes the usual metric on $\mathbb{R}_+$ and $\lVert \cdot \rVert$ is the Euclidean norm on $\mathbb{R}^d$. For functions $f,g$, $f\asymp g$ means that there exists a constant $\mathsf{c}\geq 1$ such that $\mathsf{c}^{-1}g(\cdot)\leq f(\cdot)\leq \mathsf{c}\,g(\cdot)
$.

\section{Preliminaries on Hausdorff measures and capacities}
In this section, we would like to consider some comparison results that are intended to provide upper bound for the Hausdorff measure (resp. lower bound of the Bessel-Riesz capacity) of a product set $G_1\times G_2$ in terms of the Hausdorff measures (resp. in terms of capacity) of each of the component $G_1$ and $G_2$ with appropriate orders. Such results might be useful in studying the problem of hitting probabilities for $B^H+f$.
First of all, we need to recall definitions of Hausdorff measures as well as Bessel-Riesz capacities in a general metric space. 

Let $(\mathscr{X},\rho)$ be a bounded metric space, $\beta>0$ and $G \subset {\mathscr{X}}$. We define the $\beta$-dimensional Hausdorff measure of $G$ with respect to the metric $\rho$ as

\begin{equation}
	\mathcal{H}_{\rho}^{\beta}(G)=\lim_{\delta \rightarrow 0}\inf \left\{\sum_{n=1}^{\infty}\left(2 r_{n}\right)^{\beta}: G \subseteq \bigcup_{n=1}^{\infty} B_{\rho}\left(r_{n}\right), r_{n} \leqslant \delta\right\},\label{def Haus general metric}
	\end{equation}
	where $B_{\rho}(r)$ denotes an open ball of radius $r$ in the metric space $({\mathscr{X}},\rho)$. 
   \noindent The Hausdorff dimension of $G$ in the metric space $(X,\rho)$ is defined to be 
\begin{equation}\label{def Haus dim}
\dim_{\rho}(G)=\inf\{\beta>0: \mathcal{H}_{\rho}^{\beta}(G)=0\}\, \text{ \,for all\, $G\subset {\mathscr{X}}$}.     
\end{equation}
The usual $\beta$-dimensional Hausdorff measure and the Hausdorff dimension in Euclidean metric are denoted by $\mathcal{H}^{\beta}$ and $\dim(\cdot)$, respectively. Moreover, $\mathcal{H}^{\beta}$ is assumed equal to $1$ whenever $\beta\leq 0$.

We introduce now the Minkowski dimension of $E\subset ({\mathscr{X}},\rho)$. Let $N_{\rho}(E,r)$ be covering number, that is the smallest number of open balls of radius $r$ required to cover $E$. The lower and upper Minkowski dimensions of $E$ are respectively defined as 
	
	\begin{equation*}
	\begin{aligned}
	&\underline{\dim}_{\rho,M}(E):= \liminf_{r\rightarrow 0^+}\frac{\log N_{\rho}(E,r)}{\log(1/r)},\\
	&\overline{\dim}_{\rho,M}(E):= \limsup_{r\rightarrow 0^+}\frac{\log N_{\rho}(E,r)}{\log(1/r)}.
	\end{aligned}       
	\end{equation*}
 
	Equivalently, the upper Minkowski dimension of $E$ can be characterized by 
	\begin{equation}
	\overline{\dim}_{\rho,M}(E)=\inf\{\gamma : \exists\, C\in \mathbb{R}_+\, \text{ such that } N_{\rho}(E,r)\leqslant Cr^{-\gamma} \text{ for all } r>0 \}.\label{upper Minkowski}
	\end{equation}
In the Euclidean case, the upper Minkowski dimension will be denoted by $\overline{\dim}_{M}$.

The Bessel-Riesz type capacity of order $\alpha$ on  the metric space $({\mathscr{X}},\rho)$ is defined by 
	\begin{equation}
	\mathcal{C}_{\rho}^{\alpha}(G)=\left[\inf _{\mu \in \mathcal{P}(G)} \int_{\mathscr{X}} \int_{\mathscr{X}} \varphi_{\alpha}(\rho(u,v)) \mu(d u) \mu(d v)\right]^{-1},\label{def cap general metric}
	\end{equation}
	where $\mathcal{P}(G)$ is the family of probability measures carried by $G$ and the function $\varphi_{\alpha}:(0,\infty)\rightarrow (0,\infty)$ is given by
	\begin{equation}
	\varphi_{\alpha}(r)=\left\{\begin{array}{ll}{r^{-\alpha}} & {\text { if } \alpha>0,} \\ {\log \left(\frac{e}{r \wedge 1}\right)} & {\text { if } \alpha=0,} \\ {1} & {\text { if } \alpha<0.}\end{array}\right.
	\label{radial kernel}
	\end{equation} 
\noindent  We note that for $\alpha <0$ we have $\mathcal{C}_{\rho}^{\alpha }(G)=1$ for any nonempty $G$. The usual Bessel-Riesz capacity of order $\alpha$ in the Euclidean metric will be denoted by $\mathcal{C}^{\alpha}$.

Now let $\left({\mathscr{X}_i},\rho_i\right)$ $i=1,2$ are two metric spaces. Let $\rho_3$ be the metric on $\mathscr{X}_1\times \mathscr{X}_2$ given by
$$
	    \rho_3\left((u,x),(v,y)\right)=\max\{\rho_1(u,v),\rho_2(x,y)\}.
$$
\noindent The following proposition if the main result of this section.
\begin{proposition}\label{1st prop mastrand}
Let $\alpha>0$ and $G_i \subset  \mathscr{X}_i$, $i=1,2$. 
\begin{itemize}
\item[i)] If $G_1$ supports a probability measure $\mu$ that satisfies
\begin{equation}\label{estim frostman}
		\mu\left(B_{\rho_1}\left(a,r\right)\right)\leq \mathbf{\mathfrak{c}}_{1}\,r^{\gamma}\,\,\text{ for all \ensuremath{a\in G_1}, and all \, \ensuremath{0<r\leq diam(G_1)}},
\end{equation}
for some positive constants $\mathbf{\mathfrak{c}}_{1}$ and $\gamma$, then
there exists a constant $\mathbf{\mathfrak{c}}_{2}>0$ such that 
		\begin{equation} \label{lwr cap estim} \mathcal{C}_{\rho_2}^{\alpha-\gamma}(G_{2})\leq \mathbf{\mathfrak{c}}_{2}\,\mathcal{C}_{\rho_3}^{\alpha}(G_1\times G_2).
		\end{equation}
		
\item[ii)]If there exist constants $\gamma'< \alpha$ and $ \mathbf{\mathfrak{c}}_3>0$ such that
\begin{equation}\label{estim cover number 0}
N_{\rho_1}(G_1,r)\leq \mathbb{\mathfrak{c}}_3\, r^{-\gamma'} \text{ for all \, \ensuremath{0<r\leq diam(G_1)}},
\end{equation}
\end{itemize}
then we have
\begin{equation}\label{uppr Hausdorff estim}
     \mathcal{H}_{\rho_3}^{\alpha}(G_1\times G_2)\leq\, \mathbf{\mathfrak{c}}_{4} \mathcal{H}_{\rho_2}^{\alpha-\gamma'}(G_2),
\end{equation}
for some constant $\mathbf{\mathfrak{c}}_4>0$.

Similar estimates hold true if we assume that $G_2$ verifies assumptions $(i)$ and $(ii)$. That is there exist positive constants $\mathbf{\mathfrak{c}}_{5}$ and $\mathbf{\mathfrak{c}}_{6}$ such that 
\begin{equation} \label{lwr cap estim 1} \mathcal{C}_{\rho_1}^{\alpha-\gamma}(G_{1})\leq \mathbf{\mathfrak{c}}_{5}\,\mathcal{C}_{\rho_3}^{\alpha}(G_1\times G_2),
		\end{equation}
		
\begin{equation}\label{uppr Hausdorff estim 1}
     \mathcal{H}_{\rho_3}^{\alpha}(G_1\times G_2)\leq\, \mathbf{\mathfrak{c}}_{6} \mathcal{H}_{\rho_1}^{\alpha-\gamma'}(G_1).
\end{equation}

\end{proposition}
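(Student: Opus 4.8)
The plan is to establish the two estimates \eqref{lwr cap estim} and \eqref{uppr Hausdorff estim} separately, the symmetric pair following by interchanging the two factors at the end. Throughout I write $\mathcal{I}_{\rho}^{\beta}(\nu)=\int\int\varphi_{\beta}(\rho(u,v))\,\nu(du)\,\nu(dv)$ for the $\beta$-energy of a probability measure $\nu$, so that, by \eqref{def cap general metric}, $\mathcal{C}_{\rho}^{\beta}(G)=[\inf_{\nu\in\mathcal{P}(G)}\mathcal{I}_{\rho}^{\beta}(\nu)]^{-1}$. For the capacity bound \eqref{lwr cap estim} I may assume $\mathcal{C}_{\rho_2}^{\alpha-\gamma}(G_2)>0$ (otherwise there is nothing to prove). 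Since capacity is the reciprocal of an infimal energy, it is enough to produce a \emph{single} probability measure on $G_1\times G_2$ whose order-$\alpha$ energy for $\rho_3$ is at most $\mathfrak{c}\,\mathcal{I}_{\rho_2}^{\alpha-\gamma}(\sigma)$ for an arbitrary $\sigma\in\mathcal{P}(G_2)$. The natural candidate is the product $\lambda=\mu\otimes\sigma$, with $\mu$ the Frostman measure of \eqref{estim frostman}. Integrating out the $G_1$-variables first and using $\rho_3=\max\{\rho_1,\rho_2\}$, the whole matter reduces to the pointwise kernel estimate
\[
\Phi(R):=\int_{G_1}\int_{G_1}\varphi_{\alpha}\bigl(\max\{\rho_1(u,v),R\}\bigr)\,\mu(du)\,\mu(dv)\ \leq\ \mathfrak{c}\,\varphi_{\alpha-\gamma}(R),\qquad R>0,
\]
because then $\mathcal{I}_{\rho_3}^{\alpha}(\lambda)=\int\int\Phi(\rho_2(x,y))\,\sigma(dx)\,\sigma(dy)\leq\mathfrak{c}\,\mathcal{I}_{\rho_2}^{\alpha-\gamma}(\sigma)$, and choosing $\sigma$ nearly optimal for $\mathcal{C}_{\rho_2}^{\alpha-\gamma}(G_2)$ and reciprocating gives \eqref{lwr cap estim}.

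The estimate on $\Phi$ is the technical heart, and I would obtain it via the layer-cake formula. For fixed $u$ and $R$, the integrand $v\mapsto\max\{\rho_1(u,v),R\}^{-\alpha}$ never exceeds $R^{-\alpha}$, and it exceeds a level $t<R^{-\alpha}$ exactly on the ball $B_{\rho_1}(u,t^{-1/\alpha})$, so that
\[
\int_{G_1}\max\{\rho_1(u,v),R\}^{-\alpha}\,\mu(dv)=\int_{0}^{R^{-\alpha}}\mu\bigl(B_{\rho_1}(u,t^{-1/\alpha})\bigr)\,dt.
\]
Bounding $\mu(B_{\rho_1})\leq 1$ when the radius exceeds $\mathrm{diam}(G_1)$ and $\mu(B_{\rho_1})\leq\mathfrak{c}_1 t^{-\gamma/\alpha}$ by \eqref{estim frostman} otherwise, this integral splits into two explicit pieces. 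The main obstacle is essentially bookkeeping: one must run this computation in the three regimes $\gamma<\alpha$, $\gamma=\alpha$ and $\gamma>\alpha$, which produce respectively a bound of order $R^{-(\alpha-\gamma)}$, of order $\log(e/R)$ matching the logarithmic kernel, and a constant, and then check that each outcome is $\leq\mathfrak{c}\,\varphi_{\alpha-\gamma}(R)$ uniformly in $R$, with the boundedness of $\mathrm{diam}(G_1)$ absorbing the leftover endpoint terms into the constant. This is precisely the place where the $\max$-structure of $\rho_3$ and the exact form \eqref{radial kernel} of $\varphi$ are used.

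For the Hausdorff bound \eqref{uppr Hausdorff estim} I would argue by constructing economical coverings. Fix $\delta>0$ and take $G_2\subseteq\bigcup_n B_{\rho_2}(x_n,s_n)$ with $s_n\leq\delta$ and $\sum_n(2s_n)^{\alpha-\gamma'}$ within $\eps$ of $\mathcal{H}_{\rho_2}^{\alpha-\gamma'}(G_2)$; note that $\gamma'<\alpha$ guarantees $\alpha-\gamma'>0$, so no degenerate-exponent convention is needed here. For each $n$, use \eqref{estim cover number 0} to cover $G_1$ by $N_{\rho_1}(G_1,s_n)\leq\mathfrak{c}_3 s_n^{-\gamma'}$ balls $B_{\rho_1}(u_{n,k},s_n)$ of the \emph{same} radius. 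The key geometric observation is that, because $\rho_3$ is the maximum metric, a product of two balls of equal radius $s_n$ is exactly a $\rho_3$-ball of that radius, $B_{\rho_1}(u_{n,k},s_n)\times B_{\rho_2}(x_n,s_n)=B_{\rho_3}((u_{n,k},x_n),s_n)$. Hence $\{B_{\rho_3}((u_{n,k},x_n),s_n):n\geq1,\ 1\leq k\leq N_{\rho_1}(G_1,s_n)\}$ covers $G_1\times G_2$ with radii $\leq\delta$, and
\[
\sum_n\sum_{k=1}^{N_{\rho_1}(G_1,s_n)}(2s_n)^{\alpha}\ \leq\ \mathfrak{c}_3\sum_n s_n^{-\gamma'}(2s_n)^{\alpha}\ =\ \mathfrak{c}_3\,2^{\gamma'}\sum_n(2s_n)^{\alpha-\gamma'}.
\]
Letting $\eps\to0$ and then $\delta\to0$ yields $\mathcal{H}_{\rho_3}^{\alpha}(G_1\times G_2)\leq\mathfrak{c}_3\,2^{\gamma'}\mathcal{H}_{\rho_2}^{\alpha-\gamma'}(G_2)$, i.e.\ \eqref{uppr Hausdorff estim}.

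Finally, the symmetric estimates \eqref{lwr cap estim 1} and \eqref{uppr Hausdorff estim 1} follow word for word after interchanging the roles of $(\mathscr{X}_1,\rho_1,G_1)$ and $(\mathscr{X}_2,\rho_2,G_2)$, which is legitimate since $\rho_3$ is symmetric in its two factors; the only change is that now $G_2$ carries the Frostman measure (for the capacity bound) and the covering-number hypothesis (for the Hausdorff bound).
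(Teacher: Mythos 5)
Your proposal is correct and takes essentially the same route as the paper: for the capacity bound you form the product measure $\mu\otimes\sigma$ and reduce everything to the Frostman-type kernel estimate (the paper's Lemma \ref{estim Frostman lemma}, which it proves by summing over dyadic annuli where you use the layer-cake formula — an equivalent computation handling the same three regimes $\gamma<\alpha$, $\gamma=\alpha$, $\gamma>\alpha$), and for the Hausdorff bound you use the identical product-covering argument based on the fact that a product of equal-radius balls is a $\rho_3$-ball of that radius. No gaps; the endpoint bookkeeping you mention (absorbing $\operatorname{diam}(G_1)$-dependent terms into constants) is exactly what the paper's constants do.
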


\begin{remark}
1. By the mass distribution principle, we get from assertion i. that $
\dim_{\rho_1}(G_1)\geq \gamma
$.

\noindent 2. Using the characterization of the upper Minkowski dimension \eqref{upper Minkowski} we obtain from assertion ii. that $ \overline{\dim}_{\rho_1,M}(G_1)\leq \gamma'$ and $\mathcal{H}_{\rho_1}^{\gamma'}(G_1)<\infty$. Hence by the Frostman's energy method \cite[Theorem 6.4.6 p.173]{Bishop Peres} we immediately have $\mathcal{C}_{\rho_1}^{\gamma'}(G_1)=0$.
\end{remark}

\noindent The following lemma will help us to establish \eqref{lwr cap estim} (resp. \eqref{lwr cap estim 1}). 
\begin{lemma}\label{estim Frostman lemma}
Let $(\mathscr{X},\rho)$ be a bounded metric space and $\mu$ a probability measure supported on $\mathscr{X}$ satisfying 
\begin{align}\label{Frost cond 2}
    \mu\left(B_{\rho}(u,r)\right) \leq C_1 r^{\kappa} \quad \text{ for all $\,u\in \mathscr{X}$ and $\,r>0$,}
\end{align}
for some positive constants $C_1$ and $\kappa$. Then for any $\theta>0$, there exists $C_2>0$ such that
\begin{align}\label{estim Frostman}
     I(r):=\sup_{v \in \mathscr{X}}\int_\mathscr{X}\frac{\mu(du)}{\left(\max\{\rho(u,v),r\}\right)^{\theta}}\leq C_2\, \varphi_{\theta-\kappa}(r),
\end{align}
for all $r\in (0,\operatorname{diam(\mathscr{X})})$.
\begin{proof} Without loss of generality we can assume that $\operatorname{diam}(\mathscr{X})=1$. 
Three cases need to be discussed here: (i) $\theta<\kappa$, (ii) $\theta = \kappa$ and (iii) $\theta>\kappa$. For the first case $\theta<\kappa$, we have to show only that 
$$
\underset{r\in (0,1)}{\sup}I(r)<\infty.
$$
Indeed, for any $v\in X$ we have
 \begin{align*}\label{decomp energy}
    \int_\mathscr{X} \frac{\mu(du)}{\left(\max\{\rho(u,v),r\}\right)^{\theta}}&\leq 
    \int_\mathscr{X}\frac{\mu(du)}{\rho(u,v)^{\theta}}=\sum_{j=1}^{\infty}\int_{\{u:\rho(u,v)\in (2^{-j},2^{-j+1}]\}}\frac{\mu(du)}{\rho(u,v)^{\theta}}\\
    &\leq \sum_{j=1}^{\infty}2^{j\theta}\mu\left(B_{\rho}(v,2^{-j+1})\right)\leq C_1\,2^{\kappa} \sum_{j=1 }^{\infty}2^{-j(\kappa-\theta)}<\infty.
\end{align*}
  
  \noindent Now for $\theta\geq \kappa$, we have first 
  $$
  I(r)\leq \underset{:=I_1(r)}{\underbrace{\sup_{v\in \mathscr{X}}\int_{\{u: \rho(u,v)< r \}}\frac{\mu(du)}{r^{\theta}}}} + \underset{:=I_2(r)}{\underbrace{\sup_{v\in \mathscr{X}}\int_{\{u:\rho(u,v)\geq r\}}\frac{\mu(du)}{\rho(u,v)^{\theta}}}}
  $$
with $r\in \left(0,1\right)$. By using \eqref{Frost cond 2} we get 
 \begin{equation}\label{I1 power}
     I_1(r)\leq C_1\, r^{\kappa-\theta}.
 \end{equation}
 For estimating $I_2(r)$, we set $j(r):=\inf\{j: 2^{-j}\leq r\}$. Then we have 
 \begin{align}
     \{u:\rho(u,v)\geq r\}\subset \bigcup_{j=1}^{j(r)}\{u:2^{-j}\leq \rho(u,v)<2^{-j+1}\}
 \end{align}
Simple calculations and \eqref{Frost cond 2} ensures that for any $v\in \mathscr{X}$ we have  
 \begin{align}\label{I2 geom}
         \int_{\{u:\rho(u,v)\geq r\}}\frac{\mu(du)}{\rho(u,v)^{\theta}}&\leq \sum_{j=1}^{j(r)}2^{j\,\theta}\mu\left(\{u: 2^{-j}\leq \rho(u,v)<2^{-j+1}\}\right)\nonumber\\
         &\leq C_1\, 2^{\kappa}\sum_{j=1}^{j(r)}2^{j(\theta-\kappa)}.
 \end{align}
 It follows from the definition of $j(r)$ that $2^{-j(r)}\leq r< 2^{-j(r)+1}$. Then, for $\theta=\kappa$ we get easily that 
 \begin{align}\label{I2 log}
     I_1(r)\leq C_1 \quad \text{ and } \quad I_2(r)\leq C_3\,\log(e/r)=C_3\,\varphi_{\theta-\kappa}(r).
 \end{align}
 Hence we get the desired estimation for the case $(ii)$. For the last case $\theta>\kappa$, we use a comparison with a geometric sum  in \eqref{I2 geom} to obtain \begin{align}\label{I2 power}
      I_2(r)\leq C_4\, r^{\kappa-\theta}.
  \end{align} 
Putting \eqref{I1 power} and \eqref{I2 power} all together, the estimation \eqref{estim Frostman} follows.
\end{proof}
\end{lemma}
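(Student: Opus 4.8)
The plan is to prove the bound by a dyadic decomposition of the integral, splitting according to the sign of $\theta-\kappa$, which is exactly the trichotomy built into the kernel $\varphi_{\theta-\kappa}$ in \eqref{radial kernel}. First I would normalize so that $\operatorname{diam}(\mathscr{X})=1$: replacing $\rho$ by $\rho/\operatorname{diam}(\mathscr{X})$ rescales the constant in the Frostman hypothesis and the target $\varphi_{\theta-\kappa}$ by harmless multiplicative factors, so no generality is lost. The key structural observation throughout is that $\max\{\rho(u,v),r\}\geq\rho(u,v)$, together with the shell estimate $\mu(\{u:2^{-j}\le\rho(u,v)<2^{-j+1}\})\le\mu(B_\rho(v,2^{-j+1}))\le C_1\,2^{(1-j)\kappa}$ coming directly from \eqref{Frost cond 2}.

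For the subcritical case $\theta<\kappa$, where $\varphi_{\theta-\kappa}\equiv 1$, it suffices to bound $I(r)$ by a constant uniform in $r$. Since $\max\{\rho,r\}^{-\theta}\le\rho^{-\theta}$, I would drop the truncation and, for fixed $v$, decompose $\mathscr{X}$ into the shells $\{2^{-j}\le\rho(u,v)<2^{-j+1}\}$, on which $\rho^{-\theta}\le 2^{j\theta}$. Summing the shell measure bounds produces the series $\sum_{j\ge 1}2^{j\theta}2^{-j\kappa}$, which converges precisely because $\theta<\kappa$; taking the supremum over $v$ yields $\sup_r I(r)<\infty$.

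For $\theta\ge\kappa$ I would split the integral at radius $r$: write $I(r)\le I_1(r)+I_2(r)$, where $I_1$ runs over the ball $\{\rho(u,v)<r\}$, on which $\max\{\rho,r\}=r$, and $I_2$ over the complement $\{\rho(u,v)\ge r\}$, on which $\max\{\rho,r\}=\rho$. The Frostman bound applied to the ball of radius $r$ gives $I_1(r)\le C_1\,r^{\kappa-\theta}$ at once. For $I_2$ the decisive point is that $\{\rho\ge r\}$ meets only finitely many shells, indexed up to $j(r):=\inf\{j:2^{-j}\le r\}$, so the shell estimate yields $I_2(r)\le C_1 2^{\kappa}\sum_{j=1}^{j(r)}2^{j(\theta-\kappa)}$.

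It then remains to evaluate this finite sum, and here lies the only real subtlety: the critical case $\theta=\kappa$. When $\theta>\kappa$ the sum is geometric and dominated by its last term $2^{j(r)(\theta-\kappa)}$, which by $2^{-j(r)}\le r<2^{-j(r)+1}$ is comparable to $r^{\kappa-\theta}$; combined with $I_1$ this gives $I(r)\le C\,r^{\kappa-\theta}=C\,\varphi_{\theta-\kappa}(r)$. When $\theta=\kappa$ the geometric ratio degenerates to $1$, so the series no longer decays and one must instead simply count the shells, obtaining $I_2(r)\le C\,j(r)\asymp\log(1/r)$, which matches $\varphi_0(r)=\log(e/r)$ up to a constant while $I_1\le C_1$ is absorbed. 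Thus the heart of the argument is recognizing that the logarithm in the critical case comes from summing $O(\log(1/r))$ equal-sized contributions rather than a geometrically decaying series; everything else is routine bookkeeping of geometric sums.
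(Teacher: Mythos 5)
Your proposal is correct and takes essentially the same approach as the paper's proof: the identical dyadic-shell decomposition using the Frostman bound, the same splitting of $I(r)$ at radius $r$ into a ball term $I_1$ and a tail term $I_2$ when $\theta\geq\kappa$, and the same three-case treatment in which the shell sum is geometric (dominated by its last term, comparable to $r^{\kappa-\theta}$) for $\theta>\kappa$ and reduces to counting $O(\log(1/r))$ equal contributions in the critical case $\theta=\kappa$. There is no gap to report.
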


\begin{proof}[Proof of Proposition \ref{1st prop mastrand}]
We start by proving $(i)$. Let us suppose that $\mathcal{C}_{\rho_2}^{\alpha-\gamma}(G_2)>0$, otherwise there is nothing to prove. It follows that for $\eta \in (0,\mathcal{C}_{\rho_2}^{\alpha-\gamma}(G_2))$ there is a probability measure $m$ supported on $G_2$ such that
\begin{align}
\mathcal{E}_{\rho_2,\alpha-\gamma}(m):=\int_{G_2}\int_{G_2}\varphi_{\alpha-\gamma}(\rho_2(x,y))\,m(dx)m(dy)\leq \eta^{-1}.\label{E,d}
\end{align}
Since $\mu \otimes m$ is a probability measure on $G_1 \times G_2$, then applying Fubini’s theorem and Lemma \ref{estim Frostman lemma} we obtain 
\begin{align}
\begin{aligned}
    \mathcal{E}_{\rho_3,\alpha}(\mu \otimes m)&= \int_{G_1\times G_2} \int_{G_1\times G_2} \frac{\mu\otimes m(dudx) \mu\otimes m(dvdy)}{(\rho_3\left((u,x),(v,y)\right))^{\alpha}}\\
    &\leq  C_2\int_{G_2}\int_{G_2}\varphi_{\alpha-\gamma}(\rho_2(x,y))\,m(dx)m(dy)\leq C_2\,\eta^{-1}.\label{E,d,rho}
\end{aligned}
\end{align}
Consequently we have $\mathcal{C}_{\rho_3}^{\alpha}(G_1\times G_2)\geq  C_2^{-1}\,\eta$. Then we let  $\eta\uparrow\mathcal{C}_{\rho_2}^{\alpha-\gamma}(G_2)$  to conclude that the inequality in \eqref{lwr cap estim} holds true.

Now let us prove $(ii)$. Let $\zeta>\mathcal{H}_{\rho_2}^{\alpha-\gamma'}(G_2)$ be arbitrary. Then there is a covering of $G_2$ by open balls $B_{\rho_2}(x_{n},r_n)$ of radius $r_n$ such that 
	\begin{align}
G_2 \subset \bigcup_{n=1}^{\infty} B_{\rho_2}(x_{n},r_{n}) \quad \text { and } \quad \sum_{n=1}^{\infty} (2r_{n})^{\alpha-\gamma'} \leq \zeta.\label{covering for F}
	\end{align}
For all $n\geq 1$, let $B_{\rho_1}(u_{n,j},r_n)$, $j=1,...,N_{\rho_1}(G_1,r_n)$ be a family of open balls covering  $G_1$.  It follows that the family $B_{\rho_1}(u_{n,j},r_n)\times B_{\rho_2}(x_{n},r_n),\,j=1,...,N_{\rho_1}(G_1,r_n)$, $n \geq  1$ gives a covering of $G_1\times G_2$ by open balls of radius $r_n$ for the metric $\rho_3$. 
		
		It follows from \eqref{estim cover number 0} and \eqref{covering for F} that  
		\begin{align}
		\sum_{n=1}^{\infty} \sum_{j=1}^{N_{\rho_1}(G_1, r_n)}\left(2 r_{n}\right)^{\alpha} \leq \mathbf{\mathfrak{c}}_{3}\,2^{\gamma'}\, \sum_{n=1}^{\infty} (2r_{n})^{\alpha-\gamma'} \leq \mathbf{\mathfrak{c}}_{3}\,\,2^{\gamma'}\, \zeta.\label{covering for E*F}
		\end{align}
		Letting $\zeta\downarrow\mathcal{H}_{\rho_2}^{\alpha-\gamma'}(G_2)$, the inequality in \eqref{uppr Hausdorff estim} follows with $\mathbf{\mathfrak{c}}_{4}=\mathbf{\mathfrak{c}}_{3}\,\,2^{\gamma'}$.
\end{proof}
 In the following we give a sufficient condition ensuring hypotheses $(i)$ and $(ii)$ of Proposition \ref{1st prop mastrand}.
\begin{proposition}\label{2nd prop mastrand} The following condition 
$$
0<\gamma<\dim_{\rho_1}(G_1)\leq \overline{\dim}_{\rho_1,M}(G_1)<\gamma'<\alpha,
$$	
is sufficient to achieve \eqref{estim frostman} and \eqref{estim cover number 0}.
\begin{proof}
$(i)$ (resp. $(ii)$) is a direct consequence of Frostman's Theorem (resp. the characterization \eqref{upper Minkowski}) 
\end{proof}
\end{proposition}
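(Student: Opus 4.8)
The plan is to read off the two conclusions \eqref{estim frostman} and \eqref{estim cover number 0} separately, each from one side of the displayed chain, with $\gamma$ and $\gamma'$ playing the roles of the Frostman exponent and the covering exponent for $G_1$.

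For \eqref{estim frostman} the relevant hypothesis is $\gamma<\dim_{\rho_1}(G_1)$. By the definition \eqref{def Haus dim} of Hausdorff dimension, any exponent strictly below the dimension carries infinite, hence positive, Hausdorff measure, so $\mathcal{H}^{\gamma}_{\rho_1}(G_1)>0$. Frostman's theorem (in its metric-space form) then produces a probability measure $\mu$ on $G_1$ together with a constant $\mathfrak{c}_1>0$ for which $\mu(B_{\rho_1}(a,r))\le \mathfrak{c}_1\,r^{\gamma}$ on $0<r\le\operatorname{diam}(G_1)$, which is exactly \eqref{estim frostman}. The only point to record is that Frostman's theorem is usually phrased in Euclidean space; here one uses the version valid for compact (or analytic) subsets of a bounded metric space, so I would state $G_1$ compact, or pass to a compact subset of positive $\gamma$-measure.

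For \eqref{estim cover number 0} the relevant hypothesis is $\overline{\dim}_{\rho_1,M}(G_1)<\gamma'$, and here I would invoke the characterization \eqref{upper Minkowski} directly: it writes the upper Minkowski dimension as the infimum of all exponents admitting a constant $C$ with $N_{\rho_1}(G_1,r)\le C r^{-\gamma}$. Over the bounded range of radii $0<r\le\operatorname{diam}(G_1)$ this admissible set is upward closed, since for $r\le 1$ one has $r^{-\gamma}\le r^{-\gamma'}$ whenever $\gamma\le\gamma'$, while on the residual range $r\in(1,\operatorname{diam}(G_1)]$ the covering number is bounded and is absorbed into the constant. As $\gamma'$ lies strictly above the infimum it is therefore admissible, giving a constant $\mathfrak{c}_3$ with $N_{\rho_1}(G_1,r)\le\mathfrak{c}_3\,r^{-\gamma'}$, i.e.\ \eqref{estim cover number 0}.

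Finally I would observe that the outer inequalities $0<\gamma$ and $\gamma'<\alpha$ are structural rather than computational: $\gamma>0$ is the positivity required to apply part (i) of Proposition \ref{1st prop mastrand}, and $\gamma'<\alpha$ is precisely the constraint imposed in part (ii). The substance of the argument is thus the two dimension-to-estimate translations above, and the only subtlety I anticipate is the upward-closedness/membership step for $\gamma'$ in the infimum set, together with ensuring that Frostman's theorem is invoked in the metric-space generality in which the proposition is stated.
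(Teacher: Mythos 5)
Your proposal is correct and follows exactly the paper's own argument, which simply cites Frostman's Theorem for \eqref{estim frostman} and the characterization \eqref{upper Minkowski} for \eqref{estim cover number 0}; you have merely unpacked that one-line proof in detail (including the passage from $\gamma<\dim_{\rho_1}(G_1)$ to $\mathcal{H}^{\gamma}_{\rho_1}(G_1)>0$ and the absorption of the range $r>1$ into the constant). Your cautionary remarks about the metric-space form of Frostman's theorem and the upward-closedness of the admissible exponent set are sound refinements of the same route, not a departure from it.
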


It is well known that Hausdorff and Minkowski dimensions agree for many Borel sets $E$. Often this is linked on the one hand to the geometric properties of the set, on the other hand it is a consequence of the existence of a sufficiently regular measure. Among the best known are Ahlfors-David regular sets defined as follows.
	
\begin{definition}
Let $(\mathscr{X},\rho)$ be a bounded metric space, $\gamma>0$ and $G\subset \mathscr{X}$. We say that $G$ is $\gamma$-Ahlfors-David regular if there exists a finite positive Borel measure $\mu$ supported on $G$ and positive constant $\mathbf{\mathfrak{c}}_{\gamma}$ such that 
\begin{equation}\label{condition S}
		\mathbf{\mathfrak{c}}_{\gamma}^{-1}\,r^{\gamma}\leq\mu\left(B_{\rho}\left(a,r\right)\right)\leq\mathbf{\mathfrak{c}}_{\gamma}\,r^{\gamma}\,\,\text{ for all \ensuremath{a\in G}, and all \, \ensuremath{0<r\leq1}}.
\end{equation}
\end{definition}
\noindent For a Borel set $E\subset \mathbb{R}^n$ satisfying the condition \eqref{condition S} with $\rho$ is the euclidean metric of $\mathbb{R}^n$, it is shown in \cite[Theorem 5.7 p.$80$]{Mattila} that
$$ \gamma=\dim(E)=\underline{\dim}_M(E)=\overline{\dim}_M(E).
$$
This statement still true in a general metric space $(\mathscr{X},\rho)$, it suffices to go through the same lines of the proof of the euclidean case. Here we provide some examples of such sets.

\begin{example}
\textbf{i)} If $E$ is the whole interval $I$ then the condition \eqref{condition S} is satisfied with $\gamma=1$. This leads to the conclusion that the measure $\mu$ can be chosen as the normalized Lebesgue measure on $I$.\\
\textbf{ii)} The Cantor set $C(\lambda)$, $0<\lambda<1/2$, subset of $I$ with $\mu$ is the $\gamma$-dimensional Hausdorff measure restricted to $C(\lambda)$ where $\gamma=\dim C(\lambda)=\log(2)/\log(1/\lambda)$. For more details see \cite[Theorem 4.14 p.$67$]{Mattila}. In general, self similar subsets of $\mathbb{R}$ satisfying the open set condition are standard examples of Ahlfors-David regular sets, see \cite{Hutchinson}.
\end{example}	

 The following proposition states that when $G_1$ (resp. $G_2$) is $\gamma_1$-Ahlfors-David regular set (resp. $\gamma_2$-Ahlfors-David regular set), then inequalities \eqref{lwr cap estim} and \eqref{uppr Hausdorff estim} of Proposition \ref{1st prop mastrand} are checked for $\gamma=\gamma'=\gamma_1$ (resp. for $\gamma=\gamma'=\gamma_2$).

\begin{proposition}\label{3rd prop mastrand} Let $\alpha>0$, and
assume that $G_1$ is $\gamma_1$-Ahlfors-David regular set.

\begin{itemize}
    \item[i)] If $\gamma_1\leq \alpha$ then  
    \begin{equation}\label{ineq cap 1}
        \mathcal{C}_{\rho_2}^{\alpha-\gamma_1}(G_{2})\leq \mathbf{\mathfrak{c}}_{7}\,\mathcal{C}_{\rho_3}^{\alpha}(G_1\times G_2).
    \end{equation}
    \item[ii)] If $\gamma_1<\alpha$ then 
    \begin{equation}\label{ineq haus 1}
  \mathcal{H}_{\rho_3}^{\alpha}(G_1\times G_2)\leq\, \mathbf{\mathfrak{c}}_{8} \mathcal{H}_{\rho_2}^{\alpha-\gamma_1}(G_2).
\end{equation}
\end{itemize}

\noindent Similar estimates hold true under the assumption $G_2$ is $\gamma_2$-Ahlfors-David regular set. Precisely we have
\begin{equation}\label{lwr cap estim 2}
        \mathcal{C}_{\rho_1}^{\alpha-\gamma_2}(G_{1})\leq \mathbf{\mathfrak{c}}_{9}\,\mathcal{C}_{\rho_3}^{\alpha}(G_1\times G_2),
    \end{equation}
    
    \begin{equation}\label{uppr Hausdorff estim 2}
  \mathcal{H}_{\rho_3}^{\alpha}(G_1\times G_2)\leq\, \mathbf{\mathfrak{c}}_{10} \mathcal{H}_{\rho_1}^{\alpha-\gamma_2}(G_1).
\end{equation}
\begin{proof}
In order to prove \eqref{ineq cap 1} and \eqref{ineq haus 1} it is sufficient to check that conditions \eqref{estim frostman} and \eqref{estim cover number 0} are satisfied with $\gamma=\gamma'=\gamma_1$. Firstly, \eqref{estim frostman} is no other than the right inequality in \eqref{condition S}. On the other hand, let $0<r\leq 1$ and $P_{\rho_1}(G_1,r)$ be the packing number, that is the greatest number of disjoint balls $B_{\rho_1}(x_j,r)$ with $x_{j}\in G_1$. The left inequality of \eqref{condition S} ensures that 
$$
\mathbf{\mathfrak{c}}_{\gamma_1}^{-1}\, P_{\rho_1}(G_1,r)\, r^{\gamma_1}\leq \sum_{j=1}^{P_{\rho_1}(G_1,r)}\mu\left(B_{\rho_1}(x_j,r)\right)=\mu (G_1)\leq 1.
$$ 
Using the well known fact that
$N_{\rho_1}(G_1,2\,r)\leq P_{\rho_1}(G_1,r)$,
we obtain the desired estimation \eqref{estim cover number 0}. 
\end{proof}
\end{proposition}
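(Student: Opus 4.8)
The plan is to read both assertions off Proposition~\ref{1st prop mastrand} after checking that $\gamma_1$-Ahlfors--David regularity of $G_1$ supplies, at one stroke, its two hypotheses \eqref{estim frostman} and \eqref{estim cover number 0} with the single choice $\gamma=\gamma'=\gamma_1$. Indeed, once this is done, the conclusions \eqref{lwr cap estim} and \eqref{uppr Hausdorff estim} become verbatim \eqref{ineq cap 1} and \eqref{ineq haus 1}; the companion estimates \eqref{lwr cap estim 2}--\eqref{uppr Hausdorff estim 2} then follow from the symmetry of $\rho_3$ in its two arguments, i.e. by interchanging the roles of $(G_1,\rho_1)$ and $(G_2,\rho_2)$.

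First I would fix the measure $\mu$ on $G_1$ furnished by \eqref{condition S}. The right-hand inequality $\mu(B_{\rho_1}(a,r))\leq\mathbf{\mathfrak{c}}_{\gamma_1}r^{\gamma_1}$ is literally the Frostman-type condition \eqref{estim frostman} with $\gamma=\gamma_1$ and $\mathbf{\mathfrak{c}}_1=\mathbf{\mathfrak{c}}_{\gamma_1}$. Assertion~(i) of Proposition~\ref{1st prop mastrand} then delivers \eqref{ineq cap 1}, the standing hypothesis $\gamma_1\leq\alpha$ serving only to keep the order $\alpha-\gamma_1$ of the capacity on the left-hand side nonnegative, so that it is a genuine Bessel--Riesz (or logarithmic) capacity rather than the trivial one.

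The one estimate I actually have to carry out is the covering-number bound \eqref{estim cover number 0}, and here I would lean on the \emph{left-hand} inequality of \eqref{condition S}. Fix $0<r\leq1$ and let $P_{\rho_1}(G_1,r)$ be the packing number, i.e. the maximal number of pairwise disjoint balls $B_{\rho_1}(x_j,r)$ with centers $x_j\in G_1$. Disjointness together with $\mathbf{\mathfrak{c}}_{\gamma_1}^{-1}r^{\gamma_1}\leq\mu(B_{\rho_1}(x_j,r))$ gives
\begin{equation*}
\mathbf{\mathfrak{c}}_{\gamma_1}^{-1}\,P_{\rho_1}(G_1,r)\,r^{\gamma_1}\leq\sum_{j}\mu\bigl(B_{\rho_1}(x_j,r)\bigr)\leq\mu(G_1)<\infty,
\end{equation*}
so that $P_{\rho_1}(G_1,r)\leq\mathbf{\mathfrak{c}}_{\gamma_1}\,\mu(G_1)\,r^{-\gamma_1}$; the classical comparison $N_{\rho_1}(G_1,2r)\leq P_{\rho_1}(G_1,r)$ between covering and packing numbers then yields \eqref{estim cover number 0} with $\gamma'=\gamma_1$ and $\mathbf{\mathfrak{c}}_3=2^{\gamma_1}\mathbf{\mathfrak{c}}_{\gamma_1}\mu(G_1)$. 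Since the hypothesis of part~(ii) is $\gamma_1<\alpha$, the strict requirement $\gamma'<\alpha$ of Proposition~\ref{1st prop mastrand}(ii) is met, and \eqref{ineq haus 1} follows.

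I do not anticipate a genuine obstacle: the whole content is the observation that Ahlfors--David regularity packages a Frostman upper bound and a Minkowski-type covering bound with the \emph{same} exponent $\gamma_1$. The only minor care needed is the finiteness of the total mass $\mu(G_1)$, which is immediate from \eqref{condition S} and the boundedness of $G_1$ (on taking $r$ of the order of $\operatorname{diam}(G_1)$ about any fixed center), whence $\mu(G_1)\leq\mathbf{\mathfrak{c}}_{\gamma_1}$; after absorbing $\mu(G_1)$ into the constants, the argument is perfectly symmetric in the two factors and reproduces \eqref{lwr cap estim 2} and \eqref{uppr Hausdorff estim 2} word for word.
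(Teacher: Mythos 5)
Your proposal is correct and follows essentially the same route as the paper's own proof: the right-hand inequality of \eqref{condition S} is read off as the Frostman condition \eqref{estim frostman} with $\gamma=\gamma_1$, the covering bound \eqref{estim cover number 0} is derived from the left-hand inequality via the packing number $P_{\rho_1}(G_1,r)$ and the comparison $N_{\rho_1}(G_1,2r)\leq P_{\rho_1}(G_1,r)$, and then Proposition \ref{1st prop mastrand} is invoked with $\gamma=\gamma'=\gamma_1$ (and symmetrically for $G_2$). The only cosmetic difference is that you carry the factor $\mu(G_1)$ explicitly in the constant, whereas the paper normalizes $\mu(G_1)\leq 1$.
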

\begin{remark}
Notice that when both $G_i$, $i=1,2$ are $\gamma_i$-Ahlfors David regular sets for some constant $\gamma_i>0$, then there exist two positive constants $\mathbf{\mathfrak{c}}_{11}$ and $\mathbf{\mathfrak{c}}_{12}$ such that 
\begin{align}
\mathcal{C}_{\rho_3}^{\gamma_1+\gamma_2}(G_1\times G_2)\geq \mathbf{\mathfrak{c}}_{11}\,\mathcal{C}_{\rho_2}^{\gamma_2}(G_{2})\vee \mathcal{C}_{\rho_1}^{\gamma_1}(G_{1}) \quad \text{ and }\quad \mathcal{H}_{\rho_3}^{\gamma_1+\gamma_2}(G_1\times G_2)\leq \mathbf{\mathfrak{c}}_{12}\,\mathcal{H}_{\rho_2}^{\gamma_2}(G_{2})\wedge \mathcal{H}_{\rho_1}^{\gamma_1}(G_{1}). 
\end{align}
\end{remark}
\section{Hitting probabilities for fractional Brownian motion with deterministic regular drift}
\noindent Let $H \in (0,1)$ and $B^{H}_0=\left\lbrace B^{H}_0(t),t \geq 0\right\rbrace $ be a real-valued fractional Brownian motion  of Hurst index $H$ defined on a complete  probability space $(\Omega,\mathcal{F},\mathbb{P})$, i.e. a real valued Gaussian process with stationary increments and covariance function given by $$\mathbb{E}(B^{H}_0(s)B^{H}_0(t))=\frac{1}{2}(|t|^{2H}+|s|^{2H}-|t-s|^{2H}).$$
	Let  $B^{H}_1,...,B^{H}_d$ be $d$ independent copies of $B^{H}_0$. The stochastic process $B^{H}=\left\lbrace B^{H}(t),t\geq 0\right\rbrace $  given by  
	\[
	B^{H}(t)=(B^{H}_1(t),....,B^{H}_d(t)) ,
	\]
	is called a $d$-dimensional fractional Brownian motion of Hurst index $H \in (0,1)$.

For $\mathbf{d}$ a metric on $\mathbb{R}_+$, we define on $\mathbb{R}_+\times\mathbb{R}^d$ the metric $\rho_{\mathbf{d}}$ as follows 	
\begin{equation}
\rho_{\mathbf{d}}((s,x),(t,y))=\max\{\mathbf{d}\left(t,s\right),\|x-y\|\}\quad \forall (s,x), (t,y)\in \mathbb{R}_+\times\mathbb{R}^d.\label{def product metric}
\end{equation}
\noindent We denote by $\mathbf{d}_{H}$ the canonical metric of $B^H$ given by
\begin{align}\label{canonical metric of fBm}
      \mathbf{d}_{H}(s,t):=|t-s|^H \, \quad \text{for all } s,t \in \mathbb{R}_+.
\end{align}
The associated metric $\rho_{\mathbf{d}_{H}}$ on $\mathbb{R}_+\times \mathbb{R}^d$ is called the parabolic metric.

Let $\mathrm{\mathbf{I}} = [a, b]$, where $a<b \in (0,1]$ are fixed constants. For $\alpha\in (0,1)$, $C^{\alpha}(\mathbf{I})$ is the space of $\alpha$-Hölder continuous function $f : [a,b] \longrightarrow \mathbb{R}^d $ equipped with the norm 
	\begin{equation}\label{mu-norm}
		\lVert f \rVert_{\alpha}:= \sup_{s\in \mathbf{I}} \lVert f(s) \rVert +\underset{\substack{s, t \in \mathbf{I} \\ s \neq t}}{\sup}\dfrac{\lVert f(t)-f(s) \rVert}{|t-s|^{\alpha}}<+\infty.
	\end{equation}
First we state the following result, which is easily deduced from \cite[Theorem 2.6]{Chen2013}.

\begin{theorem}\label{main theorem hitting}
Let $\{B^H(t), t\in [0,1]\}$ be a $d$-dimensional fractional Brownian motion and $f \in C^{H}\left(\mathbf{I}\right)$. Let $F\subseteq \mathbb{R}^d$ and $E\subset \mathbf{I}$ are two compact subsets. Then there exist a constant $c\geq 1$ depending on $\mathbf{I}$, $F$, $H$ and $f$ only, such that
\begin{equation}
c^{-1}\mathcal{C}_{\rho_{\mathbf{d}_{H}}}^{d}(E\times F)\leq \mathbb{P}\{(B^H+f)(E)\cap F\neq \emptyset \}\leq c\,\,\mathcal{H}_{\rho_{\mathbf{d}_{H}}}^{d}(E\times F).\label{upper-lower bounds 1}
\end{equation}
	\end{theorem}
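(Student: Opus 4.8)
The plan is to prove the two bounds separately: the upper bound by a covering argument and the lower bound by a second–moment (Paley–Zygmund) argument, with the $H$-Hölder hypothesis $f\in C^H(\mathbf{I})$ entering at the crucial points in each. Since the statement is the fractional Brownian motion specialization of Chen's Theorem 2.6, one could simply invoke that result and read off the kernel $\rho_{\mathbf{d}_H}$; I instead sketch the self-contained argument, because it exposes the precise role played by the regularity of the drift, which is what the later sections test for sharpness.

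For the upper bound, note first that compactness of $F$ places it in some box $[-M_0,M_0]^d$, so with $M:=M_0+\sup_{s\in\mathbf{I}}\|f(s)\|$ the translated targets $y-f(t)$ remain in $[-M,M]^d$ for $(t,y)\in\mathbf{I}\times F$. The analytic engine is the uniform small–ball estimate for fBm (Biermé–Lacaux–Xiao): there are constants $c,\delta_0>0$ with $\mathbb{P}\{\inf_{|s-t|^H\le r}\|B^H(s)-x\|\le r\}\le c\,r^d$ for all small $r$, all $t\in\mathbf{I}$ and all $x\in[-M,M]^d$. I would fix $\gamma>\mathcal{H}^d_{\rho_{\mathbf{d}_H}}(E\times F)$, take a parabolic covering $\{B_{\rho_{\mathbf{d}_H}}((t_i,y_i),r_i)\}$ of $E\times F$ with $\sum(2r_i)^d\le\gamma$ and each $r_i$ small, and decompose the hitting event over these balls. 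On a single ball the Hölder bound $\|f(s)-f(t_i)\|\le K|s-t_i|^H<K\,r_i$ (valid since $|s-t_i|^H<r_i$) lets me replace $B^H+f$ by $B^H$ at the cost of enlarging the radius by $K+1$, so the small–ball estimate controls each piece by a multiple of $r_i^d$. Summing and letting $\gamma\downarrow\mathcal{H}^d_{\rho_{\mathbf{d}_H}}(E\times F)$ delivers the upper bound.

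For the lower bound I would assume $\mathcal{C}^d_{\rho_{\mathbf{d}_H}}(E\times F)>0$ and choose a Frostman probability measure $\mu$ on $E\times F$ whose $d$-energy satisfies $\mathcal{E}_{\rho_{\mathbf{d}_H},d}(\mu)\le 2/\mathcal{C}^d_{\rho_{\mathbf{d}_H}}(E\times F)$. Then I would define approximating random measures $\mu_n$ by Gaussian mollification of the formal density of $\{(B^H+f)(s)=x\}$, rewritten through the characteristic function of a Gaussian vector. The argument reduces to the two moment estimates $\mathbb{E}\|\mu_n\|\ge c_8>0$ and $\mathbb{E}\|\mu_n\|^2\le c_9\,\mathcal{E}_{\rho_{\mathbf{d}_H},d}(\mu)$; via Paley–Zygmund these force a weak subsequential limit $\tilde\mu$, supported on $\{(s,x):B^H(s)+f(s)=x\}$, to be nonzero with probability at least $[\mathbb{E}\|\tilde\mu\|]^2/\mathbb{E}\|\tilde\mu\|^2\gtrsim\mathcal{C}^d_{\rho_{\mathbf{d}_H}}(E\times F)$. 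The first–moment bound is immediate from the boundedness of $F$ and $f$.

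The main obstacle is the second–moment estimate. Here one must dominate the Gaussian double–integral kernel $I_n((s,x),(t,y))$ by the parabolic Riesz kernel $\rho_{\mathbf{d}_H}((s,x),(t,y))^{-d}=\max\{|s-t|^{Hd},\|x-y\|^d\}^{-1}$. Two ingredients are decisive. First, the strong local nondeterminism of $B^H_0$ gives $\det\Gamma_{1/n}(s,t)\ge c_{10}|t-s|^{2H}$, controlling the denominator. Second — and this is exactly where the $H$-Hölder regularity of the drift is indispensable — expanding $\|x-y+f(s)-f(t)\|^2$ produces a cross term governed by $\|f(s)-f(t)\|/\sqrt{\det\Gamma_{1/n}(s,t)}$, which the Hölder bound keeps uniformly bounded by $K/\sqrt{c_{10}}$. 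Absorbing this into a constant through $\sup_{x\ge0}\exp(-c_3x^2/4+c\,x)<\infty$ reduces $I_n$ to the parabolic kernel, and the two cases $\det\Gamma_{1/n}\ge\|x-y\|^2$ and $\det\Gamma_{1/n}<\|x-y\|^2$ then give the max–form bound, whence $\mathbb{E}\|\mu_n\|^2\lesssim\mathcal{E}_{\rho_{\mathbf{d}_H},d}(\mu)$. Were $f$ merely $(H-\varepsilon)$-Hölder, this cross term could diverge as $s\to t$, which is precisely the sensitivity mechanism the paper exploits later to show the estimate can fail.
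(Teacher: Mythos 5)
Your proof is correct, but it is not the route the paper takes: the paper disposes of this theorem in one line, observing that it is the constant-Hurst specialization of Chen's Theorem 2.6 for multifractional Brownian motion with an $H(\cdot)$-H\"older drift (a route you mention and then set aside). Your self-contained argument --- a parabolic covering plus the Bierm\'e--Lacaux--Xiao small-ball estimate for the upper bound, and a Frostman measure, Gaussian mollification, strong local nondeterminism and Paley--Zygmund for the lower bound --- is exactly the machinery that underlies Chen's theorem, and the steps check out: the enlargement of the radius by $1+K$ in the covering step, and the uniform bound on the cross term $\|f(s)-f(t)\|/\sqrt{\det\Gamma_{1/n}(s,t)}\le K/\sqrt{c_{10}}$ coming from SLND, are precisely the two places where $f\in C^{H}(\mathbf{I})$ enters, as you say. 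What the citation buys the paper is brevity; what your direct proof buys is visibility of where the $H$-H\"older regularity is indispensable (the cross term blows up as $s\to t$ for any rougher modulus), which is exactly the sensitivity mechanism the paper goes on to exploit in Section 4, so the two presentations are complementary rather than in conflict. One small point you should make explicit in the first-moment estimate: besides boundedness of $F$ and $f$, you need $E\subset\mathbf{I}=[a,b]$ with $a>0$, so that the variance $s^{2H}$ is bounded below and the factor $\exp\left(-\|x-f(s)\|^{2}/(2s^{2H})\right)$ cannot degenerate near $s=0$; this is where the hypothesis $\mathbf{I}\subset(0,1]$ is silently used.
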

	The goals of this section is to provide the hitting probabilities estimates for some particular sets $E$ and $F$. Such estimates would be helpful in the next section to establish a kind of of sharpness of the $H$-Hölder regularity of the drift $f$ in Theorem \ref{main theorem hitting}.
 
\subsection{Hitting probabilities estimates when $E$ is $\beta$-Ahlfors-David regular set}


	First let us recall that, when $E$ is an interval, Corollary 2.2 in \cite{Chen Xiao 2012} ensures that there exists a constant  $ \mathsf{c}\geq 1$ depending only on $E$, $F$ and $H$ such that 
	$$\mathsf{c}^{-1}\,\mathcal{C}^{d-1/H}(F)\leq \mathbb{P}\left\{ B^H(E)\cap F\neq \emptyset \right\}\leq \mathsf{c} \,\mathcal{H}^{d-1/H}(F),$$ for any Borel set $F\subseteq \mathbb{R}^d$. 
	Our next goal is to establish such estimates for $(B^H+f)$ and $E$ a Borel set.

	\begin{proposition}\label{corollary hitting}
		Let $B^H$, $f$, $E$ and $F$ as in Theorem \ref{main theorem hitting} such that $0<\dim(E) \leq \overline{\dim}_M(E)<Hd$. Then for any  $0<\beta<\dim(E)\leq \overline{\dim}_M(E)<\beta'<Hd$, we have 
		\begin{equation}
		\mathtt{c}_1^{-1}\,\mathcal{C}^{d-\beta/H}(F)\leq \mathbb{P}\{(B^H+f)(E)\cap F\neq \emptyset \}\leq \mathtt{c}_1\,\mathcal{H}^{d-\beta'/H}(F) ,\label{lower bound 2}
		\end{equation} 
		where $\mathtt{c}_{1}$ is a positive constant depends only on $E$, $F$, $H$, $\mathrm{K}_f$, $\beta$ and $\beta'$. 
\end{proposition}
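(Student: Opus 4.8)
The plan is to deduce the statement from the product estimates already in hand by combining Theorem \ref{main theorem hitting} with the comparison inequalities of Proposition \ref{1st prop mastrand}. I read the parabolic metric $\rho_{\mathbf{d}_{H}}$ on $\mathbb{R}_+\times\mathbb{R}^d$ as the product metric $\rho_3$ of Proposition \ref{1st prop mastrand}, built from $\rho_1=\mathbf{d}_{H}$ on $\mathbb{R}_+$ and $\rho_2=\|\cdot\|$ on $\mathbb{R}^d$, and take $\alpha=d$, $G_1=E$, $G_2=F$. The single computation that makes everything line up is that $\mathbf{d}_{H}$ is a snowflake of the Euclidean metric: since $\mathbf{d}_{H}(s,t)=|t-s|^{H}$, a $\mathbf{d}_{H}$-ball is a reparametrised Euclidean ball,
\begin{equation*}
B_{\mathbf{d}_{H}}(a,r)=\{t:\ |t-a|<r^{1/H}\}=B(a,r^{1/H}),
\end{equation*}
and consequently the covering numbers satisfy $N_{\mathbf{d}_{H}}(E,r)=N(E,r^{1/H})$. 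This turns Euclidean Frostman/covering data for $E$ into $\mathbf{d}_{H}$-data with the orders divided by $H$, which is exactly what is needed to match the exponents $d-\beta/H$ and $d-\beta'/H$ in the statement.

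For the lower bound I would fix $\beta<\dim(E)$. Then $\mathcal{H}^{\beta}(E)>0$, so Frostman's theorem provides a probability measure $\mu$ on the compact set $E$ with $\mu(B(a,\varrho))\le \mathsf{c}\,\varrho^{\beta}$ for all $a\in E$ and $\varrho\le\operatorname{diam}(E)$. Using the ball identity above, $\mu(B_{\mathbf{d}_{H}}(a,r))\le \mathsf{c}\,r^{\beta/H}$ for $r\le\operatorname{diam}_{\mathbf{d}_H}(E)$, so $E$ satisfies hypothesis \eqref{estim frostman} of Proposition \ref{1st prop mastrand} in the metric $\mathbf{d}_{H}$ with $\gamma=\beta/H$. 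Part $(i)$ of that proposition (with $\alpha=d$ and $\rho_2=\|\cdot\|$) then gives $\mathcal{C}^{\,d-\beta/H}(F)\le \mathbf{\mathfrak{c}}_{2}\,\mathcal{C}_{\rho_{\mathbf{d}_{H}}}^{d}(E\times F)$, where $d-\beta/H>0$ because $\beta<Hd$. Chaining with the lower bound in \eqref{upper-lower bounds 1} yields $\mathbb{P}\{(B^H+f)(E)\cap F\neq\emptyset\}\ge (c\,\mathbf{\mathfrak{c}}_{2})^{-1}\mathcal{C}^{\,d-\beta/H}(F)$.

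For the upper bound I would fix $\beta'$ with $\overline{\dim}_M(E)<\beta'<Hd$. By the characterisation \eqref{upper Minkowski} there is a constant $C$ with $N(E,\varrho)\le C\,\varrho^{-\beta'}$ for all $\varrho$, hence $N_{\mathbf{d}_{H}}(E,r)=N(E,r^{1/H})\le C\,r^{-\beta'/H}$; this is hypothesis \eqref{estim cover number 0} with $\gamma'=\beta'/H$, and $\beta'<Hd$ ensures $\gamma'<d=\alpha$ as required. Part $(ii)$ of Proposition \ref{1st prop mastrand} then gives $\mathcal{H}_{\rho_{\mathbf{d}_{H}}}^{d}(E\times F)\le \mathbf{\mathfrak{c}}_{4}\,\mathcal{H}^{\,d-\beta'/H}(F)$, and the upper bound in \eqref{upper-lower bounds 1} produces $\mathbb{P}\{(B^H+f)(E)\cap F\neq\emptyset\}\le c\,\mathbf{\mathfrak{c}}_{4}\,\mathcal{H}^{\,d-\beta'/H}(F)$. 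Taking $\mathtt{c}_1=\max\{c\,\mathbf{\mathfrak{c}}_{2},\ c\,\mathbf{\mathfrak{c}}_{4}\}$ completes both bounds, and the dependence of $\mathtt{c}_1$ on $E,F,H,\mathrm{K}_f,\beta,\beta'$ only follows because the constants of Theorem \ref{main theorem hitting} and Proposition \ref{1st prop mastrand} depend on no more than that.

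I do not expect a genuinely hard step: the proposition is essentially a transcription of Proposition \ref{1st prop mastrand} through the snowflake rescaling, with Proposition \ref{2nd prop mastrand} supplying the underlying dimension condition (indeed the whole argument is Proposition \ref{2nd prop mastrand} read in the metric $\mathbf{d}_{H}$, using $\dim_{\mathbf{d}_H}(E)=\dim(E)/H$ and $\overline{\dim}_{\mathbf{d}_H,M}(E)=\overline{\dim}_M(E)/H$). The only points needing care are bookkeeping ones: verifying that the rescaled orders $\gamma=\beta/H$ and $\gamma'=\beta'/H$ fall in the admissible ranges ($\gamma'<\alpha$, forced by $\beta'<Hd$, and $\alpha-\gamma>0$), that compactness of $E$ and $F$ makes both factor metric spaces bounded so the cited estimates apply on radii up to the diameter, and that the strict inequalities $\beta<\dim(E)\le\overline{\dim}_M(E)<\beta'$ are precisely what allow Frostman's theorem and the Minkowski characterisation to fire at the two separate exponents.
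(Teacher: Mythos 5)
Your proof is correct and follows exactly the paper's own route: identify $\rho_{\mathbf{d}_H}$ as the product metric $\rho_3$ built from the snowflaked metric $\rho_1=\mathbf{d}_H$ and the Euclidean $\rho_2$, note $\dim_{\mathbf{d}_H}(E)=\dim(E)/H$ and $\overline{\dim}_{\mathbf{d}_H,M}(E)=\overline{\dim}_M(E)/H$, and chain Theorem \ref{main theorem hitting} with Proposition \ref{1st prop mastrand} via Proposition \ref{2nd prop mastrand}. Your unpacking of the Frostman and covering-number verifications is just the content of Proposition \ref{2nd prop mastrand} written out explicitly, so there is no substantive difference from the paper's argument.
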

\begin{proof}
We take $X_1=[0,1]$, $\rho_1(s,t):=|t-s|^H$ and $X_2=\mathbb{R}^d$, $\rho_2(x,y)=\|x-y\|$. First, let us note that $\dim_{\rho_1}(E)=\dim(E)/H$ and $\overline{\dim}_{\rho_1,M}(E)=\overline{\dim}_M(E)/H$. Then \eqref{lower bound 2} follows from Theorem \ref{main theorem hitting}, \eqref{lwr cap estim} and \eqref{uppr Hausdorff estim} of Proposition \ref{1st prop mastrand}, and Proposition \ref{2nd prop mastrand}.
\end{proof}

	As we will see, the above hitting probabilities estimates become more accurate when $E$ is $\beta$-Ahlfors-David regular sets.
	\begin{proposition}\label{corollary hitting 2}
		Let $B^H$, $f$, $E$ and $F$ as in Theorem \ref{main theorem hitting} with ${E\subset \left(\mathbf{I},|.|  \right)} $ is a $\beta$-Ahlfors-David regular for some $\beta\in (0,1]$. Then there is a positive constant $\mathtt{c}_{2}$ which depends on $E$, $F$, $H$, $K_f$ and $\beta$, such that 
		\begin{equation}
		\mathtt{c}_{2}^{-1}\, \mathcal{C}^{d-\beta/H}(F)\leq \mathbb{P}\{(B^H+f)(E)\cap F\neq \emptyset \}\leq \mathtt{c}_{2}\,\mathcal{H}^{d-\beta/H}(F).\label{upper-lower bound 2}
		\end{equation}
	\end{proposition}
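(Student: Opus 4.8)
The plan is to read off the sharp two-sided bound \eqref{upper-lower bound 2} from the product-set estimate of Theorem \ref{main theorem hitting} by feeding it into Proposition \ref{3rd prop mastrand}. The reason a single exponent $d-\beta/H$ is allowed to appear on \emph{both} sides (which is what makes the estimate sharper than Proposition \ref{corollary hitting}) is a change-of-metric observation: although $E$ is $\beta$-Ahlfors--David regular for the Euclidean metric $|.|$, the metric that actually governs the time axis of $B^{H}$ is the canonical metric $\mathbf{d}_{H}(s,t)=|t-s|^{H}$, and with respect to \emph{this} metric $E$ becomes $(\beta/H)$-Ahlfors--David regular.

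First I would verify this transfer of regularity. Let $\mu$ be the measure witnessing \eqref{condition S} for $E$ in $(\mathbf{I},|.|)$ with exponent $\beta$. Since a $\mathbf{d}_{H}$-ball satisfies $B_{\mathbf{d}_{H}}(a,r)=\{t:|t-a|^{H}<r\}=B_{|.|}(a,r^{1/H})$ and $r^{1/H}\le r\le 1$ for $r\in(0,1]$ (because $1/H>1$), the two bounds in \eqref{condition S} give
\[
\mathbf{\mathfrak{c}}_{\beta}^{-1}\,r^{\beta/H}\le \mu\big(B_{\mathbf{d}_{H}}(a,r)\big)\le \mathbf{\mathfrak{c}}_{\beta}\,r^{\beta/H}\qquad\text{for all }a\in E,\ 0<r\le1,
\]
so that $E$ is $(\beta/H)$-Ahlfors--David regular in $(\mathbf{I},\mathbf{d}_{H})$.

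With this in hand I would apply Proposition \ref{3rd prop mastrand} with $\mathscr{X}_{1}=\mathbf{I}$, $\rho_{1}=\mathbf{d}_{H}$, $\mathscr{X}_{2}=\mathbb{R}^{d}$, $\rho_{2}=\|\cdot\|$, $G_{1}=E$, $G_{2}=F$, $\alpha=d$ and $\gamma_{1}=\beta/H$, so that $\rho_{3}=\rho_{\mathbf{d}_{H}}$. In the main regime $\beta<Hd$ (equivalently $\gamma_{1}<\alpha$) both \eqref{ineq cap 1} and \eqref{ineq haus 1} hold and yield $\mathcal{C}^{d-\beta/H}(F)\le \mathbf{\mathfrak{c}}_{7}\,\mathcal{C}_{\rho_{\mathbf{d}_{H}}}^{d}(E\times F)$ and $\mathcal{H}_{\rho_{\mathbf{d}_{H}}}^{d}(E\times F)\le \mathbf{\mathfrak{c}}_{8}\,\mathcal{H}^{d-\beta/H}(F)$. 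Inserting these into the two-sided estimate \eqref{upper-lower bounds 1} of Theorem \ref{main theorem hitting} and setting $\mathtt{c}_{2}=c\,(\mathbf{\mathfrak{c}}_{7}\vee\mathbf{\mathfrak{c}}_{8})$ produces \eqref{upper-lower bound 2}.

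The step I expect to be the only genuinely delicate one is the treatment of the degenerate ranges of $\beta$, where Proposition \ref{3rd prop mastrand} does not apply verbatim. If $\beta=Hd$ then $d-\beta/H=0$: the upper bound is immediate since $\mathcal{H}^{0}(F)=1\ge\mathbb{P}$, while the lower bound still follows from \eqref{ineq cap 1} (whose hypothesis only asks $\gamma_{1}\le\alpha$) applied to the logarithmic capacity $\mathcal{C}^{0}(F)$. If $\beta>Hd$ then $d-\beta/H<0$, so both $\mathcal{C}^{d-\beta/H}(F)$ and $\mathcal{H}^{d-\beta/H}(F)$ equal $1$ by convention; the upper bound is again automatic, and for the lower bound I would note that restricting to a slice $E\times\{y_{0}\}$ with $y_{0}\in F$ is isometric to $(E,\mathbf{d}_{H})$, so by monotonicity $\mathcal{C}_{\rho_{\mathbf{d}_{H}}}^{d}(E\times F)\ge\mathcal{C}_{\mathbf{d}_{H}}^{d}(E)$, and the latter is positive because the $(\beta/H)$-regular measure $\mu$ has finite $d$-energy in $\mathbf{d}_{H}$ (Lemma \ref{estim Frostman lemma} with $\theta=d<\kappa=\beta/H$). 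The lower bound of \eqref{upper-lower bounds 1} then bounds $\mathbb{P}$ below by a positive constant depending on $E,F,H$. Taking $\mathtt{c}_{2}$ to be the largest of the constants occurring across the three cases completes the argument.
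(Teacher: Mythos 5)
Your proof is correct and follows essentially the same route as the paper's: transfer the $\beta$-Ahlfors--David regularity of $E$ from the Euclidean metric to $(\beta/H)$-regularity in the canonical metric $\mathbf{d}_{H}$, then feed Proposition \ref{3rd prop mastrand} (with $\alpha=d$, $\gamma_{1}=\beta/H$) into Theorem \ref{main theorem hitting}, with the same case split $\beta<Hd$, $\beta=Hd$, $\beta>Hd$. The only difference is one of completeness: where the paper dismisses the lower bound in the case $\beta>Hd$ as requiring no proof, you supply an explicit and valid argument via the slice bound $\mathcal{C}_{\rho_{\mathbf{d}_{H}}}^{d}(E\times F)\geq \mathcal{C}_{\mathbf{d}_{H}}^{d}(E)>0$, which is a legitimate way (alongside invoking Proposition \ref{1st prop mastrand}(i), whose hypothesis places no constraint on $\gamma$ versus $\alpha$) of justifying that step.
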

\begin{proof}
Three cases are to be discussed here: (j) $\beta <Hd$, (jj) $\beta =Hd$ and (jjj) $\beta >Hd$. Let us point out first that for the lower bound, the interesting cases are (j) and (jj) while for the upper bound it is the case (j) which requires proof. Using Proposition \ref{3rd prop mastrand} with $(X_1,\rho_1)$ and $(X_2,\rho_2)$, are taken as in the proof of Proposition \ref{corollary hitting}, and $\alpha=d$ we get the first case (j). For the case (jj) we only use Proposition \ref{3rd prop mastrand} $(i)$.
\end{proof}
\noindent Following the same pattern as above we get the following corollary. 
	\begin{corollary}\label{corollary hitting CX12}
Let $B^H$, $E$ and $F$ as in Proposition \ref{corollary hitting 2}. Then there is a positive constant $\mathtt{c}_{3}$ which depends on $E$, $F$, $H$ and $\beta$, such that 
\begin{equation}\label{upper-lower boundCX12}
\mathtt{c}_{3}^{-1}\, \mathcal{C}^{d-\beta/H}(F)\leq \mathbb{P}\{B^H(E)\cap F\neq \emptyset \}\leq 		\mathtt{c}_{3}\,\mathcal{H}^{d-\beta/H}(F).
\end{equation}
\end{corollary}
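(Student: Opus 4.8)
The plan is to read off \eqref{upper-lower boundCX12} as the drift-free instance of Proposition \ref{corollary hitting 2}, rerunning its proof with $f\equiv 0$. The null function lies in $C^{H}(\mathbf{I})$ with vanishing Hölder constant, so Theorem \ref{main theorem hitting} applies verbatim to $B^{H}$ itself and delivers the product-set bound
\begin{equation*}
c^{-1}\,\mathcal{C}_{\rho_{\mathbf{d}_{H}}}^{d}(E\times F)\leq \mathbb{P}\{B^{H}(E)\cap F\neq\emptyset\}\leq c\,\mathcal{H}_{\rho_{\mathbf{d}_{H}}}^{d}(E\times F),
\end{equation*}
the constant now being independent of any drift. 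What remains is to turn the parabolic capacity and Hausdorff measure of $E\times F$ into the Euclidean capacity and measure of $F$.

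First I would rescale the Ahlfors-David exponent. Taking $(\mathscr{X}_1,\rho_1)=(\mathbf{I},\mathbf{d}_{H})$ and $(\mathscr{X}_2,\rho_2)=(\mathbb{R}^{d},\|\cdot\|)$, so that $\rho_3=\rho_{\mathbf{d}_{H}}$, one notes that $B_{\rho_1}(a,r)=B_{|\cdot|}(a,r^{1/H})$. Hence the measure witnessing the $\beta$-Ahlfors-David regularity of $E$ in the Euclidean metric obeys $\mu(B_{\rho_1}(a,r))\asymp r^{\beta/H}$, i.e.\ $E$ is $(\beta/H)$-Ahlfors-David regular for $\rho_1$ and $\dim_{\rho_1}(E)=\beta/H$. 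Invoking Proposition \ref{3rd prop mastrand} with $G_1=E$, $\gamma_1=\beta/H$ and $\alpha=d$, the capacity estimate \eqref{ineq cap 1} yields $\mathcal{C}^{d-\beta/H}(F)\leq \mathbf{\mathfrak{c}}_{7}\,\mathcal{C}_{\rho_{\mathbf{d}_{H}}}^{d}(E\times F)$ (valid whenever $\beta/H\leq d$) and the Hausdorff estimate \eqref{ineq haus 1} yields $\mathcal{H}_{\rho_{\mathbf{d}_{H}}}^{d}(E\times F)\leq \mathbf{\mathfrak{c}}_{8}\,\mathcal{H}^{d-\beta/H}(F)$ (valid whenever $\beta/H<d$). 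Chaining these with the display above gives \eqref{upper-lower boundCX12}.

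The leftover bookkeeping is the same three-case split as in Proposition \ref{corollary hitting 2}, dictated by the admissible range of $\gamma_1$. For $\beta<Hd$ both one-sided bounds follow at once from \eqref{ineq cap 1}--\eqref{ineq haus 1}. For $\beta\geq Hd$ the upper bound is vacuous, as $d-\beta/H\leq 0$ forces $\mathcal{H}^{d-\beta/H}(F)=1$ by convention while $\mathbb{P}\{B^{H}(E)\cap F\neq\emptyset\}\leq 1$. For the lower bound, the borderline $\beta=Hd$ is still covered by part (i) of Proposition \ref{3rd prop mastrand}, whose hypothesis is merely $\gamma_1\leq\alpha$; and $\beta>Hd$ is trivial, since then $\mathcal{C}^{d-\beta/H}(F)=1$ (negative order) and $\mathbb{P}\{B^{H}(E)\cap F\neq\emptyset\}$ is a fixed positive number --- positivity following from $\mathcal{C}_{\rho_{\mathbf{d}_{H}}}^{d}(E\times F)\geq\mathcal{C}_{\rho_1}^{d}(E)>0$ because $\dim_{\rho_1}(E)=\beta/H>d$ --- so an $F$-dependent choice of $\mathtt{c}_3$ absorbs it. I anticipate no genuine difficulty here; the only points needing care are the exponent rescaling $\beta\mapsto\beta/H$ and the matching of the strict versus non-strict thresholds in the two halves of Proposition \ref{3rd prop mastrand} with the degenerate endpoints.
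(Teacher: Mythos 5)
Your proof is correct and takes essentially the same route as the paper: specialize Theorem \ref{main theorem hitting} to the zero drift $f\equiv 0$, transfer the $\beta$-Ahlfors--David regularity of $E$ in the Euclidean metric to $(\beta/H)$-regularity in the metric $\mathbf{d}_{H}$, and then invoke Proposition \ref{3rd prop mastrand} with $\alpha=d$, using the same three-case split according to $\beta<Hd$, $\beta=Hd$, $\beta>Hd$. Your handling of the degenerate endpoints (in particular the positivity argument $\mathcal{C}_{\rho_{\mathbf{d}_{H}}}^{d}(E\times F)\geq \mathcal{C}_{\mathbf{d}_{H}}^{d}(E)>0$ when $\beta>Hd$) is in fact more explicit than the paper's one-line appeal to ``the same pattern.''
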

\begin{remark}
We note that \cite[Corollary 2.2]{Chen Xiao 2012} corresponds to the particular case $E=\mathbf{I}$ for which $\beta=1$.
	\end{remark}
	
\subsection{Hitting probabilities estimates when $F$ is $\gamma$-Ahlfors-David regular}
	Now we get parallel results to those given in Proposition \eqref{corollary hitting} and \eqref{corollary hitting 2}, emphasizing regularity properties of $F$ instead of $E$. First, we have the following result.
	\begin{proposition}\label{cor hitting 3}
	    Let $B^H$, $f$, $E$ and $F$ as in Theorem \ref{main theorem hitting}, such that $0<\dim(F)\leq \overline{\dim}_M(F)<d$. Then for all $\gamma$ and $\gamma'$ such that $0<\gamma<\dim(F)\leq \overline{\dim}_M(F)<\gamma'<d$, we have
\begin{equation}
\mathtt{c}_{4}^{-1}\, \mathcal{C}^{H(d-\gamma)}(E)\leq \mathbb{P}\{(B^H+f)(E)\cap F\neq \emptyset \}\leq \mathtt{c}_{4}\,\mathcal{H}^{H(d-\gamma')}(E),\label{upper-lower bound 2}
\end{equation}
where $\mathtt{c}_{4}$ is a positive constant which depends on $E$, $F$, $H$, $\mathrm{K}_f$, $\gamma$ and $\gamma'$.
\begin{proof}
Once again taking $(X_1,\rho_1)$ and $(X_2,\rho_2)$ as in the proof of Proposition \ref{corollary hitting} and $\alpha=d$, it is easy to see that $\mathcal{C}_{\rho_1}^{d-\gamma}(\cdot)\equiv \mathcal{C}^{H(d-\gamma)}(\cdot)$ and $\mathcal{H}_{\rho_1}^{d-\gamma'}(\cdot) \equiv \mathcal{H}^{H(d-\gamma')}(\cdot)$. Hence \eqref{upper-lower bound 2} is a consequence of Theorem \ref{main theorem hitting}, \eqref{lwr cap estim 1} and \eqref{uppr Hausdorff estim 1} of Proposition \ref{1st prop mastrand}, and Proposition \ref{2nd prop mastrand}.
	\end{proof}
	\end{proposition}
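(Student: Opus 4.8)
The plan is to obtain the statement as a direct corollary of the master estimate in Theorem \ref{main theorem hitting} by applying the ``$G_2$-version'' of the product comparison inequalities in Proposition \ref{1st prop mastrand}, this time placing the regularity hypotheses on $F$ rather than on $E$. Concretely, I would keep the setup of the proof of Proposition \ref{corollary hitting}, namely $(\mathscr{X}_1,\rho_1)=([0,1],|t-s|^H)$ and $(\mathscr{X}_2,\rho_2)=(\mathbb{R}^d,\|x-y\|)$, take $G_1=E$ and $G_2=F$ so that $\rho_3=\rho_{\mathbf{d}_H}$ is the parabolic metric, and fix $\alpha=d$.

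First I would check that $F$ meets the hypotheses needed to invoke the inequalities \eqref{lwr cap estim 1} and \eqref{uppr Hausdorff estim 1}. Since $F\subset\mathbb{R}^d$ carries the Euclidean metric and $0<\gamma<\dim(F)\le\overline{\dim}_M(F)<\gamma'<d$, the $G_2$-analogue of Proposition \ref{2nd prop mastrand} (with $\alpha=d$) supplies both a Frostman measure on $F$ of exponent $\gamma$ (condition $(i)$) and the covering-number bound of exponent $\gamma'$ (condition $(ii)$). Feeding these into Proposition \ref{1st prop mastrand} yields
\begin{equation*}
\mathcal{C}_{\rho_1}^{d-\gamma}(E)\le \mathbf{\mathfrak{c}}_5\,\mathcal{C}_{\rho_{\mathbf{d}_H}}^{d}(E\times F)
\qquad\text{and}\qquad
\mathcal{H}_{\rho_{\mathbf{d}_H}}^{d}(E\times F)\le \mathbf{\mathfrak{c}}_6\,\mathcal{H}_{\rho_1}^{d-\gamma'}(E).
\end{equation*}
The genuinely delicate step, which is the content hidden behind the author's ``it is easy to see'', is translating the $\rho_1$-quantities back into Euclidean ones. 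Because a $\rho_1$-ball of radius $r$ about $t$ is the Euclidean interval of half-length $r^{1/H}$, rewriting the defining covering sums gives $\mathcal{H}_{\rho_1}^{\beta}(\cdot)\asymp\mathcal{H}^{H\beta}(\cdot)$ for $\beta>0$; and since $\varphi_{\beta}(|t-s|^H)=|t-s|^{-H\beta}=\varphi_{H\beta}(|t-s|)$ the energy integrals coincide, so $\mathcal{C}_{\rho_1}^{\beta}(\cdot)=\mathcal{C}^{H\beta}(\cdot)$. Taking $\beta=d-\gamma$ and $\beta=d-\gamma'$ (both positive, so no degenerate $\varphi_\alpha$ case arises) identifies $\mathcal{C}_{\rho_1}^{d-\gamma}(E)=\mathcal{C}^{H(d-\gamma)}(E)$ and $\mathcal{H}_{\rho_1}^{d-\gamma'}(E)\asymp\mathcal{H}^{H(d-\gamma')}(E)$.

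Finally I would chain these with Theorem \ref{main theorem hitting}: its lower bound $\mathcal{C}_{\rho_{\mathbf{d}_H}}^{d}(E\times F)\le c\,\mathbb{P}\{(B^H+f)(E)\cap F\neq\emptyset\}$ combined with the capacity inequality above produces the left-hand estimate, while its upper bound together with the Hausdorff inequality produces the right-hand one. Absorbing the constants $c$, $\mathbf{\mathfrak{c}}_5$, $\mathbf{\mathfrak{c}}_6$ and the comparison factor $2^{\beta(1-H)}$ into a single $\mathtt{c}_4$ (depending on $E,F,H,\mathrm{K}_f,\gamma,\gamma'$) completes the proof. The only nonroutine point is the exponent rescaling by $H$; everything else is a direct appeal to the already-established product comparison and hitting-probability results.
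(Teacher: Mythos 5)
Your proposal is correct and follows essentially the same route as the paper's own proof: the same choice of $(\mathscr{X}_1,\rho_1)=([0,1],|t-s|^H)$, $(\mathscr{X}_2,\rho_2)=(\mathbb{R}^d,\lVert x-y\rVert)$ with $\alpha=d$, the same appeal to the $G_2$-versions \eqref{lwr cap estim 1} and \eqref{uppr Hausdorff estim 1} of Proposition \ref{1st prop mastrand} together with Proposition \ref{2nd prop mastrand}, and the same chaining with Theorem \ref{main theorem hitting}. The only difference is that you spell out the exponent-rescaling identifications $\mathcal{C}_{\rho_1}^{\beta}(\cdot)=\mathcal{C}^{H\beta}(\cdot)$ and $\mathcal{H}_{\rho_1}^{\beta}(\cdot)\asymp\mathcal{H}^{H\beta}(\cdot)$, which the paper dismisses as ``easy to see''.
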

	
When ${F\subset \left( \mathbb{R}^d,\lVert \cdot \rVert\right) } $ is a $\gamma$-Ahlfors-David regular, we have the following result, which could be proven similarly to Proposition \ref{corollary hitting 2} by making use of \eqref{lwr cap estim 2} and \eqref{uppr Hausdorff estim 2}. 

\begin{proposition}\label{cor hitting 4}
Let $B^H$, $f$, $E$ and $F$ as in Theorem \ref{main theorem hitting}, such that $F$ is a $\gamma$-Ahlfors-David regular compact subset of $[-M,M]^d$ for some $\gamma \in (0,d]$. Then there is a positive constant $\mathtt{c}_{5}$ which depends on $E$, $F$, $H$, $\mathrm{K}_f$ and $\gamma$ only, such that 
\begin{equation}
\mathtt{c}_{5}^{-1}\, \mathcal{C}^{H(d-\gamma)}(E)\leq \mathbb{P}\{(B^H+f)(E)\cap F\neq \emptyset \}\leq \mathtt{c}_{5}\,\mathcal{H}^{H(d-\gamma)}(E).\label{hitting F ahlfors}
\end{equation}
\end{proposition}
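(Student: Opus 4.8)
The plan is to mimic the proof of Proposition \ref{corollary hitting 2}, but with the roles of $E$ and $F$ interchanged so that the Ahlfors--David regular factor is now $F$ rather than $E$. Concretely, I would take the two metric spaces $(X_1,\rho_1)=\left(\mathbf{I},|t-s|^H\right)$ and $(X_2,\rho_2)=\left(\mathbb{R}^d,\|\cdot\|\right)$, so that $\rho_3=\rho_{\mathbf{d}_{H}}$ is the parabolic metric and the product set is $E\times F$. With this identification $G_1=E$, $G_2=F$ is $\gamma$-Ahlfors--David regular (i.e. $\gamma_2=\gamma$), and I set $\alpha=d$. As already recorded in the proof of Proposition \ref{cor hitting 3}, the scaling $\rho_1(s,t)=|t-s|^H$ yields the identifications $\mathcal{C}_{\rho_1}^{d-\gamma}(\cdot)\equiv \mathcal{C}^{H(d-\gamma)}(\cdot)$ and $\mathcal{H}_{\rho_1}^{d-\gamma}(\cdot)\equiv \mathcal{H}^{H(d-\gamma)}(\cdot)$; this is what converts the product-metric estimates into the Euclidean ones appearing in \eqref{hitting F ahlfors}.

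Next I would invoke Proposition \ref{3rd prop mastrand} in its ``$G_2$ regular'' form, namely the estimates \eqref{lwr cap estim 2} and \eqref{uppr Hausdorff estim 2}, and combine them with Theorem \ref{main theorem hitting}. Chaining the inequalities gives, on the lower side,
\[
\mathbb{P}\{(B^H+f)(E)\cap F\neq\emptyset\}\ \ge\ c^{-1}\mathcal{C}_{\rho_{\mathbf{d}_{H}}}^{d}(E\times F)\ \ge\ c^{-1}\mathbf{\mathfrak{c}}_{9}^{-1}\,\mathcal{C}_{\rho_1}^{d-\gamma}(E)=c^{-1}\mathbf{\mathfrak{c}}_{9}^{-1}\,\mathcal{C}^{H(d-\gamma)}(E),
\]
and, on the upper side,
\[
\mathbb{P}\{(B^H+f)(E)\cap F\neq\emptyset\}\ \le\ c\,\mathcal{H}_{\rho_{\mathbf{d}_{H}}}^{d}(E\times F)\ \le\ c\,\mathbf{\mathfrak{c}}_{10}\,\mathcal{H}_{\rho_1}^{d-\gamma}(E)=c\,\mathbf{\mathfrak{c}}_{10}\,\mathcal{H}^{H(d-\gamma)}(E),
\]
from which \eqref{hitting F ahlfors} follows with $\mathtt{c}_{5}=\max\{c\,\mathbf{\mathfrak{c}}_{10},\,c\,\mathbf{\mathfrak{c}}_{9}\}$, a constant depending only on $E$, $F$, $H$, $\mathrm{K}_f$ and $\gamma$ (the dependence on $\mathrm{K}_f$ entering through the constant $c$ of Theorem \ref{main theorem hitting}).

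The one point requiring care is the hypothesis on $\alpha$ and $\gamma_2$ in Proposition \ref{3rd prop mastrand}: the capacity bound \eqref{lwr cap estim 2} needs only $\gamma\le\alpha=d$, which holds for every $\gamma\in(0,d]$, whereas the Hausdorff bound \eqref{uppr Hausdorff estim 2} requires the \emph{strict} inequality $\gamma<\alpha=d$. Hence, exactly as in Proposition \ref{corollary hitting 2}, I would split into the case $\gamma<d$, where both chains above run verbatim, and the boundary case $\gamma=d$ (the case $\gamma>d$ cannot occur since $F\subset\mathbb{R}^d$). In the boundary case $d-\gamma=0$, so the upper bound reads $\mathcal{H}^{0}(E)=1$ and is merely the trivial estimate $\mathbb{P}\{\cdots\}\le 1$, while the lower bound still follows from part $(i)$ of Proposition \ref{3rd prop mastrand} and now features the logarithmic capacity $\mathcal{C}^{0}(E)$, produced by the $\theta=\kappa$ branch of Lemma \ref{estim Frostman lemma}. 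This boundary bookkeeping, together with verifying that the identification $\mathcal{C}_{\rho_1}^{0}\asymp\mathcal{C}^{0}$ persists in the logarithmic regime, is the only genuine obstacle; everything else is a direct substitution into the already-established product estimates.
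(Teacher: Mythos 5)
Your proposal is correct and follows exactly the route the paper intends: the paper itself gives no detailed argument, merely noting that the result ``could be proven similarly to Proposition \ref{corollary hitting 2} by making use of \eqref{lwr cap estim 2} and \eqref{uppr Hausdorff estim 2}'', which is precisely your chaining of Theorem \ref{main theorem hitting} with the $G_2$-regular form of Proposition \ref{3rd prop mastrand} under the identifications $\mathcal{C}_{\rho_1}^{d-\gamma}\equiv\mathcal{C}^{H(d-\gamma)}$, $\mathcal{H}_{\rho_1}^{d-\gamma}\equiv\mathcal{H}^{H(d-\gamma)}$. Your explicit treatment of the boundary case $\gamma=d$ (trivial upper bound via the convention $\mathcal{H}^{0}\equiv 1$, lower bound from part $(i)$ only, with the logarithmic-kernel comparability check) matches the case discussion the paper carries out in Proposition \ref{corollary hitting 2} and is, if anything, more detailed than the paper's own text.
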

	
\begin{corollary}
Let $B^H$, $E$ and $F$ as in Proposition \ref{cor hitting 4}. Then there is a positive and constant $\mathtt{c}_{6}$ which depends on $E$, $F$, $H$ and $\gamma$, such that 
\begin{equation}\label{upper-lower boundCX12}
\mathtt{c}_{6}^{-1}\, \mathcal{C}^{H(d-\gamma)}(E)\leq \mathbb{P}\{B^H(E)\cap F\neq \emptyset \}\leq 		\mathtt{c}_{6}\,\mathcal{H}^{H(d-\gamma)}(E).
\end{equation}
\end{corollary}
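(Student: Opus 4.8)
The plan is to deduce this corollary directly from Proposition \ref{cor hitting 4} by specializing the drift to $f\equiv 0$. First I would observe that the constant function $f\equiv 0$ belongs to $C^H(\mathbf{I})$ trivially, with Hölder constant $\mathrm{K}_f=0$, so all the hypotheses of Proposition \ref{cor hitting 4} are met: $B^H$ is the $d$-dimensional fractional Brownian motion of the statement, $E\subset\mathbf{I}$ and $F\subset[-M,M]^d$ are as required, and $F$ is $\gamma$-Ahlfors-David regular for the same $\gamma\in(0,d]$.

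Substituting $f\equiv 0$ into \eqref{hitting F ahlfors} replaces the process $B^H+f$ by $B^H$ and leaves the orders $H(d-\gamma)$ of the capacity and of the Hausdorff measure unchanged, so the two-sided bound
\[
\mathtt{c}_5^{-1}\,\mathcal{C}^{H(d-\gamma)}(E)\leq \mathbb{P}\{B^H(E)\cap F\neq\emptyset\}\leq \mathtt{c}_5\,\mathcal{H}^{H(d-\gamma)}(E)
\]
follows at once. It then only remains to track the dependence of the constant: in Proposition \ref{cor hitting 4} the constant $\mathtt{c}_5$ is allowed to depend on $E$, $F$, $H$, $\mathrm{K}_f$ and $\gamma$, but here $\mathrm{K}_f=0$, so no free parameter coming from the drift survives. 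Setting $\mathtt{c}_6:=\mathtt{c}_5$ thus yields a constant depending only on $E$, $F$, $H$ and $\gamma$, which is exactly the claim. This mirrors the reduction by which Corollary \ref{corollary hitting CX12} was obtained from Proposition \ref{corollary hitting 2}.

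Since every ingredient is already contained in Proposition \ref{cor hitting 4}, I do not expect any genuine obstacle; the only point requiring care is the bookkeeping of the constant's dependence, namely checking that each place where $\mathrm{K}_f$ entered the estimate (through the Hölder norm of $f$ in the upper and lower bounds inherited from Theorem \ref{main theorem hitting}) indeed collapses to a harmless absolute constant once $f\equiv 0$.
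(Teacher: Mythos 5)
Your proposal is correct and is essentially the paper's own (implicit) argument: the corollary is obtained by specializing Proposition \ref{cor hitting 4} to the zero drift, exactly as Corollary \ref{corollary hitting CX12} is obtained from Proposition \ref{corollary hitting 2} ("following the same pattern"). The observation that the dependence on $\mathrm{K}_f$ disappears when $f\equiv 0$, so the constant depends only on $E$, $F$, $H$ and $\gamma$, is the only point needing care and you have handled it correctly.
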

    
    \section{Sharpness of the Hölder regularity of the drift:}

	This subsection brings to light {the essential role} of the $H$-Hölder regularity assumed on the drift $f$ in the following sens: The result of Theorem \ref{main theorem hitting} fails to hold when the deterministic drift $f$ has a modulus of continuity $\mathsf{w}(\cdot)$ {satisfying $$r^H=o\left(\mathsf{w}(r)\right) \,\,\text{and}\,\, \mathsf{w}(r)=o\left(r^{H-\iota}\right) \,\,\text{when}\,\, r\rightarrow 0 \,\,\text{for all}\,\, \iota\in (0,H).$$ In this respect, we have to introduce some tools allowing us to reach our target.}

Let $\mathscr{L}$ be the class of all continuous functions $\mathsf{w}:[0, 1] \rightarrow(0, \infty)$, $\mathsf{w}(0)=0$, which are increasing on some interval $(0, r_0]$ with $r_0=r_0(\mathsf{w})\in (0,1)$. Let $\mathsf{w}\in \mathscr{L}$ be fixed. A continuous function $f$ is said to belong to the space $C^{\mathsf{w}}(\mathbf{I})$ if and only if
$$
\sup _{\substack{s, t \in \mathbf{I} \\ s \neq t}} \frac{|f(s)-f(t)|}{\mathsf{w}(|s-t|)}<\infty .
$$
It is obvious that the space $C^{\mathsf{w}}(\mathbf{I})$ is a Banach space with the norm
$$
\|f\|_{\mathsf{w}}=\sup_{s \in \mathbf{I}}|f(s)|+\sup _{\substack{s, t \in \mathbf{I} \\ s \neq t}} \frac{|f(s)-f(t)|}{\mathsf{w}(|s-t|)}.
$$
For $\alpha\in (0,1)$ and $\mathsf{w}(t)=t^\alpha$, $C^{\mathsf{w}}(\mathbf{I})$ is nothing but the usual space $C^{\alpha}(\mathbf{I})$.

Let $x_0\in(0,1]$ and $l:(0, x_0]\rightarrow \mathbb{R}_+$ be a slowly varying function at zero in the sens of Karamata (cf. \cite{Bingham et al}). It is well known that $l$ has the representation 
		\begin{align}\label{rep slow var}
        l(x)=\exp\left(\eta(x) + \int_{x}^{x_0}\frac{\varepsilon(t)}{t}dt\right),
    \end{align}
    where $\eta,\varepsilon: [0,x_0)\rightarrow \mathbb{R}$ are Borel measurable and bounded functions such that 
    $$ 
    \lim_{x\rightarrow 0}\eta(x)=\eta_0\in (0,\infty) \quad \text{ and  } \quad \lim_{x\rightarrow 0}\varepsilon(x)=0.
    $$
An interesting property of slowly varying functions which gives some intuitive meaning to the notion of "slow variation" is that for any $\tau>0$ we have 
    	\begin{equation}\label{slow var property}
    		x^{\tau}\, l(x) \rightarrow 0 \quad \text{ and } x^{-\tau}\,l(x)\rightarrow \infty \quad \text{ as } x\rightarrow 0.
    \end{equation}
It is known from Theorem 1.3.3 and Proposition 1.3.4 in \cite{Bingham et al} and the ensuing discussion that there exists {a function $\mathcal{C}^{\infty}$ near zero} $\tilde{l}:(0,x_0]\rightarrow \R_+$  such that  $l(x)\thicksim \tilde{l}(x)$ when $x\rightarrow 0$, and $\tilde{l}(\cdot)$ has the following form
    \begin{equation}\label{nice rep slow var}
        \tilde{l}(x)=\mathsf{c}\,\exp\left( \int_{x}^{x_0}\frac{\tilde{\varepsilon}(t)}{t}dt\right),
    \end{equation}
    for some positive constant $\mathsf{c}$ and {$\tilde{\varepsilon}(x) \rightarrow 0$}. Such function is called normalized slowly varying function (Kohlbecker 1958), and in this case 
    \begin{equation}\label{epsilon calcul}
     \tilde{\varepsilon}(x)=-x\, \tilde{\ell}^{\prime}(x)/\tilde{\ell}(x) \quad \text{for all }  \,x\in (0,x_0).
    \end{equation} 
    A function $\mathsf{v}_{\alpha,{\ell}}:[0, x_0]\rightarrow \mathbb{R}_+$ is called regularly varying function at zero with index {$\alpha \in (0,1)$} if and only if there exists a slowly varying function ${\ell}$, called {the slowly varying part of $\mathsf{v}_{\alpha,\ell}$}, such that
    \begin{align}\label{norm reg var function}
    \mathsf{v}_{\alpha,{\ell}}(0)=0 \quad \text{ and }  \quad \mathsf{v}_{\alpha,{\ell}}(x)=x^{\alpha}\,{\ell}(x),\quad x\in (0,x_0).
    \end{align}
   $\mathsf{v}_{\alpha,\ell}$ is called a normalized regularly varying if its slowly varying part is normalized slowly varying at zero.
   In the rest of this work, since the value of $x_0$ is unimportant because $\tilde{\ell}(x)$ and $\tilde{\varepsilon}(x)$ may be altered at will for $x\in (x_0,1]$, one can choose $x_0=1$ without loss of generality. Furthermore, we will only consider normalized {regularly/slowly} varying function.

Here are some interesting {properties} of normalized {regularly} varying functions. {Let $\mathsf{v}_{\alpha,\ell}$ be a normalized regularly varying at zero with normalized slowly varying part $l$. \begin{lemma}\label{req on slow var}
1. There exists small enough $x_1>0$ such that
$$
 \lim_{x\downarrow 0}\mathsf{v}_{\alpha,\ell}^{\prime}(x)=+\infty\quad \text{ and }\quad \mathsf{v}_{\alpha,\ell}(\cdot) \,\,\text{ is increasing on $(0,x_1]$}.
 $$  
    2. If in addition we assume that $l$ is a  $\mathcal{C}^{2}$ function such that 
   \begin{equation}\label{concav cond}
   	\liminf_{x\downarrow 0}x\,\varepsilon^{\prime}(x)=0,
   \end{equation}
     where $\varepsilon$ is given by \eqref{epsilon calcul}. Then there exists small enough $x_2>0$ such that $\mathsf{v}_{\alpha,\ell}$ is increasingly concave on $(0,x_2]$. Moreover for all $x_3\in (0,x_2)$ and all $\mathsf{c}>0$ small enough there exists $r_0 <x_3$ such that
         $$
        \mathsf{v}_{\alpha,\ell}(t)-\mathsf{v}_{\alpha,\ell}(s)\leq \textbf{{c}}\, \mathsf{v}_{\alpha,\ell}(t-s) \quad \text{ for all $s,t\in [x_3,x_2]$\,\, such that $0<t-s<r_0$}.
        $$       
    \end{lemma}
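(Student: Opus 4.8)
The plan is to reduce everything to the single logarithmic-derivative identity furnished by \eqref{epsilon calcul}. Writing $\mathsf{v}:=\mathsf{v}_{\alpha,\ell}$ and noting that \eqref{nice rep slow var} makes $\ell$ positive and (here) twice differentiable with $\ell'(x)=-\varepsilon(x)\ell(x)/x$, a direct differentiation of $\mathsf{v}(x)=x^{\alpha}\ell(x)$ gives
\[\mathsf{v}_{\alpha,\ell}'(x)=x^{\alpha-1}\ell(x)\bigl(\alpha-\varepsilon(x)\bigr).\]
The prefactor $x^{\alpha-1}\ell(x)$ is strictly positive, and since $\varepsilon(x)\to0$ the factor $\alpha-\varepsilon(x)$ stays above $\alpha/2>0$ near the origin, whence $\mathsf{v}'>0$ and assertion~1's monotonicity on some $(0,x_1]$. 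For the blow-up I would write $x^{\alpha-1}\ell(x)=x^{-(1-\alpha)}\ell(x)$ and invoke \eqref{slow var property} with $\tau=1-\alpha>0$ to get $x^{-(1-\alpha)}\ell(x)\to+\infty$; multiplying by $\alpha-\varepsilon(x)\to\alpha$ yields $\mathsf{v}'(x)\to+\infty$. This settles assertion~1 with no further work.

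For the concavity in assertion~2 I would differentiate once more (legitimate since $\ell\in C^{2}$ forces $\varepsilon\in C^{1}$), obtaining
\[\mathsf{v}_{\alpha,\ell}''(x)=x^{\alpha-2}\ell(x)\Bigl[\bigl(\alpha-1-\varepsilon(x)\bigr)\bigl(\alpha-\varepsilon(x)\bigr)-x\varepsilon'(x)\Bigr].\]
As $x\to0$ the product term tends to $\alpha(\alpha-1)=:-L<0$. The hypothesis \eqref{concav cond}, $\liminf_{x\downarrow0}x\varepsilon'(x)=0$, in particular yields $\liminf_{x\downarrow0}x\varepsilon'(x)\ge0$, so for each $\delta>0$ one has $-x\varepsilon'(x)<\delta$ near the origin; choosing $\delta=L/4$ keeps the bracket below $-L/4<0$ for small $x$. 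Since the prefactor $x^{\alpha-2}\ell(x)$ is positive, $\mathsf{v}''<0$ on some interval, and after shrinking $x_2\le x_1$ the function is increasingly concave on $(0,x_2]$.

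The only genuinely delicate point is the final increment inequality, and the plan is to combine the mean value theorem with slow variation. On the compact interval $[x_3,x_2]\subset(0,x_2]$ the derivative is bounded, say $M:=\max_{[x_3,x_2]}\mathsf{v}'<\infty$, so for $s,t\in[x_3,x_2]$ the mean value theorem gives $\mathsf{v}(t)-\mathsf{v}(s)=\mathsf{v}'(\xi)(t-s)\le M(t-s)$ with $\xi\in[x_3,x_2]$. I would then rewrite $M(t-s)=M\,\dfrac{(t-s)^{1-\alpha}}{\ell(t-s)}\,\mathsf{v}(t-s)$ and observe that, $1/\ell$ being again slowly varying, \eqref{slow var property} forces $u^{1-\alpha}/\ell(u)\to0$ as $u\downarrow0$. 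Hence for any prescribed $\mathbf{c}>0$ there is $r_0\in(0,x_3)$ with $M\,u^{1-\alpha}/\ell(u)<\mathbf{c}$ whenever $0<u<r_0$; taking $u=t-s$ (note $u<r_0<x_3\le s$, so $u$ lies in the concavity range) produces exactly $\mathsf{v}(t)-\mathsf{v}(s)\le\mathbf{c}\,\mathsf{v}(t-s)$.

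The heart of the matter, and the step I expect to be the main obstacle, is the correct interpretation of this last inequality: a short increment of $\mathsf{v}$ sitting \emph{away} from the origin is dwarfed by the value $\mathsf{v}(t-s)$ taken \emph{near} the origin, precisely because the slope of $\mathsf{v}$ explodes there (assertion~1). The slow-variation factor $u^{1-\alpha}/\ell(u)$ quantifies this blow-up and, being a genuine limit rather than a mere limsup, supplies the uniformity over $s,t\in[x_3,x_2]$ needed to pick a single threshold $r_0$. Note that the argument in fact delivers the conclusion for every $\mathbf{c}>0$, so the statement "for all $\mathbf{c}>0$ small enough" is comfortably met.
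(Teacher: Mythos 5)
Your proof is correct, and most of it coincides with the paper's: the formulas for $\mathsf{v}_{\alpha,\ell}'$ and $\mathsf{v}_{\alpha,\ell}''$, the use of \eqref{slow var property} for assertion~1, and the reading of \eqref{concav cond} as ``$-x\,\varepsilon'(x)<\delta$ near $0$'', which keeps the bracket in $\mathsf{v}_{\alpha,\ell}''$ negative, are exactly the paper's steps (the paper states the slightly stronger conclusion $\mathsf{v}_{\alpha,\ell}''(x)\to-\infty$, but by the same mechanism). The genuine difference is the final increment inequality. The paper exploits the concavity it has just established: by the mean value theorem and the monotonicity of $\mathsf{v}_{\alpha,\ell}'$ on $(0,x_2]$ it bounds
\[
\frac{\mathsf{v}_{\alpha,\ell}(t)-\mathsf{v}_{\alpha,\ell}(s)}{\mathsf{v}_{\alpha,\ell}(t-s)}\leq\frac{\mathsf{v}_{\alpha,\ell}'(x_3)}{\mathsf{v}_{\alpha,\ell}'(r)},
\]
and then chooses $r_0$ using $\lim_{r\downarrow0}\mathsf{v}_{\alpha,\ell}'(r)=+\infty$ from assertion~1 (the denominator bound rests on concavity through the origin, $\mathsf{v}_{\alpha,\ell}(u)\geq u\,\mathsf{v}_{\alpha,\ell}'(u)\geq u\,\mathsf{v}_{\alpha,\ell}'(r)$ for $u\leq r$). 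You bound the numerator the same way (mean value theorem, derivative bounded on $[x_3,x_2]$) but handle the denominator by slow variation instead of concavity, via $u/\mathsf{v}_{\alpha,\ell}(u)=u^{1-\alpha}/\ell(u)\to0$. Both arguments are sound and both deliver the inequality for every $\mathbf{c}>0$. Your route has the small advantage that the increment inequality is decoupled from concavity altogether: it uses neither $\ell\in\mathcal{C}^{2}$ nor \eqref{concav cond}, only differentiability of $\mathsf{v}_{\alpha,\ell}$ and \eqref{slow var property}, so it would survive even if the concavity hypothesis were dropped. The paper's route recycles the monotone, exploding derivative already in hand, so it needs no extra slow-variation computation and keeps the whole lemma inside a single framework.
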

    \begin{proof}
        1. It stemmed from  
        $$ 
        \mathsf{v}_{\alpha,\ell}^{\prime}(x)=x^{\alpha-1}\,\ell(x)\,\left(\alpha-\varepsilon(x)\right),
        $$ and \eqref{slow var property}.
    
    2.    It is easy to check that
        
$$
     \mathsf{v}_{\alpha,\ell}^{\prime\prime}(x)= x^{\alpha-2}\, \ell(x)\left[ \left(\alpha-1-\varepsilon(x)\right)\left(\alpha-\varepsilon(x)\right)-x\,\varepsilon^{\prime}(x)\right].
$$
\eqref{slow var property} and hypothesis \eqref{concav cond} ensure that $\lim_{x\downarrow 0}\mathsf{v}_{\alpha,\ell}^{\prime\prime}(x)=-\infty$.
Then there exists $x_2>0$ small enough such that $\mathsf{v}_{\alpha,\ell}^{\prime}(x)>0$ and  $\mathsf{v}_{\alpha,\ell}^{\prime\prime}(x)<0$ for all $x\in (0,x_2]$. Thus $\mathsf{v}_{\alpha,\ell}$ is increasingly concave on $(0,x_2]$.

For the rest, let $x_3 \in (0,x_2]$ and $\mathsf{c}>0$ be arbitrary. Let $s<t\in (x_3,x_2)$ and $r< x_3$, then using the monotonicity of $\mathsf{v}_{\alpha,\ell}^{\prime}$ we have for $0<t-s<r$ that
\begin{align}
\frac{\mathsf{v}_{\alpha,\ell}(t)-\mathsf{v}_{\alpha,\ell}(s)}{\mathsf{v}_{\alpha,\ell}(t-s)}
            \leq \frac{\mathsf{v}_{\alpha,\ell}^{\prime}(x_3)}{\mathsf{v}_{\alpha,\ell}^{\prime}(r)}.
        \end{align}
Since $\lim_{x\downarrow 0}\mathsf{v}_{\alpha,\ell}^{\prime}(x)=+\infty$, we can choose $r_0$ to be smallest $r$ guaranteeing that the term $\mathsf{v}_{\alpha,\ell}^{\prime}(x_3)/\mathsf{v}_{\alpha,\ell}^{\prime}(r)$ will be smaller than $\textbf{c}$. This finishes the proof.
        \end{proof}
\begin{remark}
As a consequence of the Lemma \ref{req on slow var}, for any normalized regularly varying at zero $\mathsf{v}_{\alpha,\ell}$ that checks the condition \eqref{concav cond},  $(s,t)\mapsto \mathsf{v}_{\alpha,\ell}(|t-s|)$ defines a metric on {$[a,x_2]^2$}. 
\end{remark}
}
Let $\ell$ be a normalized slowly varying function at zero. Now we consider the continuous function $\mathsf{w}_{H,l}$ given by
\begin{align}\label{modulo of our drift}
 \mathsf{w}_{H,\ell}(0)=0 \quad  \text{ and } \quad \mathsf{w}_{H,\ell}(x)= x^H\,\ell(x)\, \log^{1/2}(1/x), \quad x\in (0,1].
\end{align}
It is easy to see that $\ell(x)\, \log^{1/2}(1/x)$ stills a normalized slowly varying satisfying \eqref{nice rep slow var} with $\bar{\varepsilon}(\cdot)=\varepsilon(\cdot)-\log^{-1}(1/\cdot)/2$. {Hence assertion 1 of Lemma \ref{req on slow var}
 provides that $\mathsf{w}_{H,\ell}$ is increasing on some interval $(0,x_1]$ with $x_1\in (0,1)$}. Therefore $\mathsf{w}_{H,\ell}\in \mathscr{L}$.

\noindent  If we assume in addition that $\ell$ satisfies
\begin{equation}\label{cond modulus cont}
	\underset{x\rightarrow 0}{\limsup}\,\,
\ell(x)\log^{1/2}(1/x)=+\infty,
\end{equation}  
then the following inclusions hold 
\begin{align}\label{compar spaces}
C^{H}(\mathbf{I})\subsetneq C^{\mathsf{w}_{H,\ell}}(\mathbf{I})\subsetneq \underset{\tau>0}{\bigcap} C^{H-\tau}(\mathbf{I}).
\end{align}
{
Let $\theta_H$ be the normalized regularly varying function defined in \eqref{inv spectr density rep} with a normalized slowly varying part that satisfies
	\begin{equation}\label{cond on L_theta}
		\left\{ \begin{array}{l}
			\textbf{i.}\quad \underset{x\rightarrow+\infty}{\liminf}\,\,L_{\theta_{H}}(x)=0,\quad \underset{x\rightarrow+\infty}{\limsup}\,\,L_{\theta_{H}}(x)<+\infty\\
			\\
            \textbf{ii.}\,\,\, \underset{x\rightarrow +\infty}{\limsup} \,\,x\, \varepsilon^{\prime}_{\theta}(x)=0.
		\end{array}\right.
	\end{equation}
Here are some examples of normalized slowly varying functions for which the above conditions are satisfied
\[
 L_{\theta_{H}}(x)=\log^{-{\beta}}\left(x\right), \quad 
L_{\theta_{H}}(x)=\exp\left(-\log^{\alpha}\left(x\right)\right), \quad \alpha \in (0,1) {\text{ and } \beta>0}.
\]
In what follows, we will adopt the following notation}
\begin{align}\label{l_theta}
{ \ell_{\theta_H}(\cdot):=\mathsf{c}_H^{1/2}\,L^{-1/2}_{\theta_{H}}(1/\cdot)}.
\end{align}
Now let us give the main result of this section. 
\begin{theorem}\label{theo sharpness}
Let $\{B^H(t), t\in [0,1]\}$ be a $d$-dimensional fractional Brownian motion. Then there exist a function $f\in C^{\mathsf{w}_{H,{\ell_{\theta_H}}}}(\mathbf{I})\setminus C^{H}\left(\mathbf{I}\right)$, 
and compact sets $E\subset \mathbf{I}$ and $F\subset \mathbb{R}^d$ such that
	    \begin{equation}\label{contre exple}
	    \mathcal{C}_{\rho_{\mathbf{d}_{H}}}^{d}\left(E\times F\right)=\mathcal{H}_{\rho_{\mathbf{d}_{H}}}^d\left(E\times F\right)=0 \quad \text{ and } \quad \mathbb{P}\left\{(B^H+f)(E)\cap F\neq \varnothing \right\}>0.
	    \end{equation}
	    In other words \eqref{upper-lower bounds 1} fails to hold.
	\end{theorem}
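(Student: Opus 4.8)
The plan is to \emph{randomize the drift}. Let $Y=(Y_1,\dots,Y_d)$ be a $d$-dimensional centered Gaussian process whose coordinates are independent copies of $B^{\delta_{\theta_H}}_0$, taken independent of $B^H$, where $\delta_{\theta_H}(r)=r^H\ell_{\theta_H}(r)$ is the regularly varying function fixed above. Two features of this choice drive the argument. First, by the almost sure uniform modulus of continuity $\delta_{\theta_H}(r)\log^{1/2}(1/r)=\mathsf{w}_{H,\ell_{\theta_H}}(r)$ of such processes, together with the matching lower bound in that modulus and condition \eqref{cond modulus cont}, almost every path of $Y$ belongs to $C^{\mathsf{w}_{H,\ell_{\theta_H}}}(\mathbf{I})\setminus C^{H}(\mathbf{I})$ (cf. the inclusions \eqref{compar spaces}). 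Second, the sum $Z:=B^H+Y$ is again a centered Gaussian process; by independence and Fubini, writing $g(f):=\mathbb{P}_{B^H}\{(B^H+f)(E)\cap F\neq\varnothing\}$,
\[
\mathbb{P}\{Z(E)\cap F\neq\varnothing\}=\mathbb{E}_Y\big[g(Y)\big].
\]
Hence it suffices to exhibit compact sets $E\subset\mathbf{I}$ and $F\subset\mathbb{R}^d$ with $\mathcal{H}_{\rho_{\mathbf{d}_{H}}}^{d}(E\times F)=0$ and $\mathbb{P}\{Z(E)\cap F\neq\varnothing\}>0$: the second condition forces $g(Y)>0$ on an event of positive $\mathbb{P}_Y$-measure, and intersecting with the full-measure event $\{Y\in C^{\mathsf{w}_{H,\ell_{\theta_H}}}(\mathbf{I})\setminus C^{H}(\mathbf{I})\}$ then produces a single deterministic path $f$ with all the required properties. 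Moreover $\mathcal{H}_{\rho_{\mathbf{d}_{H}}}^{d}(E\times F)=0$ forces, by the energy method as in the Remark following Proposition \ref{1st prop mastrand}, that $\mathcal{C}_{\rho_{\mathbf{d}_{H}}}^{d}(E\times F)=0$ as well, so both quantities in \eqref{upper-lower bounds 1} vanish while the probability is positive.

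The increments of $Z$ are stationary with $\mathbb{E}(Z_0(t)-Z_0(s))^2=|t-s|^{2H}+\delta_{\theta_H}^2(|t-s|)=\delta_Z^2(|t-s|)$, where $\delta_Z(r):=r^H\big(1+\ell_{\theta_H}^2(r)\big)^{1/2}$ is regularly varying of index $H$ with slowly varying part $(1+\ell_{\theta_H}^2)^{1/2}$; thus $Z$ is itself a process of the type $B^{\delta_Z}$, with canonical metric $\mathbf{d}_{Z}(s,t)=\delta_Z(|t-s|)\geq\mathbf{d}_{H}(s,t)$. The conditions \eqref{cond on L_theta} guarantee (via Lemma \ref{req on slow var}) that $\mathbf{d}_{Z}$ is a genuine metric and that $Z$ lies in the Gaussian class for which the hitting estimates of \cite{Chen Xiao 2012, Bierme Lacaux Xiao 2009} apply; in particular the lower bound gives $\mathbb{P}\{Z(E)\cap F\neq\varnothing\}\geq\mathsf{c}^{-1}\mathcal{C}_{\rho_{\mathbf{d}_{Z}}}^{d}(E\times F)$. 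Taking $F$ to be a $\gamma$-Ahlfors--David regular compact subset of $\mathbb{R}^d$ for some $\gamma\in[0,d)$ (a single point corresponding to $\gamma=0$), Proposition \ref{3rd prop mastrand} reduces both product quantities to the time set $E$: by \eqref{lwr cap estim 2} and \eqref{uppr Hausdorff estim 2},
\[
\mathcal{C}_{\mathbf{d}_{Z}}^{\,d-\gamma}(E)\leq\mathsf{c}\,\mathcal{C}_{\rho_{\mathbf{d}_{Z}}}^{d}(E\times F),\qquad \mathcal{H}_{\rho_{\mathbf{d}_{H}}}^{d}(E\times F)\leq\mathsf{c}\,\mathcal{H}_{\mathbf{d}_{H}}^{\,d-\gamma}(E).
\]
Everything is thereby reduced to constructing one compact set $E\subset\mathbf{I}$ with, writing $\beta:=d-\gamma$, $\mathcal{H}_{\mathbf{d}_{H}}^{\beta}(E)=\mathcal{H}^{H\beta}(E)=0$ and $\mathcal{C}_{\mathbf{d}_{Z}}^{\beta}(E)>0$.

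This last construction is the main obstacle. In the Euclidean metric on $\mathbf{I}$ the gauge underlying $\mathcal{H}_{\mathbf{d}_{H}}^{\beta}$ is $h(r)=r^{H\beta}$, while the kernel underlying $\mathcal{C}_{\mathbf{d}_{Z}}^{\beta}$ is $k(r)=\delta_Z(r)^{-\beta}\asymp r^{-H\beta}\big(1+\ell_{\theta_H}^2(r)\big)^{-\beta/2}$, whose reciprocal gauge $1/k(r)=h(r)\big(1+\ell_{\theta_H}^2(r)\big)^{\beta/2}$ strictly dominates $h$ precisely on the scales where $\ell_{\theta_H}$ is large, and $\limsup_{r\to0}\ell_{\theta_H}(r)=+\infty$ by the $\liminf$ part of \eqref{cond on L_theta}. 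The coexistence of $\mathcal{H}^{h}(E)=0$ with positive $k$-capacity is exactly the phenomenon quantified by Taylor's theorem \cite{Taylor61} relating generalized Hausdorff measures and capacities. Thus, provided $\ell_{\theta_H}$ (equivalently $L_{\theta_H}$) is chosen so that the regularity hypotheses of \cite{Taylor61} on the kernel $k$ hold --- which is the role of the boundedness and smoothness requirements \eqref{cond on L_theta} --- one builds a Cantor-type compact set $E$, with contraction ratios tuned to a sequence of scales $r_n\downarrow0$ along which $(1+\ell_{\theta_H}^2(r_n))^{\beta/2}\to\infty$, carrying a mass distribution of finite $k$-energy (hence $\mathcal{C}_{\mathbf{d}_{Z}}^{\beta}(E)>0$) while $\mathcal{H}^{H\beta}(E)=0$. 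The delicate points are to translate Taylor's hypotheses on $k$ into the conditions \eqref{cond on L_theta}, and to arrange the two scale requirements simultaneously in the Cantor construction.

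With such $E$ and $F$, the reductions above yield $\mathcal{C}_{\rho_{\mathbf{d}_{Z}}}^{d}(E\times F)>0$, hence $\mathbb{P}\{Z(E)\cap F\neq\varnothing\}>0$ and therefore $g(Y)>0$ on a set of positive $\mathbb{P}_Y$-measure, while $\mathcal{H}_{\rho_{\mathbf{d}_{H}}}^{d}(E\times F)=0$. Choosing $f$ in the (positive-probability) intersection of $\{g>0\}$ with the full-measure set of paths in $C^{\mathsf{w}_{H,\ell_{\theta_H}}}(\mathbf{I})\setminus C^{H}(\mathbf{I})$ gives a drift for which $\mathbb{P}\{(B^H+f)(E)\cap F\neq\varnothing\}>0$ although $\mathcal{C}_{\rho_{\mathbf{d}_{H}}}^{d}(E\times F)=\mathcal{H}_{\rho_{\mathbf{d}_{H}}}^{d}(E\times F)=0$. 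This is precisely \eqref{contre exple}, and it contradicts \eqref{upper-lower bounds 1}, which therefore cannot persist once the $H$-Hölder regularity of the drift is relaxed to $C^{\mathsf{w}_{H,\ell_{\theta_H}}}(\mathbf{I})$.
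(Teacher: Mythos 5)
Your proposal follows essentially the same architecture as the paper's proof: randomize the drift by an independent Gaussian process $Y=B^{\delta_{\theta_H}}$, set $Z=B^H+Y$, obtain a capacity lower bound for the hitting probabilities of $Z$ in its own canonical metric, reduce the product quantities to the time set $E$ by taking $F$ Ahlfors--David regular (via \eqref{lwr cap estim 2} and \eqref{uppr Hausdorff estim 2} of Proposition \ref{3rd prop mastrand}), produce $E$ from Taylor's theorem \cite{Taylor61} with gauge $t^{H(d-\gamma)}$ and kernel $1/\mathsf{v}_{H,\ell_{\theta_H}}^{d-\gamma}$, and finally use Fubini to extract a single deterministic path $f$; your explicit remark that $\mathcal{H}^{d}_{\rho_{\mathbf{d}_{H}}}(E\times F)=0$ forces $\mathcal{C}^{d}_{\rho_{\mathbf{d}_{H}}}(E\times F)=0$ is also correct and is implicitly used in the paper.

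There is, however, one genuine gap, and it sits at the single most critical step. The lower bound $\mathbb{P}\{Z(E)\cap F\neq\varnothing\}\geq \mathsf{c}^{-1}\,\mathcal{C}^{d}_{\rho_{\mathbf{d}_{Z}}}(E\times F)$ cannot be quoted from \cite{Chen Xiao 2012} or \cite{Bierme Lacaux Xiao 2009}: those results are proved for Gaussian fields satisfying conditions $(C_1)$ and $(C_2)$ of \cite{Xiao 2009}, which require the canonical metric to be comparable, up to multiplicative constants, to a fixed power $|t-s|^{H}$, together with strong local nondeterminism relative to that power metric. By construction $\mathbf{d}_{Z}(s,t)=|t-s|^{H}\bigl(1+\ell_{\theta_H}^{2}(|t-s|)\bigr)^{1/2}$ with $\limsup_{r\rightarrow 0}\ell_{\theta_H}(r)=+\infty$, so no such two-sided comparison can hold; this failure is not incidental but is exactly what makes the counterexample possible, since if $\mathbf{d}_{Z}$ were comparable to $\mathbf{d}_{H}$ the capacities $\mathcal{C}^{d}_{\rho_{\mathbf{d}_{Z}}}$ and $\mathcal{C}^{d}_{\rho_{\mathbf{d}_{H}}}$ would be equivalent and \eqref{contre exple} could never be produced this way. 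Lemma \ref{req on slow var} together with \eqref{cond on L_theta} yields concavity and metric properties of $\mathsf{v}_{H,\ell_{\theta_H}}$, not membership of $Z$ in the $(C_1)$--$(C_2)$ class, so your appeal to it does not repair the citation. The paper fills precisely this step with \cite[Theorem 4.1]{Hakiki-Viens}, a hitting-probability lower bound designed for Gaussian processes whose canonical metric is regularly varying (its Hypothesis 2.2 being checked through Lemma \ref{req on slow var} and \eqref{cond on L_theta}); absent that reference, you would need to rerun the second-moment argument and the analogue of \cite[Lemma 3.2]{Bierme Lacaux Xiao 2009} for the metric $\mathbf{d}_{Z}$ yourself. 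The rest of your outline, including the (admittedly sketched) Taylor-type construction of $E$, matches the paper's proof, which invokes Taylor's theorem as a black box rather than rebuilding the Cantor set.
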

\begin{remark}
\textbf{i)} It is worthwhile mentioning that $C^{\mathsf{w}_{H,\ell_{\theta_H}}}(\mathbf{I})$ verifies \eqref{compar spaces} as \,${\ell_{\theta_H}(\cdot)}$ defined in \eqref{l_theta} meets the condition  \eqref{cond modulus cont} via the first term in assertion \textbf{i.} of \eqref{cond on L_theta}, i.e. $\underset{x\rightarrow+\infty}{\liminf}\,\,L_{\theta_{H}}(x)=0$.

\noindent \textbf{ii)} It follows from the fact \eqref{compar spaces} that the drift $f$ in Theorem \ref{theo sharpness} belongs to $\underset{{\tau>0}}{\bigcap}\,C^{H-\tau}\left(\mathbf{I}\right)\setminus C^{H}\left(\mathbf{I}\right)$.
\end{remark}

Before drawing up the proof we provide the tools to be used. Let $\delta_{\theta_H}$ be the function given by the representation \eqref{increments variance}.  Theorem 7.3.1 in \cite{Marcus-Rosen} tells us that $\delta_{\theta_H}$ is normalised regularly varying with index $H$ with a slowly varying part {$\ell_{\delta_{\theta_H}}$} that satisfies
	\begin{equation}\label{slow var lim}
		\ell_{\theta_H}(h)\thicksim \ell_{\delta_{\theta_H}}(h)\quad \text{as $h\rightarrow 0$}.
\end{equation}
For more details see \eqref{ppty of slow var part}. Now we consider another probability space $(\Omega^{\prime},\mathcal{F}^{\prime},\mathbb{P}^{\prime})$ on which we define the real valued centered Gaussian process with stationary increments $B_0^{\delta_{\theta_H}}$, satisfying $B_0^{\delta_{\theta_H}}(0)=0$ a.s. and 
\begin{equation}\label{delta compar}
	\mathbb{E}^{\prime}\left(B_0^{\delta_{\theta_H}}(t)-B_0^{\delta_{\theta_H}}(s)\right)^2=\,\delta_{\theta_H}^2(|t-s|)\quad \text{ for all $t,s\in [0,1]$}.
\end{equation}
{Proposition \ref{Construc of stat increm process} gives a way to construct this process and Theorem \ref{modulus of continuity} provides us its modulus of continuity, that is $B_0^{\delta_{\theta_H}}\in C^{\mathsf{w}_{H,{\ell_{\theta_H}}}}(\mathbf{I})$ $\mathbb{P}'$-almost surely. The $d$-dimensional version of the process $B^{\delta_{\theta_H}}_0$ is the process  $B^{\delta_{\theta_H}}(t):=(B^{\delta_{\theta_H}}_1(t),....,B^{\delta_{\theta_H}}_d(t))$, where $B^{\delta_{\theta_H}}_1,...,B^{\delta_{\theta_H}}_d$ are $d$ independent copies of $B^{\delta_{\theta_H}}_0$. Let $Z$ be the $d$-dimensional mixed process defined on the product space $(\Omega\times\Omega^{\prime},\mathcal{F}\times\mathcal{F}^{\prime},\mathbb{P}\otimes\mathbb{P}^{\prime})$ by 
\begin{align}
	Z(t,(\omega,\omega^{\prime}))=B^H(t,\omega)+B^{\delta_{\theta_H}}(t,\omega^{\prime}) \text{ for all  } t\in  [0,1] \text{ and } (\omega,\omega^{\prime})\in \Omega\times\Omega^{\prime}.
 \end{align}
It is easy to see that the components of  $Z=(Z_1,...,Z_d)$ are independent copies of a real valued Gaussian process $Z_0= B_0^H+ B^{\delta_{\theta_H}}_0$ on $(\Omega\times\Omega^{\prime},\mathcal{F}\otimes\mathcal{F}^{\prime},\mathbb{P}\otimes\mathbb{P}^{\prime})$. Furthermore we have
$$ 
\widetilde{\mathbb{E}}\left(Z_0(t)-Z_0(s)\right)^2=\mathsf{v}_{2H,1+\ell_{{\delta_{\theta_H}}}^2}(|t-s|):=|t-s|^{2H}\left(1+\ell_{{\delta_{\theta_H}}}^2(|t-s|)\right),
$$
where $\widetilde{\mathbb{E}}$ denotes the expectation under the probability measure $\widetilde{\mathbb{P}}:=\mathbb{P}\otimes \mathbb{P}^{\prime}$. 

Using the assertion \text{i.} of \eqref{cond on L_theta}  and \eqref{slow var lim} we obtain the following
\begin{lemma}
	    There exists a constant $q>1$ such that 
\begin{equation}
	        q^{-1}\,\mathsf{v}_{2H,\, {\ell^2_{\theta_H}}}(h)\leq \widetilde{\mathbb{E}}\left(Z_0(t+h)-Z_0(t)\right)^2 \leq q\,\,\mathsf{v}_{2H,\, {\ell^2_{\theta_H}}}(h),
\end{equation}
for all $h\in [0,1]$ and $t\in [0,1]$. 
\end{lemma}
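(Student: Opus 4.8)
The plan is to exploit that the increments of $Z_0$ are stationary, so that $\widetilde{\mathbb{E}}\left(Z_0(t+h)-Z_0(t)\right)^2=\mathsf{v}_{2H,1+\ell^2_{\delta_{\theta_H}}}(h)=h^{2H}\bigl(1+\ell^2_{\delta_{\theta_H}}(h)\bigr)$ is independent of $t$, and the claim reduces, after dividing through by $h^{2H}$ (the case $h=0$ being trivial since both sides vanish), to producing a constant $q>1$ with
\[
q^{-1}\,\ell^2_{\theta_H}(h)\le 1+\ell^2_{\delta_{\theta_H}}(h)\le q\,\ell^2_{\theta_H}(h)\qquad\text{for all }h\in(0,1].
\]
First I would record two facts. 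By \eqref{slow var lim} the ratio $\ell_{\delta_{\theta_H}}(h)/\ell_{\theta_H}(h)\to 1$ as $h\to 0$, hence $\ell^2_{\delta_{\theta_H}}(h)/\ell^2_{\theta_H}(h)\to 1$ as well. On the other hand, writing $\ell^2_{\theta_H}(h)=\mathsf{c}_H\,L^{-1}_{\theta_H}(1/h)$ from \eqref{l_theta} and using that $\limsup_{x\to+\infty}L_{\theta_H}(x)<+\infty$ from assertion \textbf{i.} of \eqref{cond on L_theta}, together with the continuity of the (normalized) slowly varying function $L_{\theta_H}$ on $[1,\infty)$, shows that $L_{\theta_H}$ is bounded above there; consequently there is a constant $m>0$ with $\ell^2_{\theta_H}(h)\ge m$ for every $h\in(0,1]$.

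Next I would split $(0,1]$ into a neighbourhood of $0$ and its complement. On a small interval $(0,h_0]$ the asymptotic equivalence lets me assume $\tfrac12\,\ell^2_{\theta_H}(h)\le\ell^2_{\delta_{\theta_H}}(h)\le 2\,\ell^2_{\theta_H}(h)$. The lower bound is then immediate from $1+\ell^2_{\delta_{\theta_H}}(h)\ge \ell^2_{\delta_{\theta_H}}(h)\ge \tfrac12\,\ell^2_{\theta_H}(h)$, while for the upper bound the lower bound $\ell^2_{\theta_H}(h)\ge m$ absorbs the additive constant, since $1\le m^{-1}\ell^2_{\theta_H}(h)$, giving $1+\ell^2_{\delta_{\theta_H}}(h)\le \bigl(m^{-1}+2\bigr)\ell^2_{\theta_H}(h)$. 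On the remaining compact interval $[h_0,1]$ both $h\mapsto 1+\ell^2_{\delta_{\theta_H}}(h)$ and $h\mapsto \ell^2_{\theta_H}(h)$ are continuous and strictly positive, so their quotient stays between two positive constants by compactness. Taking $q$ to be the largest of the constants produced in the two regions (and at least $2$) yields the two-sided estimate.

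The step I expect to be the crux is the treatment of the additive $1$ in $1+\ell^2_{\delta_{\theta_H}}$: without a uniform lower bound on $\ell^2_{\theta_H}$ this constant could dominate the leading term whenever $\ell_{\theta_H}(h)$ is small, and the comparison would break down. This is exactly where the hypothesis $\limsup_{x\to+\infty}L_{\theta_H}(x)<+\infty$ enters, forcing $\ell^2_{\theta_H}$ to be bounded away from $0$; the complementary condition $\liminf_{x\to+\infty}L_{\theta_H}(x)=0$ plays no role in this estimate (it only makes $\ell_{\theta_H}$ unbounded, which is what later produces the extra roughness of the drift). Everything else amounts to routine bookkeeping of constants.
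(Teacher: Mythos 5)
Your proposal is correct and follows exactly the route the paper intends: the paper gives no written proof, merely asserting the lemma ``using assertion \textbf{i.} of \eqref{cond on L_theta} and \eqref{slow var lim}'', and these are precisely your two ingredients --- the equivalence $\ell_{\delta_{\theta_H}}\sim\ell_{\theta_H}$ near zero, and the bound $\limsup_{x\to+\infty}L_{\theta_H}(x)<+\infty$ forcing $\ell^2_{\theta_H}$ to stay bounded away from $0$ so that the additive $1$ in $1+\ell^2_{\delta_{\theta_H}}$ can be absorbed. Your identification of that absorption step as the crux, and of the $\liminf$ condition as irrelevant here, matches the paper's logic; the rest (increment-variance formula, splitting $(0,1]$ at some $h_0$, compactness on $[h_0,1]$) is the routine bookkeeping the authors left implicit.
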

\noindent For simplicity, we denote by $\mathbf{d}_{H,\ell_{\delta_{\theta_H}}}$ and $\mathbf{d}_{H,(1+\ell_{\delta_{\theta_H}}^2)^{1/2}}$ the canonical metrics of $B^{\delta_{\theta_H}}$ and $Z$ respectively. A consequence of the previous lemma these canonical metrics 
are strongly equivalents to the metric $(s,t)\mapsto \mathbf{d}_{H,{\ell_{\theta_H}}}(t,s):=\mathsf{v}_{H,\,{\ell_{\theta_H}}}(|t-s|)$, leading to the strong equivalence of the metrics $\rho_{\mathbf{d}_{H,\ell_{\delta_{\theta_H}}}}$, $\rho_{\mathbf{d}_{H,(1+\ell_{\delta_{\theta_H}}^2)^{1/2}}}$ and $\rho_{\mathbf{d}_{H,{\ell_{\theta_H}}}}$. 
Hence, their associated capacities 
are also equivalents.

\begin{proof}[Proof of Theorem \ref{theo sharpness}] Let us consider the Gaussian process $Z$ stated above. Using condition \eqref{cond on L_theta} we infer that ${\ell_{\theta_H}(\cdot)}$ satisfies \eqref{concav cond}. Then Lemma \ref{req on slow var} ensures that $\mathsf{v}_{H,{\ell_{\theta_H}(\cdot)}}$ verifies \cite[Hypothesis 2.2]{Hakiki-Viens}. Let $M>0$, applying \cite[Theorem 4.1]{Hakiki-Viens} and the fact that $\rho_{\mathbf{d}_{H,(1+\ell_{\delta_{\theta_H}}^2)^{1/2}}}$ and $\rho_{\mathbf{d}_{H,{\ell_{\theta_H}}}}$ are strongly equivalent,  there exist a positive constant $\mathsf{c}_1$ depending only on $\mathbf{I}$ and $M$ such that 
\begin{equation}\label{lwr bnd for Z}
    \widetilde{\mathbb{P}}\left\{Z(E)\cap F\neq \emptyset\right\}\geq 
    \,\mathsf{c}_1\,\mathcal{C}_{\rho_{\mathbf{d}_{H,{\ell_{\theta_H}}}}}^{d}(E\times F), 
\end{equation}
for any compact sets $E\subset \mathbf{I}$ and $F\subset [-M,M]^d$. Let $ 0< \gamma < d$ and fix a $\gamma$-Ahlfors-David regular set $F_{\gamma}\subset [-M,M]^d$. Then by using \eqref{lwr cap estim 2} with $\rho_1=\mathbf{d}_{H,{\ell_{\theta_H}}}$ and $\rho_3=\rho_{\mathbf{d}_{H,{\ell_{\theta_H}}}}$ and \eqref{uppr Hausdorff estim 2} with $\rho_1=\mathbf{d}_{H}$ and $\rho_3=\rho_{\mathbf{d}_{H}}$, we obtain 
\begin{equation}\label{lwr bnd Z & uppr bnd Haus}
\mathcal{C}_{\rho_{\mathbf{d}_{H,{\ell_{\theta_H}}}}}^{d}(E\times F_{\gamma})\geq\, \mathsf{c}_2^{-1}\, \mathcal{C}_{\mathbf{d}_{H,{\ell_{\theta_H}}}}^{ d-\gamma}(E) \quad \text{ and }\quad \mathcal{H}_{\rho_{\mathbf{d}_{H}}}^d(E\times F_{\gamma})\leq\, \mathsf{c}_{2}\, \mathcal{H}^{H(d-\gamma)}(E),
\end{equation}
for all compact $\,E\subset \mathbf{I}$ and for some constant $\mathsf{c}_2>0$.
Now it is not difficult to see that the functions $h(t):=t^{H(d-\gamma)}$ and  
$$
\Phi(t)= 1/\mathsf{v}^{d-\gamma}_{H,{\ell_{\theta_H}}}(t)=1/\mathsf{v}_{H\left(d-\gamma\right),{\ell_{\theta_H}^{\left(d-\gamma\right)}}}(t),
$$
satisfy the hypotheses of Theorem 4 in \cite{Taylor61} which allow us to conclude that there exists a compact set $E_{\gamma}\subset \mathbf{I}$ such that
\begin{equation}\label{app of taylor theo}
\mathcal{H}^{H(d-\gamma)}(E_{\gamma})=0\quad \quad \text{ and } \quad \quad \mathcal{C}_{\mathbf{d}_{H,{\ell_{\theta_H}}}}^{ d-\gamma}(E_{\gamma})>0.
\end{equation}
Consequently, combining \eqref{lwr bnd Z & uppr bnd Haus} and \eqref{app of taylor theo}, we have 
\begin{equation}\label{hitting Z}
\mathcal{H}_{\rho_{\mathbf{d}_{H}}}^d(E_{\gamma}\times F_{\gamma})=0  \quad \text{ and }\quad  \widetilde{\mathbb{P}}\left\{Z(E_{\gamma})\cap F_{\gamma}\neq \emptyset\right\}\geq\, (\mathsf{c}_1/\mathsf{c}_2)\,  \mathcal{C}_{\mathbf{d}_{H,{\ell_{\theta_H}}}}^{d-\gamma}(E_{\gamma})>0.
\end{equation}

\noindent Now the remainder of the proof is devoted to the construction of a drift $f$ satisfying \eqref{contre exple}. As a consequence of Fubini's theorem, we have 
 \begin{equation*}
\mathbb{E}^{\prime}\left(\mathbb{P}\left\{(B^H+B^{\delta_{\theta_H}}(\omega^{\prime}))(E_{\gamma})\cap F_{\gamma}\neq \emptyset \right\}-\mathsf{c}_3\, \mathcal{C}_{\mathbf{d}_{H,{\ell_{\theta_H}}}}^{d-\gamma}(E_{\gamma})\right)>0,
\end{equation*}
for some fixed positive constant $\mathsf{c}_3\in (0,\mathsf{c}_1/\mathsf{c}_2)$, leading to	
\begin{align}\label{desired paths}
\mathbb{P}^{\prime}\left\{\omega^{\prime}\in \Omega^{\prime}\,:\,\mathbb{P}\left\{(B^H+B^{\delta_{\theta_H}}(\omega^{\prime}))(E_{\gamma})\cap F_{\gamma}\neq \emptyset \right\}> \mathsf{c}_3\,\mathcal{C}_{\mathbf{d}_{H,{\ell_{\theta_H}}}}^{ d-\gamma}(E_{\gamma})\right\}>0.
\end{align}
We therefore choose the function $ f$ among of them. Hence we get the desired result. 
\end{proof}}
    


\section{Hitting points}

As mentioned previously in the introduction our goal in this section is to shed some light on the hitting probabilities for general measurable drift. The resulting estimates are given in the following.    

\noindent 

\begin{theorem}\label{prop hit point}
		Let $\{B^H(t): t\in [0,1]\}$ be a $d$-dimensional fractional Brownian motion with Hurst index $H\in (0,1)$. Let $f: [0,1]\rightarrow \mathbb{R}^d$ be a bounded Borel measurable function and let $E\subset \mathbf{I}$\, be a Borel set. Then for any $M>0$ there exists a constant ${\fontsize{14}{0} \selectfont \textbf{c}}_1\geq 1$ such that for all $x\in [-M,M]^d$ we have
\begin{align}\label{upper-lower hitting points}
{\fontsize{14}{0} \selectfont \textbf{c}}_1^{-1}\mathcal{C}_{\rho_{\mathbf{d}_{H}}}^{d}(Gr_E(f))\leq \mathbb{P}\left\lbrace \exists\, t\in E : (B^H+f)(t)=x\right\rbrace\leq {\fontsize{14}{0} \selectfont \textbf{c}}_1\, \mathcal{H}_{\rho_{\mathbf{d}_{H}}}^{d}(Gr_E(f)).
		\end{align}
\end{theorem}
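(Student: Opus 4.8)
The plan is to mirror the covering/second--moment scheme behind Theorem \ref{main theorem hitting}, but to work with the graph $Gr_E(f)$ in place of the product set $E\times F$, exploiting that the target $F=\{x\}$ is a single point. The governing identity is
$$\left\{\exists\,t\in E:(B^H+f)(t)=x\right\}=\left\{\exists\,t\in E:B^H(t)=x-f(t)\right\},$$
so that a generic point of $Gr_E(f)$ is $(t,f(t))$ and the event records whether the space--time path $s\mapsto(s,B^H(s))$ meets the moving target $x-f(t)$ for some $t\in E$. Since $f$ is merely Borel, I would first note that $Gr_E(f)$ is a Borel subset of $\mathbf{I}\times\mathbb{R}^d$ (it is the intersection of $E\times\mathbb{R}^d$ with the Borel set $\{(t,y):y-f(t)=0\}$), so that its capacity $\mathcal{C}^d_{\rho_{\mathbf{d}_H}}$ and Hausdorff measure $\mathcal{H}^d_{\rho_{\mathbf{d}_H}}$ are well defined, and that $t\mapsto(t,f(t))$ is a Borel isomorphism of $E$ onto $Gr_E(f)$, which lets me transport measures freely between $E$ and the graph.

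For the upper bound I would run a covering argument. Fix $M>0$ and put $M_2=M+\sup_{s\in\mathbf{I}}\|f(s)\|<\infty$, so $x-f(t)\in[-M_2,M_2]^d$ for every $t\in E$. Given $\gamma>\mathcal{H}^d_{\rho_{\mathbf{d}_H}}(Gr_E(f))$, take a cover of $Gr_E(f)$ by $\rho_{\mathbf{d}_H}$-balls of radii $r_i$ with $\sum_i(2r_i)^d\le\gamma$ and $r_i$ below the threshold of the Biermé--Lacaux--Xiao small--ball estimate \cite{Bierme Lacaux Xiao 2009}; after enlarging each ball to one of twice the radius centred at a graph point $(t_i,f(t_i))$ it meets (a bounded loss), I may assume $t_i\in E$. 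The key simplification is that if $(t,f(t))$ lies in the $i$-th ball then automatically $|t-t_i|^H<r_i$ and $\|f(t)-f(t_i)\|<r_i$, so $(B^H+f)(t)=x$ forces
$$\inf_{s\in\mathbf{I},\,|s-t_i|^H<r_i}\bigl\|B^H(s)-(x-f(t_i))\bigr\|<r_i;$$
crucially, no Hölder control on $f$ is needed, because the second coordinate of $\rho_{\mathbf{d}_H}$ already carries the oscillation of $f$. The small--ball estimate bounds the probability of this event by $c\,r_i^{d}$ (with parameter $M_2$), and summing over $i$ and letting $\gamma\downarrow\mathcal{H}^d_{\rho_{\mathbf{d}_H}}(Gr_E(f))$ gives the right-hand inequality of \eqref{upper-lower hitting points}.

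For the lower bound I would use the Paley--Zygmund method, assuming $\mathcal{C}^d_{\rho_{\mathbf{d}_H}}(Gr_E(f))>0$ (otherwise nothing is to prove). Choose $\sigma\in\mathcal{P}(Gr_E(f))$ with $\rho_{\mathbf{d}_H}$-energy at most $2/\mathcal{C}^d_{\rho_{\mathbf{d}_H}}(Gr_E(f))$, let $\mu$ be its transport to $E$, and define
$$\int_E g(s)\,\nu_n(ds)=\int_E(2\pi n)^{d/2}\exp\!\left(-\tfrac{n}{2}\,\|B^H(s)+f(s)-x\|^2\right)g(s)\,\mu(ds).$$
A Fourier/Gaussian computation gives $\mathbb{E}\|\nu_n\|\ge c_8>0$, the lower bound following from boundedness of $f$ and $x$ together with $s^{2H}\ge a^{2H}$ on $\mathbf{I}$. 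For the second moment, the Gaussian integral estimate of \cite{Bierme Lacaux Xiao 2009} dominates the integrand by a multiple of
$$[\det\Gamma_{1/n}(s,t)]^{-d/2}\exp\!\left(-\tfrac{c_3}{2}\,\frac{\|f(t)-f(s)\|^2}{\det\Gamma_{1/n}(s,t)}\right),\qquad \Gamma_{1/n}(s,t)=n^{-1}I_2+\mathrm{Cov}(B^H_0(s),B^H_0(t)),$$
where the two space arguments are $x-f(s)$ and $x-f(t)$, whose difference is exactly $f(t)-f(s)$, so that no cross term survives. Combining this with the strong local nondeterminism bound $\det\Gamma_{1/n}(s,t)\ge c_{10}\,|t-s|^{2H}$ and splitting according to whether $\det\Gamma_{1/n}(s,t)$ is larger or smaller than $\|f(s)-f(t)\|^2$ yields
$$\mathbb{E}\|\nu_n\|^2\le c_9\int_{Gr_E(f)}\int_{Gr_E(f)}\frac{\sigma(du)\,\sigma(dv)}{\rho_{\mathbf{d}_H}(u,v)^{d}}\le\frac{2c_9}{\mathcal{C}^d_{\rho_{\mathbf{d}_H}}(Gr_E(f))}.$$
Then, by Paley--Zygmund (cf. \cite{Kahane}), a subsequence of $\{\nu_n\}$ converges weakly to a finite measure $\tilde\nu$ supported on $\{t\in E:(B^H+f)(t)=x\}$ with $\mathbb{P}\{\|\tilde\nu\|>0\}\ge[\mathbb{E}\|\tilde\nu\|]^2/\mathbb{E}\|\tilde\nu\|^2\ge\tfrac{c_8^2}{2c_9}\,\mathcal{C}^d_{\rho_{\mathbf{d}_H}}(Gr_E(f))$, which is the left-hand inequality. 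The main technical point is the second--moment estimate: one must verify that, for the single--point target, the Gaussian bound collapses exactly onto the $\rho_{\mathbf{d}_H}$-energy of $Gr_E(f)$. It is precisely this that dispenses with the Hölder hypothesis on $f$ and replaces it by mere boundedness, the transport of the near--extremal measure between $E$ and the Borel graph being the accompanying bookkeeping.
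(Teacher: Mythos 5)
Your proposal is correct and follows essentially the same route as the paper's own proof: a covering of $Gr_E(f)$ by $\rho_{\mathbf{d}_{H}}$-balls combined with the Bierm\'e--Lacaux--Xiao small-ball estimate (Lemma \ref{lm estim Gauss}) for the upper bound, and the second-moment/Paley--Zygmund method for the lower bound, with the near-extremal measure transported between $Gr_E(f)$ and $E$ and the Fourier estimate of Lemma \ref{l5} applied at the space points $x-f(s)$, $x-f(t)$ so that the energy collapses exactly onto the $\rho_{\mathbf{d}_{H}}$-energy of the graph. The only deviations are cosmetic: you recenter the covering balls at graph points at the cost of a factor $2^d$ where the paper uses the original centers $(t_i,y_i)$ directly, and you re-derive the bound $\mathfrak{c}/\rho_{\mathbf{d}_{H}}^{d}$ from the intermediate Gaussian estimate together with strong local nondeterminism, whereas the paper simply invokes the final inequality stated in Lemma \ref{l5}.
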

\noindent The following lemmas {are very valuable} to prove Theorem  \ref{prop hit point} 
\begin{lemma}[Lemma 3.1, \cite{Bierme Lacaux Xiao 2009}]\label{lm estim Gauss}
	Let $\{B^H(t): t\in [0,1]\}$ be a fractional Brownian motion with Hurst index $H\in (0,1)$. {For any constant $M>0$}, there exists positive constants ${\fontsize{14}{0} \selectfont \textbf{c}}_2$ 
	and $\varepsilon_0>0$ such that for all $r\in (0,\varepsilon_0)$, $t\in \mathbf{I}$ and all $x\in [-M,M]^d$,
	\begin{equation}
	\mathbb{P}\left(\inf _{ s\in I,|s-t|^H\leq r}\|B^H(s)-x\| \leqslant r\right) \leqslant {\fontsize{14}{0} \selectfont \textbf{c}}_2\, r^{d}.
	\label{Gaussian estim 1}
	\end{equation}
\end{lemma}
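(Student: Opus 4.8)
The plan is to bound the probability that the path $B^H$ enters the Euclidean ball $\bar B(x,r)$ during the time window $W:=\{s:|s-t|^H\le r\}=[t-r^{1/H},t+r^{1/H}]$ by comparing the hitting event with the expected \emph{sojourn time} of $B^H$ in a slightly larger ball, and then applying Markov's inequality. Writing $h:=r^{1/H}$, I would first record the one-point small-ball estimate: for $s\in\mathbf{I}=[a,b]$ the vector $B^H(s)$ has i.i.d.\ $N(0,s^{2H})$ coordinates, so its density is bounded by $(2\pi a^{2H})^{-d/2}$ uniformly in $s$ and $x$, whence
\begin{equation*}
\mathbb{P}\bigl(\|B^H(s)-x\|\le \rho\bigr)\le \mathsf{c}\,\rho^{d}\qquad(s\ge a,\ x\in\R^d),
\end{equation*}
with $\mathsf{c}=\mathsf{c}(a,H,d)$. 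Choosing the sojourn window $W_2$ of length $2h$ on whichever side of $t$ stays inside $[a,1]$ (possible once $r$ is small, which fixes $\varepsilon_0$, since $t\in[a,b]\subset(0,1)$), Fubini gives the first-moment bound $\mathbb{E}[T]\le 2h\,\mathsf{c}\,(2r)^{d}$ for the sojourn $T:=\int_{W_2}\mathbf{1}\{\|B^H(s)-x\|\le 2r\}\,ds$.

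The core of the argument is to show that, conditionally on a hit, the sojourn is at least a fixed fraction of $h$. Let $\tau$ be the first entrance time of $B^H$ into $\bar B(x,r)$ inside $W$, so that $\|B^H(\tau)-x\|\le r$ on $\{\tau<\infty\}$ and $[\tau,\tau+h]\subset W_2$. For $s\in[\tau,\tau+h]$ the inclusion $\{\|B^H(s)-B^H(\tau)\|\le r\}\subset\{\|B^H(s)-x\|\le 2r\}$ yields
\begin{equation*}
\mathbb{E}[T\mid\mathcal{F}_\tau]\ \ge\ \int_{\tau}^{\tau+h}\mathbb{P}\bigl(\|B^H(s)-B^H(\tau)\|\le r\mid\mathcal{F}_\tau\bigr)\,ds .
\end{equation*}
If I can prove that this is $\ge c_0 h$ on $\{\tau<\infty\}$ for a constant $c_0>0$ independent of $t,x,r$, then $\mathbb{E}[T]\ge c_0 h\,\mathbb{P}(\tau<\infty)$, and combining with the first-moment bound gives $\mathbb{P}(\tau<\infty)\le (2^{d+1}\mathsf{c}/c_0)\, r^{d}$, which is exactly the asserted estimate with $\mathbf{c}_2:=2^{d+1}\mathsf{c}/c_0$.

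The main obstacle is precisely this conditional sojourn lower bound, and it is where the fine structure of $B^H$ enters. Given $\mathcal{F}_\tau$, the increment $B^H(s)-B^H(\tau)$ is Gaussian with conditional covariance $\sigma_s^2 I_d$, and since conditioning only decreases variance, $\sigma_s^2\le|s-\tau|^{2H}\le h^{2H}=r^2$; hence the conditional \emph{spread} is automatically of the right order $r$. The delicate point is the conditional \emph{mean} $\mu_s:=\mathbb{E}[B^H(s)-B^H(\tau)\mid\mathcal{F}_\tau]$: because fractional Brownian motion is not Markov, the future increments carry a drift that must be shown to be $O(r)$ with overwhelming conditional probability on $\{\tau<\infty\}$. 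This is what the strong local nondeterminism of $B^H$ is designed to provide, since it controls the conditional law of $B^H(s)$ given the entire past by the increment scale $|s-\tau|^{H}$; after restricting $s$ to $[\tau,\tau+\kappa h]$ for a small absolute constant $\kappa$ one then expects the conditional density of $B^H(s)-B^H(\tau)$ to stay bounded below on $\bar B(0,r)$ uniformly. I expect quantifying the drift of $B^H$ after the \emph{random} first-entrance time to be the hard technical heart; the companion joint-density estimate (the Gaussian integral bound governed by $\det\Gamma$) encodes the same information and offers an equivalent route. As a sanity check on why this refinement is needed, a soft self-similarity argument, using $\sup_{|s-t|\le h}\|B^H(s)-B^H(t)\|\stackrel{d}{=}r\sup_{|u|\le1}\|B^H(u)\|$ together with Fernique's sub-Gaussian tail for the supremum, already delivers $\mathbb{P}(\,\cdots)\le \mathsf{c}\,r^{d}(\log(1/r))^{d/2}$, so the entire difficulty lies in removing the logarithmic factor, for which strong local nondeterminism is indispensable.
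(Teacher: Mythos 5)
There is a genuine gap, and it sits exactly where you flag it: the conditional sojourn lower bound $\mathbb{E}[T\mid\mathcal{F}_\tau]\ge c_0 h$ on $\{\tau<\infty\}$ is asserted, not proved, and the tool you invoke cannot supply it. Strong local nondeterminism gives a \emph{lower} bound on conditional variances; what your argument needs is an \emph{upper} bound, uniform on $\{\tau<\infty\}$, for the conditional mean $\mu_s=\mathbb{E}[B^H(s)-B^H(\tau)\mid\mathcal{F}_\tau]$, which is a statement of the opposite kind. For fractional Brownian motion this conditional mean is given by a singular prediction kernel acting on the entire past trajectory, and at a \emph{random} first-entrance time $\tau$ there is no uniform $O(r)$ control: the Pruitt--Taylor first-moment/sojourn scheme you are adapting works for Brownian and stable processes precisely because the strong Markov property kills the drift after $\tau$, and fBm has no strong Markov property. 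So the "hard technical heart" you defer is not a quantitative refinement but the step at which the whole route breaks down.

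Your motivating sanity check is also miscalculated, and correcting it dissolves the problem. Over the single window $|s-t|\le h=r^{1/H}$, Dudley's entropy integral for the canonical metric $|s-s'|^H$ gives $\mathbb{E}\sup_{|s-t|\le h}\|B^H(s)-B^H(t)\|\le C\,h^H=C\,r$ with \emph{no} logarithmic factor (the $\sqrt{\log(1/r)}$ appears only in the uniform modulus over a fixed interval, i.e. when one quantifies over all windows simultaneously, which is not needed here). Consequently the direct argument closes: since $t\in\mathbf{I}=[a,b]$ with $a>0$, the density of $B^H(t)$ is bounded by $(2\pi a^{2H})^{-d/2}$; decompose the hitting event according to dyadic layers $\|B^H(t)-x\|\in(2^{k}r,2^{k+1}r]$, bound the probability of each layer by $c\,2^{(k+1)d}r^{d}$, bound the oscillation event $\sup_{|s-t|\le h}\|B^H(s)-B^H(t)\|\ge(2^{k}-1)r$ by Borell--TIS with scale $\sigma=r$ and mean $\le Cr$, and sum the resulting series $\sum_k 2^{(k+1)d}e^{-c(2^k-1)^2}<\infty$ to get $c\,r^d$ outright. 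This is essentially the proof of Lemma 3.1 in Biermé--Lacaux--Xiao, which the present paper cites rather than reproves; no local nondeterminism enters (in that reference SLND is used for the companion \emph{lower}-bound estimate, Lemma 3.2, not for this one).
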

\begin{lemma}[Lemma 3.2,\cite{Bierme Lacaux Xiao 2009} ]\label{l5}
		Let $B^H$ be a fractional Brownian motion with Hurst index $H\in (0,1)$. Then there exists a positive constant ${\fontsize{14}{0} \selectfont \textbf{c}}_3$ such that for all $\epsilon \in (0,1)$, $s,t\in \mathbf{I}$ and $x,y\in \mathbb{R}^d$ we have
\begin{align}\label{Gaussianestim2}
\int_{\mathbb{R}^{2d}}e^{-i(\langle\xi,x\rangle+\langle\eta,y\rangle)}&\exp\left(-\dfrac{1}{2}(\xi,\eta)\left(\varepsilon I_{2d}+\text{Cov}(B^{H}(s),B^{H}(t))\right)(\xi,\eta)^{T}\right)d\xi d\eta \nonumber\\
& \leq \dfrac{{\fontsize{14}{0} \selectfont \textbf{c}}_{3}}{\left(\rho_{\mathbf{d}_{H}}((s,x),(t,y))\right)^{d}},  
\end{align}
where $\Gamma_{\varepsilon}(s,t):=\varepsilon\,I_2+\text{Cov}(B^H_0(s),B^H_0(t))$, $I_{2d}$ and $I_2$ are the identities matrices of order $2d$ and $2$ respectively, $Cov(B^H(s),B^H(t))$ and $Cov(B^H_0(s),B^H_0(t))$ denote the covariance matrix of the random vectors $(B^H(s),B^H(t))$ and $(B^H_0(s),B^H_0(t))$ respectively, and $(\xi,\eta)^T$ is the transpose of the row vector $(\xi,\eta)$.
\end{lemma}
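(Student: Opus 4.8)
The plan is to exploit the independence of the $d$ coordinates of $B^H$ to factor the $2d$-dimensional Fourier--Gaussian integral into a product of $d$ planar integrals, evaluate each one in closed form, and then dominate the resulting Gaussian kernel by $\rho_{\mathbf{d}_{H}}^{-d}$ by means of the strong local nondeterminism of fractional Brownian motion. First I would record the block structure of the covariance. Since $B^H=(B^H_1,\dots,B^H_d)$ consists of independent copies of $B^H_0$, writing $\xi,\eta\in\mathbb{R}^d$ and ordering the $2d$ frequencies as $(\xi_1,\dots,\xi_d,\eta_1,\dots,\eta_d)$, the matrix $\varepsilon I_{2d}+\text{Cov}(B^H(s),B^H(t))$ equals $\Gamma_{\varepsilon}(s,t)\otimes I_d$, where $\Gamma_{\varepsilon}(s,t)$ has $\varepsilon+s^{2H}$, $\varepsilon+t^{2H}$ on the diagonal and $R:=\text{Cov}(B^H_0(s),B^H_0(t))$ off the diagonal. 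Hence the quadratic form splits over the pairs $(\xi_k,\eta_k)$ and the integral factors into $d$ copies of $\int_{\mathbb{R}^2}e^{-i\langle(\xi_k,\eta_k),(x_k,y_k)\rangle}\exp(-\tfrac12(\xi_k,\eta_k)\Gamma_{\varepsilon}(s,t)(\xi_k,\eta_k)^T)\,d\xi_k\,d\eta_k$. As $\Gamma_{\varepsilon}(s,t)$ is positive definite (it is $\varepsilon I_2$ plus a positive semidefinite covariance), the standard planar Gaussian Fourier identity, multiplied over the $d$ factors, gives the closed form
\[
\frac{(2\pi)^d}{(\det\Gamma_{\varepsilon}(s,t))^{d/2}}\exp\!\left(-\frac{1}{2}\,\frac{Q(x,y)}{\det\Gamma_{\varepsilon}(s,t)}\right),\qquad Q(x,y):=(\varepsilon+t^{2H})\|x\|^2-2R\langle x,y\rangle+(\varepsilon+s^{2H})\|y\|^2 .
\]

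Second, I would produce the intermediate bound in which $\|x-y\|^2$ appears in the exponent, namely that there is a constant $\kappa>0$, depending only on $H$ and $a=\min\mathbf{I}$, with $Q(x,y)\ge \kappa\,\|x-y\|^2$ uniformly in $\varepsilon\in(0,1)$ and $s,t\in\mathbf{I}$. Writing $Q(x,y)-\kappa\|x-y\|^2$ as a quadratic form in $(x,y)$, positivity reduces to the two scalar conditions $\kappa\le\min(\varepsilon+s^{2H},\varepsilon+t^{2H})$ and $\det\Gamma_{\varepsilon}(s,t)\ge \kappa\,(\varepsilon+s^{2H}+\varepsilon+t^{2H}-2R)$. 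Stationarity of increments gives $2R=s^{2H}+t^{2H}-|t-s|^{2H}$, so the last factor equals $2\varepsilon+|t-s|^{2H}$; and since $a>0$ forces $s^{2H}+t^{2H}\ge 2a^{2H}$, both conditions hold once $\kappa\le a^{2H}$ and $\det\Gamma_{\varepsilon}(s,t)\ge \kappa\,(2\varepsilon+|t-s|^{2H})$.

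The det-determinism estimate supplies the last ingredient. Expanding, $\det\Gamma_{\varepsilon}(s,t)=\varepsilon^2+\varepsilon(s^{2H}+t^{2H})+\det\text{Cov}(B^H_0(s),B^H_0(t))$, and the two-point form of the strong local nondeterminism of $B^H_0$ (Pitt's lemma) gives $\det\text{Cov}(B^H_0(s),B^H_0(t))=\text{Var}(B^H_0(s))\,\text{Var}(B^H_0(t)\mid B^H_0(s))\ge \textbf{c}_{0}\,|t-s|^{2H}$ on $\mathbf{I}$. Hence $\det\Gamma_{\varepsilon}(s,t)\ge 2a^{2H}\varepsilon+\textbf{c}_0|t-s|^{2H}$, which dominates $\kappa(2\varepsilon+|t-s|^{2H})$ for $\kappa=\min(a^{2H},\textbf{c}_0)$. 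With the quadratic-form bound the integral is at most $\frac{(2\pi)^d}{(\det\Gamma_{\varepsilon}(s,t))^{d/2}}\exp(-\frac{\kappa}{2}\frac{\|x-y\|^2}{\det\Gamma_{\varepsilon}(s,t)})$, and I would finish by a two-case comparison with $\rho_{\mathbf{d}_{H}}((s,x),(t,y))^d=\max\{|t-s|^{Hd},\|x-y\|^d\}$: if $\det\Gamma_{\varepsilon}(s,t)\ge\|x-y\|^2$ I discard the exponential and use $\det\Gamma_{\varepsilon}(s,t)\ge\textbf{c}_0|t-s|^{2H}$ to bound by $\lesssim|t-s|^{-Hd}$, while if $\det\Gamma_{\varepsilon}(s,t)<\|x-y\|^2$ the elementary inequality $\sup_{u>0}u^{d/2}e^{-\kappa u/2}<\infty$, applied with $u=\|x-y\|^2/\det\Gamma_{\varepsilon}(s,t)$, bounds by $\lesssim\|x-y\|^{-d}$; together these give $\lesssim\max\{|t-s|^{Hd},\|x-y\|^d\}^{-1}$, which is the asserted estimate.

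The main obstacle is the uniformity of the quadratic-form lower bound: the constant $\kappa$ must be chosen independently of $\varepsilon\in(0,1)$, of $s,t\in\mathbf{I}$, and of $x,y$. The delicate point is that the $\varepsilon$-terms in $Q$ could otherwise spoil the estimate as $\varepsilon\downarrow0$; what rescues the argument is precisely that $\mathbf{I}=[a,b]$ is bounded away from $0$, so that $s^{2H}+t^{2H}\ge 2a^{2H}>0$ absorbs the $\varepsilon$ contribution, while the strong local nondeterminism controls the genuinely degenerate direction $|t-s|\to0$.
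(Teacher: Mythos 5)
Your proof is correct and follows essentially the same route as the proof the paper relies on (Lemma 3.2 of Biermé--Lacaux--Xiao, which the paper cites rather than reproves, and whose ingredients reappear in the paper's determinant estimate $\det\Gamma_{\varepsilon}(s,t)\geq \det\mathrm{Cov}(B^H_0(s),B^H_0(t))\geq \mathsf{c}_0\,|t-s|^{2H}$ and the two-case comparison with $\max\{|t-s|^{Hd},\|x-y\|^d\}$): exact evaluation of the Gaussian Fourier integral as $(2\pi)^d(\det\Gamma_{\varepsilon}(s,t))^{-d/2}\exp\left(-Q(x,y)/(2\det\Gamma_{\varepsilon}(s,t))\right)$ via the tensor structure $\Gamma_{\varepsilon}(s,t)\otimes I_d$, an exponent bound $Q(x,y)\geq\kappa\|x-y\|^2$ from strong local nondeterminism, and then the case split according to whether $\det\Gamma_{\varepsilon}(s,t)$ exceeds $\|x-y\|^2$. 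Your explicit $2\times 2$ positive-semidefiniteness verification of the intermediate bound, using stationarity of increments to reduce the determinant condition to $\det\Gamma_{\varepsilon}(s,t)\geq\kappa\,(2\varepsilon+|t-s|^{2H})$ with $\kappa=\min(a^{2H},\mathsf{c}_0)$ uniform in $\varepsilon\in(0,1)$, is a correct self-contained rendering of the step the paper takes from the citation.
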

	\begin{proof}[Proof of Theorem \ref{prop hit point}]
		  We start with the upper bound. Choose an arbitrary constant  $\gamma>\mathcal{H}_{\rho_{\mathbf{d}_{H}}}^d(Gr_E(f))$, then there is a covering of $Gr_E(f)$ by balls $\{B_{\rho_{\mathbf{d}_{H}}}((t_i,y_i),r_i), i\geq 1\}$ in $\mathbb{R}_+\times\mathbb{R}^d$ such that 
		\begin{equation}
		Gr_E(f)\subseteq \bigcup_{i=1}^{\infty}B_{\rho_{\mathbf{d}_{H}}}((t_i,y_i),r_i)\quad  \text{ and }\quad  \sum_{i=1}^{\infty}(2r_i)^{d}        \leq \gamma.\label{cover Gr_E(f)}
		\end{equation}
		Let $\delta_0$ and $M$ being the constants given in Lemma \ref{lm estim Gauss}. We assume without loss of generality that $r_i<\delta_0$ for all $i \geq 1$. Let $x\in [-M,M]^d$, it is obvious that
		\begin{align}\label{event covering 2}
		\left\{ \exists s\in E\right.&\left.:(B^{H}+f)(s)=x\right\} \subseteq\\
&\bigcup_{i=1}^{\infty}\left\{ \exists\,\left(s,f(s)\right)\in\left(t_{i}-r_{i}^{1/H},t_{i}+r_{i}^{1/H}\right)\times B(y_{i},r_{i})\text{ s.t. }(B^{H}+f)(s)=x\right\}.\nonumber
		\end{align}
Since for every fixed $i\geq 1$ we have 
\begin{align}\label{event inclu}
\left\{ \exists\,\left(s,f(s)\right)\in\left(t_{i}-r_{i}^{1/H},t_{i}+r_{i}^{1/H}\right)\right.& \left.\times B(y_{i},r_{i})\text{ s.t. }(B^{H}+f)(s)=x\right\} \nonumber\\
	&\subseteq \left\{ \inf_{|s-t_i|^{H}<r_i }\|B^H(s)-x+y_i\|\leq r_i \right\},
		\end{align}
		then we get from \cite[Lemma 3.1]{Bierme Lacaux Xiao 2009} that 
\begin{align}
		\mathbb{P}\left\{ \exists\,\left(s,f(s)\right)\in\left(t_{i}-r_{i}^{1/H},t_{i}+r_{i}^{1/H}\right)\times\right.&\left. B(y_{i},r_{i})\text{ s.t. }(B^{H}+f)(s)=x\right\}\nonumber\\
  & \leq\mathbb{P}\left\{ \inf_{|s-t_{i}|^{H}<r_{i}}\|B^{H}(s)-x+y_{i}\|\leq r_i\right\} \nonumber\\
		& \leq {\fontsize{14}{0} \selectfont \textbf{c}}_{2}\,r_{i}^{d},\label{proba estim 2}
\end{align}
where ${\fontsize{14}{0}\selectfont \textbf{c}}_4$ depends on $H$, $\mathbf{I}$, $M$ and $f$ only. Combining \eqref{cover Gr_E(f)}-\eqref{proba estim 2} we derive that 
$$
\mathbb{P}\left\lbrace \exists s\in E : (B^H+f)(s)=x\right\rbrace\leq 2^{-d}{\fontsize{14}{0} \selectfont \textbf{c}}_{2}\, \gamma.
$$
Let  $\gamma\downarrow \mathcal{H}_{{\rho_{\mathbf{d}_{H}}}}^{d}(Gr_E(f))$, the upper bound in \eqref{upper-lower hitting points} follows.
		
The lower bound in \eqref{upper-lower hitting points} holds from the second moment argument. We assume that $\mathcal{C}_{\rho_{\mathbf{d}_{H}}}^{d}(Gr_E(f))>0$, then let $\sigma$ be a measure supported on $Gr_E(f)$ such that 
		\begin{align}\label{ener<cap}
		\mathcal{E}_{\rho_{\mathbf{d}_{H}},d}(\sigma)=\int_{Gr_E(f)}\int_{Gr_E(f)}\frac{d\sigma(s,f(s))d\sigma(t,f(t))}{\rho_{\mathbf{d}_{H}}((s,f(s)),(t,f(t)))^{d}}\leq \frac{2}{\mathcal{C}_{\rho_{\mathbf{d}_{H}}}^{d}(Gr_E(f))}.
		\end{align}
		Let $\nu$ be the measure on $E$ satisfying $\nu:=\sigma\circ P_1^{-1} $ where $P_1$ is the projection mapping on $E$, i.e. $P_1(s,f(s))=s$. For $n\geq 1$ we consider a family of random measures $\nu_n$ on $E$ defined by
		
		\begin{align} \label{seq rand meas Gr}
		\int_{E} g(s) \nu_{n}(d s) &=\int_{E} (2 \pi n)^{d / 2} \exp \left(-\frac{n\|B^H(s)+f(s)-x\|^2}{2}\right) g(s) \nu(d s) \nonumber
		\\
		&=\int_{E} \int_{\mathbb{R}^{d}} \exp \left(-\frac{\|\xi\|^{2}}{2 n}+i\left\langle\xi, B^H(s)+f(s)-x\right\rangle\right) g(s) d \xi \nu(d s), 
		\end{align}
		where $g$ is an arbitrary measurable function on $\mathbb{R}_+$. Our aim is  to show that  $\left\lbrace \nu_n, n\geq 1\right\rbrace $ has a subsequence which converges weakly to a finite measure $\nu_{\infty}$ supported on the set $\{s\in E : B^H(s)+f(s)=x\}$.
		To carry out this goal, we will start by establishing the following inequalities 
		\begin{align}
		\mathbb{E}(\|\nu_n\|)\geqslant {\fontsize{14}{0} \selectfont \textbf{c}}_{5},\quad \quad \mathbb{E}(\|\nu_n\|^2)\leqslant {\fontsize{14}{0} \selectfont \textbf{c}}_{3} \mathcal{E}_{\rho_{\mathbf{d}_{H}},d}(\sigma),\label{esp energy}
		\end{align}
		which constitute together with the Paley-Zygmund inequality  the cornerstone of the proof.
		Here $\|\nu_n\|$ denotes the total mass of $\nu_n$. By \eqref{seq rand meas Gr}, Fubini's theorem and the use of the characteristic function of a Gaussian vector we have 
		\begin{align}\label{energ-mes1}
		\mathbb{E}(\|\nu_n\|)& =\int_{E}\int_{\mathbb{R}^{d}}e^{-i\left\langle \xi,x-f(s)\right\rangle }\exp\left(-\frac{\|\xi\|^{2}}{2n}\right)\mathbb{E}\left(e^{i\left\langle \xi,B^{H}(s)\right\rangle }\right)\,d\xi\,\nu(ds)\nonumber\\
		& =\int_{E}\int_{\mathbb{R}^{d}}e^{-i\left\langle \xi,x-f(s)\right\rangle }\exp\left(-\frac{1}{2}\left(\frac{1}{n}+s^{2H}\right)\|\xi\|^{2}\right)\,d\xi\,\nu(ds)\nonumber\\
		& =\int_{E}\left(\frac{2\pi}{n^{-1}+s^{2H}}\right)^{d/2}\exp\left(-\frac{\|x-f(s)\|^{2}}{2\left(n^{-1}+s^{2H}\right)}\right)\,\nu(ds)\nonumber\\
		& \geq\int_{E}\left(\frac{2\pi}{1+s^{2H}}\right)^{d/2}\exp\left(-\frac{\|x-f(s)\|^{2}}{2s^{2H}}\right)\,\nu(ds)\nonumber\\
		&\geq {\fontsize{14}{0} \selectfont \textbf{c}}_{5}>0.
		\end{align}  
	Since $f$ is bounded, $x\in [-M,M]^d$ and $\nu$ is a probability measure we conclude that ${\fontsize{14}{0} \selectfont \textbf{c}}_{5}$ is independent of $\nu$ and $n$. This gives the first inequality in \eqref{esp energy}.
	
	We will now turn our attention to the second inequality  in \eqref{esp energy}.  By \eqref{seq rand meas Gr} and Fubini's theorem again we obtain
		\begin{align}
		\mathbb{E}\left(\|\nu_{n}\|^{2}\right)   & =\int_{E}\int_{E}\nu(ds)\nu(dt)\int_{\mathbb{R}^{2d}}e^{-i(\left\langle \xi,x-f(s)\right\rangle +\left\langle \eta,x-f(t)\right\rangle )} \nonumber\\
		& \times\exp(-\frac{1}{2}(\xi,\eta)(n^{-1}I_{2d}+Cov(B^{H}(s),B^{H}(t)))(\xi,\eta)^{T})d\xi d\eta \nonumber\\
		& \leqslant {\fontsize{14}{0} \selectfont \textbf{c}}_{3}\int_{Gr_{E}(f)}\int_{Gr_{E}(f)}\frac{d\sigma(s,f(s))d\sigma(t,f(t))}{(\max\{|t-s|^{H},\|f(t)-f(s)\|\})^{d}}={\fontsize{14}{0} \selectfont \textbf{c}}_{3}\,\mathcal{E}_{\rho_{\mathbf{d}_{H}},d}(\sigma)<\infty,
		\end{align}
where the first inequality  is a direct consequence of  Lemma \ref{l5}. Plugging the moment estimates of \eqref{esp energy} into the Paley–Zygmund inequality (c.f. Kahane \cite{Kahane}, p.8), allows us to confirm that there exists an event  $\Omega_0$ of positive probability such that, for all $\omega\in \Omega_0$, $(\nu_n(\omega))_{n\geq 1}$ admits a subsequence converging weakly to a finite positive measure $\nu_{\infty}(\omega)$  supported on the set $\{s\in E : B^H(\omega,s)+f(s)=x\}$, satisfying the moment estimates in \eqref{esp energy}. Hence we have
\begin{align}
\mathbb{P}\left\{\exists s\in E: (B^H+f)(s)=x\right\}\geq \mathbb{P}\left(\|\nu_{\infty}\|>0\right)\geq \frac{\mathbb{E}(\|\nu_{\infty}\|)^2}{\mathbb{E}(\|\nu_{\infty}\|^2)}\geq \frac{{\fontsize{14}{0} \selectfont \textbf{c}}_5^2}{{\fontsize{14}{0} \selectfont \textbf{c}}_3\mathcal{E}_{\rho_{\mathbf{d}_{H}},d}(\sigma)}.
		\end{align}
		Combining this with \eqref{ener<cap} yields the lower bound in \eqref{upper-lower hitting points}. The proof is completed.
	\end{proof}
 
\begin{remark}\label{hard extension}
We mention that the covering argument used to prove the upper bound in \eqref{upper-lower hitting points} can also serve to show that for any Borel set $F\subset\mathbb{R}^d$, there exists a positive constant ${\fontsize{14}{0} \selectfont \textbf{c}}$ such that 
\begin{align}
\mathbb{P}\left\lbrace (B^H+f)(E)\cap F\neq \emptyset \right\rbrace\leq {\fontsize{14}{0} \selectfont \textbf{c}}\, \mathcal{H}_{\widetilde{\rho}_H}^d(Gr_E(f)\times F).
		\end{align}
		Here $\mathcal{H}_{\widetilde{\rho}_{\mathbf{d}_{H}}}^{\alpha}(\centerdot)$ is the $\alpha$-dimensional Hausdorff measure on the metric space $(\mathbb{R}_+\times\mathbb{R}^d\times\mathbb{R}^d,\widetilde{\rho}_{\mathbf{d}_{H}})$, where the metric ${\widetilde{\rho}_{\mathbf{d}_{H}}}$ is defined by 
\begin{align*}
\widetilde{\rho}_{\mathbf{d}_{H}}((s,x,u),(t,y,v)):=\max\{|t-s|^H,\|x-y\|,\|u-v\|\}.
\end{align*}
		But it seems hard to establish a lower bound in terms of $\mathcal{C}_{\widetilde{\rho}_H}^{d}(Gr_E(f)\times F)$ even for Ahlfors-David regular set $F$.
	\end{remark}
\noindent As a consequence of Theorem \ref{prop hit point}, we obtain a weaker version of \cite[Theorem  3.2]{Erraoui Hakiki} 

\begin{corollary}\label{cor on Leb-image}
 Let $B^H$, $f$, and $E$ as in Theorem \ref{prop hit point}. Then
 \begin{itemize}
     \item[i)] If \,$\,\mathcal{C}_{\rho_{\mathbf{d}_{H}}}^{d}(Gr_E(f))>0$ then \,$\,\lambda_d\left((B^H+f)(E)\right)>0$ with positive probability.
     \item[ii)] If $\mathcal{H}_{\rho_{\mathbf{d}_{H}}}^{d}(Gr_E(f))=0$ then \,$\,\lambda_d\left((B^H+f)(E)\right)=0$ almost surely.
\end{itemize}
\begin{proof}
       Integrating \eqref{upper-lower hitting points} of Theorem \ref{prop hit point} over all cube $[-M,M]^d$, $M>0$ with respect Lebesgue measure $\lambda_d$, we obtain that
     \begin{align}
          (2M)^d\,{\fontsize{14}{0} \selectfont \textbf{c}}_1^{-1}\mathcal{C}_{\rho_{\mathbf{d}_{H}}}^{d}(Gr_E(f))\leq \mathbb{E}\left[\lambda_d\left([-M,M]^d\cap (B^H+f)(E)\right)\right]\leq  (2M)^d\,{\fontsize{14}{0} \selectfont \textbf{c}}_1\, \mathcal{H}_{\rho_{\mathbf{d}_{H}}}^{d}(Gr_E(f)).
     \end{align}
Therefore if $\mathcal{C}_{\rho_{\mathbf{d}_{H}}}^{d}(Gr_E(f))>0$ we obtain  
$$
\mathbb{E}\left[\lambda_d\left((B^H+f)(E)\right)\right]>0.
$$ 
Hence $\lambda_d\left((B^H+f)(E)\right)>0$ with positive probability, which finishes the proof of (i). On the other hand, if $\mathcal{H}_{\rho_{\mathbf{d}_{H}}}^{d}(Gr_E(f))=0$ we obtain that $\lambda_d\left([-n,n]^d \cap (B^H+f)(E)\right)=0$ a.s. for all $n\in \mathbb{N}^{*}$. Then we have $\lambda_d\left((B^H+f)(E)\right)=0$ a.s. Hence the proof of (ii) is completed.
\end{proof}
\end{corollary}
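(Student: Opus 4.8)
The plan is to derive both assertions directly from the two-sided hitting estimate \eqref{upper-lower hitting points} of Theorem \ref{prop hit point} by integrating it in the space variable $x$. The guiding identity is that, for each fixed realization, a point $x$ belongs to the image $(B^H+f)(E)$ exactly when the hitting event of Theorem \ref{prop hit point} occurs, so that
\[
\lambda_d\bigl((B^H+f)(E)\cap[-M,M]^d\bigr)=\int_{[-M,M]^d}\mathbf{1}\bigl\{\exists\,t\in E:(B^H+f)(t)=x\bigr\}\,dx.
\]
Taking expectations and applying Tonelli's theorem to the nonnegative integrand $(x,\omega)\mapsto\mathbf{1}\{\exists\,t\in E:(B^H+f)(t,\omega)=x\}$ replaces the right-hand side by $\int_{[-M,M]^d}\mathbb{P}\{\exists\,t\in E:(B^H+f)(t)=x\}\,dx$. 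Since \eqref{upper-lower hitting points} bounds the integrand uniformly for $x\in[-M,M]^d$ between the constants $\mathbf{c}_1^{-1}\mathcal{C}_{\rho_{\mathbf{d}_{H}}}^{d}(Gr_E(f))$ and $\mathbf{c}_1\mathcal{H}_{\rho_{\mathbf{d}_{H}}}^{d}(Gr_E(f))$, multiplying by $\lambda_d([-M,M]^d)=(2M)^d$ produces the chain
\[
(2M)^d\,\mathbf{c}_1^{-1}\,\mathcal{C}_{\rho_{\mathbf{d}_{H}}}^{d}(Gr_E(f))\leq\mathbb{E}\bigl[\lambda_d\bigl((B^H+f)(E)\cap[-M,M]^d\bigr)\bigr]\leq(2M)^d\,\mathbf{c}_1\,\mathcal{H}_{\rho_{\mathbf{d}_{H}}}^{d}(Gr_E(f)),
\]
which is the single quantitative input both parts rely on.

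\textbf{Part (i).} If $\mathcal{C}_{\rho_{\mathbf{d}_{H}}}^{d}(Gr_E(f))>0$, then the left inequality (taken at any fixed $M$, say $M=1$) shows that the nonnegative random variable $\lambda_d((B^H+f)(E))\geq\lambda_d((B^H+f)(E)\cap[-M,M]^d)$ has strictly positive expectation. A nonnegative random variable with positive expectation cannot vanish almost surely, hence $\lambda_d((B^H+f)(E))>0$ on an event of positive probability, which is exactly the claim.

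\textbf{Part (ii).} If $\mathcal{H}_{\rho_{\mathbf{d}_{H}}}^{d}(Gr_E(f))=0$, the right inequality forces $\mathbb{E}[\lambda_d((B^H+f)(E)\cap[-M,M]^d)]=0$ for every $M>0$, so that $\lambda_d((B^H+f)(E)\cap[-M,M]^d)=0$ almost surely. Applying this along $M=n$ and using the exhaustion $(B^H+f)(E)=\bigcup_{n\geq1}\bigl((B^H+f)(E)\cap[-n,n]^d\bigr)$ together with countable subadditivity of a $\lambda_d$-null family yields $\lambda_d((B^H+f)(E))=0$ almost surely.

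The only genuinely delicate step, and the one I would take care to justify, is the joint measurability of $(x,\omega)\mapsto\mathbf{1}\{x\in(B^H+f)(E)(\omega)\}$ required to invoke Tonelli. I would argue that, because $B^H$ has continuous paths and $f$ is Borel, the map $\Phi(t,\omega)=B^H(t,\omega)+f(t)$ is $\mathcal{B}([0,1])\otimes\mathcal{F}$-measurable, so the set $\{(t,x,\omega):t\in E,\ \Phi(t,\omega)=x\}$ is product-measurable; its projection onto the $(x,\omega)$-coordinates is then universally measurable by the measurable projection theorem (working, if necessary, on the completion of the product space), and Theorem \ref{prop hit point} already guarantees that each $x$-section is an event. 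This legitimizes the interchange against the completed measure $\lambda_d\otimes\mathbb{P}$ and makes the argument rigorous.
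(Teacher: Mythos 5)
Your proof is correct and follows essentially the same route as the paper: integrating the two-sided hitting estimate \eqref{upper-lower hitting points} over $[-M,M]^d$ via Tonelli, deducing (i) from strict positivity of the expectation and (ii) by exhausting $\mathbb{R}^d$ along the cubes $[-n,n]^d$. Your closing paragraph on the joint measurability of $(x,\omega)\mapsto\mathbf{1}\{x\in(B^H+f)(E)(\omega)\}$, handled via the measurable projection theorem on the completed product space, is a careful justification of a step the paper's proof passes over silently, but it does not change the argument itself.
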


\begin{remark}
\textbf{(i)} Let $\dim_{\rho_{\mathbf{d}_{H}}}(\cdot)$ be the Hausdorff dimension in the metric space  $( \mathbb{R}_+\times \mathbb{R}^d,\rho_{\mathbf{d}_{H}})$ defined in \eqref{def Haus dim}. There is a close relationship between $\dim_{\rho_{\mathbf{d}_{H}}}(\cdot)$ and $H$-parabolic Hausdorff dimension $\dim_{\Psi, H}(\cdot)$, used in Peres and Sousi \cite{Peres&Sousi2016} and in Erraoui and Hakiki \cite{Erraoui Hakiki}, expressed by
$$
\dim_{\Psi, H}(.)\equiv H \times\dim_{\rho_{\mathbf{d}_{H}}}(.).
$$
See \cite[Remark 2.2]{Erraoui Hakiki}.
\\
\textbf{(ii)} The previous corollary is a weaker version of \cite[Theorem  3.2]{Erraoui Hakiki} in the following sense: According to Theorem 1.2. in \cite{Peres&Sousi2016} and \textbf{(i)} we have 
$$
\dim(B^H+f)(E)=\frac{\dim_{\Psi,H}(Gr_E(f))}{H}\wedge d=\dim_{\rho_{\mathbf{d}_{H}}}(Gr_E(f))\wedge d.
$$
Therefore \cite[Theorem 3.2.]{Erraoui Hakiki} asserts that, if $\dim_{\rho_{\mathbf{d}_{H}}}(Gr_E(f))>d$ then $\lambda_d(B^H+f)(E))>0$ almost surely. On the other hand, Corollary \ref{cor on Leb-image} (i) ensures, under the condition $\mathcal{C}_{\rho_{\mathbf{d}_{H}}}^{d}(Gr_E(f))>0$, that $\lambda_d(B^H+f)(E))>0$ only with positive probability. It is well known from Frostman's Lemma that the {condition} $\mathcal{C}_{\rho_{\mathbf{d}_{H}}}^{d}(Gr_E(f))>0$ is weaker than $\dim_{\rho_{\mathbf{d}_{H}}}(Gr_E(f))>d$.
\end{remark}

\section{Application to polarity}

{Let} $E\subset \mathbf{I}$. We say that a point $x\in \mathbb{R}^d$ is polar for $(B^H+f)\vert_{E}$, the restriction of $(B^H+f)$ to $E$, if 
\begin{equation}\label{def of polarity}
	\mathbb{P}\left\{ \exists \, t\in E\,:\, (B^H+f)(t)=x\right\}=0.
\end{equation}
Otherwise, $x$ is said to be non-polar for $(B^H+f)\vert_{E}$. In other words, $(B^H+f)\vert_{E}$ hits the point $x$. 

\noindent It is noteworthy that, when $f\in C^{H}(\mathbf{I})$, the hitting probabilities estimates in \eqref{upper-lower hitting points} becomes 
\begin{equation}\label{hits pts of B^H regular drift f}
	{\fontsize{14}{0} \selectfont \textbf{c}}_1^{-1}\mathcal{C}^{Hd}(E)\leq \mathbb{P}\left\lbrace \exists t\in E : (B^H+f)(t)=x\right\rbrace\leq {\fontsize{14}{0} \selectfont \textbf{c}}_1\, \mathcal{H}^{Hd}(E).
\end{equation}
See also Corollary 2.8 in \cite{Chen2013}. Consequently, all points are non-polar (resp. polar) for $(B^H+f)\vert_{E}$ when $\dim(E)>Hd$ (resp. $\dim(E)<Hd$). However, the critical dimensional case, which is the most important and not easy to deal with, is $\dim(E)=Hd$. This is undecidable in general as illustrated in the following
\begin{proposition}\label{indecid-crital-cas}
	Let $\{B^{H}(t) : t \in [0,1]\}$ be a $d$-dimensional fractional Brownian motion of Hurst index $H\in (0,1)$ such that $H d<1$. Let $f:[0,1]\rightarrow \mathbb{R}^d$ be a $H$-Hölder continuous function and $E\subset \mathbf{I}$ be a Borel set. Then
	there exist two Borel subsets $E_1$ and $E_2$  of $\mathbf{I}$ such that $\dim(E_1)=\dim(E_2)=H\,d$ and for all $x\in \mathbb{R}^d$ we have  
	$$
	\mathbb{P}\left\{\exists s\in E_1\,:\, (B^H+f)(s)=x\right\}=0 \quad \text{and} \quad \mathbb{P}\left\{\exists s\in E_2\,:\, (B^H+f)(s)=x\right\}>0.        
	$$
\end{proposition}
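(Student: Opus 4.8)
The plan is to read off polarity directly from the two–sided bound \eqref{hits pts of B^H regular drift f}, which is available precisely because $f\in C^{H}(\mathbf{I})$: the upper bound shows that $x$ is polar for $(B^{H}+f)\vert_{E}$ as soon as $\mathcal{H}^{Hd}(E)=0$, while the lower bound shows that $x$ is non-polar as soon as $\mathcal{C}^{Hd}(E)>0$. Since $Hd<1$, the critical exponent $Hd$ lies in $(0,1)$, hence it is realised as the Hausdorff dimension of genuine Cantor-type subsets of $\mathbf{I}$, and $\mathcal{C}^{Hd}$ is a nontrivial Riesz capacity on the line. The whole task thus reduces to producing two Borel subsets of $\mathbf{I}$ of dimension exactly $Hd$ lying on opposite sides of the critical threshold: one, $E_1$, with $\mathcal{H}^{Hd}(E_1)=0$, and another, $E_2$, with $\mathcal{C}^{Hd}(E_2)>0$ (which forces $\mathcal{H}^{Hd}(E_2)=\infty$).

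For $E_1$ I would take a generalised Cantor set carrying a natural probability measure $\mu_1$ with $\mu_1(B(a,r))\asymp h_1(r)$, where $h_1(r)=r^{Hd}\log(1/r)$. Because $h_1$ is regularly varying of index $Hd$, a set with $0<\mathcal{H}^{h_1}(E_1)<\infty$ satisfies $\dim(E_1)=Hd$; and since $r^{Hd}=o(h_1(r))$, the finiteness of $\mathcal{H}^{h_1}(E_1)$ forces $\mathcal{H}^{Hd}(E_1)=0$. The upper bound in \eqref{hits pts of B^H regular drift f} then gives $\mathbb{P}\{\exists s\in E_1:(B^{H}+f)(s)=x\}=0$ for every $x$, i.e. points are polar.

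For $E_2$ the point is that an $Hd$-Ahlfors--David regular set will not suffice: its natural measure has a logarithmically divergent $Hd$-energy, so $\mathcal{C}^{Hd}=0$ (consistent with the Remark after Proposition \ref{1st prop mastrand}, since there $\mathcal{H}^{Hd}<\infty$). I would instead use the slightly finer gauge $h_2(r)=r^{Hd}\log^{-2}(1/r)$ and build a generalised Cantor set $E_2$ supporting a measure $\mu_2$ with $\mu_2(B(a,r))\asymp h_2(r)$ and $0<\mathcal{H}^{h_2}(E_2)<\infty$; again $\dim(E_2)=Hd$. The gain is that, uniformly in $s$,
$$
\iint \frac{\mu_2(ds)\,\mu_2(dt)}{|s-t|^{Hd}}
\;\asymp\;\int_0^{1}\frac{\mu_2(B(s,r))}{r^{Hd}}\,\frac{dr}{r}
\;\asymp\;\int_0^{1}\frac{dr}{r\,\log^{2}(1/r)}\;<\;\infty ,
$$
so $\mu_2$ has finite Riesz $Hd$-energy and hence $\mathcal{C}^{Hd}(E_2)>0$. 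The lower bound in \eqref{hits pts of B^H regular drift f} then yields $\mathbb{P}\{\exists s\in E_2:(B^{H}+f)(s)=x\}>0$, i.e. points are non-polar.

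The delicate step, and the main obstacle, is the construction of $E_2$: one must engineer the gauge so that the borderline $Hd$-energy, which diverges for the pure power $r^{Hd}$, is rendered finite by the logarithmic correction, while simultaneously keeping the Hausdorff dimension pinned exactly at $Hd$ so that $E_2\subset\mathbf{I}$ does not drift to a larger dimension. Controlling $\mu_2(B(a,r))$ uniformly from above by $h_2(r)$ across all scales, and verifying $0<\mathcal{H}^{h_2}(E_2)<\infty$, is where the work lies; this is carried out by the standard homogeneous Cantor construction with stage counts $N_k$ and interval lengths $\ell_k$ chosen so that $N_k\,h_2(\ell_k)\asymp1$, the existence of a compact set with $0<\mathcal{H}^{h}(\cdot)<\infty$ for a prescribed gauge $h$ being classical. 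Note that, unlike Theorem \ref{theo sharpness}, here one cannot lean on Taylor's theorem \cite{Taylor61} for $E_2$: that construction produces sets of vanishing $\mathcal{H}^{h}$, whereas $\mathcal{C}^{Hd}(E_2)>0$ is incompatible with $\mathcal{H}^{Hd}(E_2)=0$. The hypothesis $Hd<1$ is used precisely to guarantee that all of this takes place within subsets of the unit interval.
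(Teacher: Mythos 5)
Your proposal is correct and takes essentially the same route as the paper: the paper also reads polarity directly off \eqref{hits pts of B^H regular drift f} and, via Lemma \ref{main prop critic hits pts} together with the Cantor-type construction of Appendix B, produces $E_1$ and $E_2$ carrying measures with $\nu_i([a-r,a+r])\asymp r^{Hd}\log^{\pm\beta}(e/r)$, $\beta>1$, which are exactly your gauges up to the choice of logarithmic exponent. The only cosmetic differences are that you take exponent $1$ for $E_1$ and $2$ for $E_2$ (the paper uses a common $\beta>1$), and that the paper pins down $\dim(E_i)=Hd$ via Billingsley's lemma and a packing-number bound rather than through the classical fact that $0<\mathcal{H}^{h}<\infty$ for a regularly varying gauge $h$ of index $Hd$.
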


\noindent The following Lemma is {helpful} in the proof of Proposition \ref{indecid-crital-cas}.

\begin{lemma}\label{main prop critic hits pts}
Let $\alpha\in (0,1)$ and $\beta>1$. {Let $E_1$ and $E_2$ are two Borel subsets of $\,\mathbf{I}$ supporting two probability measures $\nu_1$ and $\nu_2$ respectively, that satisfying}
\begin{equation}\label{modif Dav-Ahlf reg 1}
	\mathsf{c}_1^{-1} r^{\alpha}\log^{\beta}(e/r)\leq \nu_1\left([a-r,a+r]\right)\leq \mathsf{c}_1 r^{\alpha}\log^{\beta}(e/r)\quad \text{ for all $r\in (0,1)$, $a\in E_1$},
\end{equation}
and 
\begin{equation}\label{modif Dav-Ahlf reg 2}
	\mathsf{c}_2^{-1} r^{\alpha}\log^{-\beta}(e/r)\leq \nu_2\left([a-r,a+r]\right)\leq \mathsf{c}_2 r^{\alpha}\log^{-\beta}(e/r)\quad \text{ for all $r\in (0,1)$, $a\in E_2$},
\end{equation}
for some positive constants $\mathsf{c}_1$ and $\mathsf{c}_2$. Then we have $$\dim(E_1)=\dim(E_2)=\alpha$$ and 
\begin{equation}\label{singular sets 1}
	\mathcal{H}^{\alpha}(E_1)=0 \quad\text{and}\quad \mathcal{C}^{\alpha}(E_2)>0. 
\end{equation}
See Appendix B\, for examples of such measures $\nu_1$ and $\nu_2$.
\begin{proof}
	First, let us start by proving that $\dim(E_1)=\dim(E_2)=\alpha$. Indeed, for all $t \in  [0,1]$ and $n\in \mathbb{N}$ we denote by $I_n(t)$ the $n$th generation, half open dyadic interval of the form $[\frac{j-1}{2^n},\frac{j}{2^n})$ containing $t$. Then, by using \eqref{modif Dav-Ahlf reg 1} and \eqref{modif Dav-Ahlf reg 2}, it is easy to check that 
	\begin{equation}
		\lim_{n\rightarrow \infty}\frac{\log \nu_1(I_n(t))}{\log(2^{-n})}=\lim_{n\rightarrow \infty}\frac{\log \nu_2(I_n(t))}{\log(2^{-n})}=\alpha.    
	\end{equation} 
	Therefore, by Billingsley Lemma \cite[Lemma 1.4.1 p. 16]{Bishop Peres} we have $\dim(E_1)=\dim(E_2)=\alpha$. 
	
	\noindent {Now we are going to look at} \eqref{singular sets 1}. Let $r\in (0,1]$ and $P_{|.|}(E_1,r)$ be the packing number of $E_1$.
 The lower bound in \eqref{modif Dav-Ahlf reg 1} leads via
	$$ 
	\mathsf{c}_1^{-1}\,r^{\alpha}\, \log^{\beta}(e/r)\, P_{|.|}(E_1,r)\leq \sum_{j=1}^{P_{|.|}(E_1,r)}\nu_1(I_j)=\nu_1(E_1)=1,
	$$
	{to 
		\[
		P_{|.|}(E_{1},r)\leq\mathsf{c}_{1}^{\,}r^{-\alpha}\,\log^{-\beta}(e/r).
		\]
	}
	So using the well known fact that $N_{|.|}(E_1,2r)\leq P_{|.|}(E_1,r)$, we may deduce that 
	$$N_{|.|}(E_1,r)\leq \mathsf{c}_1\, 2^{\alpha}\,r^{-\alpha}\, \log^{-\beta}(2e/r),\,\, \forall r\in (0,1).$$ 
	Therefore, it is not hard to make out that 
	$$
	\mathcal{H}^{\alpha}(E_1)\leq \, \limsup_{r\rightarrow 0}(2r)^{\alpha}\, N_{|.|}(E_1,r)=0,
	$$
	which gives the first outcome of \eqref{singular sets 1}.
	On the other hand,  to show that $\mathcal{C}^{\alpha}(E_2)>0$ it is sufficient to prove that $$\sup_{t\in E_2}\dint_{E_2}\frac{\nu_2(ds)}{|t-s|^{\alpha}}<\infty.$$ Indeed, we first assume without loss of generality that $\kappa=\operatorname{diam}(E_2)<1$. Now since $\nu_2$  has no atom, then for {any} $t  \in  E_2$ we have 
	\begin{align}\label{estim energy critical set hits pts}
		\dint_{E_2}\frac{\nu_2(ds)}{|t-s|^{\alpha}}=\sum_{j=0}^{\infty}  \dint_{\{s:|t-s|\in (\kappa 2^{-(j+1)},\kappa 2^{-j}]\}}\frac{\nu_2(ds)}{|t-s|^{\alpha}} &\leq \sum_{j=0}^{\infty} \kappa^{-\alpha} 2^{\alpha\,(j+1)}\nu_2\left([t-\kappa\,2^{-j},t+\kappa\,2^{-j}]\right)\nonumber\\
		&\leq 2^{\alpha}\mathsf{c}_2\,\sum_{j=0}^{\infty}\frac{1}{\log^{\beta}(e\,2^j/\kappa)\,}, 
	\end{align}
	which is finite independently of $t$. Hence $\mathcal{E}_{\alpha}(\nu_2)<\infty$ and therefore $\mathcal{C}^{\alpha}(E_2)>0$.
\end{proof}
\end{lemma}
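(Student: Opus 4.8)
The plan is to treat the three conclusions separately, exploiting that $\nu_1$ and $\nu_2$ are Ahlfors--David regular up to a logarithmic correction $\log^{\pm\beta}(e/r)$. On the logarithmic scale this correction is negligible, so it leaves the dimension unchanged at $\alpha$; but at the \emph{critical} exponent $\alpha$ it is exactly what decides the boundary cases, forcing $\mathcal{H}^{\alpha}(E_1)$ to $0$ while keeping $\mathcal{C}^{\alpha}(E_2)$ strictly positive. The hypothesis $\beta>1$ will enter only in the capacity estimate for $E_2$.

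For the dimension I would compute the local dimension of each $\nu_i$ and invoke Billingsley's lemma \cite[Lemma 1.4.1]{Bishop Peres}. From the two-sided mass bounds one gets, for $\nu_i$-a.e.\ $t$,
\[
\lim_{r\to 0}\frac{\log \nu_i([t-r,t+r])}{\log r}
=\lim_{r\to 0}\frac{\alpha\log r \pm \beta\log\log(e/r)+O(1)}{\log r}=\alpha ,
\]
since $\log\log(e/r)=o(\log(1/r))$. Hence $\dim(E_1)=\dim(E_2)=\alpha$.

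For $\mathcal{H}^{\alpha}(E_1)=0$ I would turn the lower mass bound into a covering estimate. Taking a maximal family of disjoint balls $[x_j-r,x_j+r]$ with $x_j\in E_1$ and summing $\nu_1([x_j-r,x_j+r])\ge \mathsf{c}_1^{-1}r^{\alpha}\log^{\beta}(e/r)$ against $\nu_1(E_1)=1$ bounds the packing number by $\mathsf{c}_1\,r^{-\alpha}\log^{-\beta}(e/r)$; since $N_{|.|}(E_1,2r)\le P_{|.|}(E_1,r)$ the covering number satisfies the same bound up to constants. Consequently
\[
\mathcal{H}^{\alpha}(E_1)\le \limsup_{r\to 0}(2r)^{\alpha}N_{|.|}(E_1,r)\lesssim \limsup_{r\to 0}\log^{-\beta}(e/r)=0 ,
\]
which uses only $\beta>0$.

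For $\mathcal{C}^{\alpha}(E_2)>0$ I would show the $\alpha$-energy of $\nu_2$ is finite, which by the Frostman energy method \cite[Theorem 6.4.6]{Bishop Peres} yields positive capacity; it suffices to bound $\sup_{t\in E_2}\int_{E_2}|t-s|^{-\alpha}\,\nu_2(ds)$ uniformly. Assuming $\kappa:=\operatorname{diam}(E_2)<1$ and splitting the integral over the dyadic annuli $|t-s|\in(\kappa 2^{-(j+1)},\kappa 2^{-j}]$, the upper mass bound gives
\[
\int_{E_2}\frac{\nu_2(ds)}{|t-s|^{\alpha}}\le 2^{\alpha}\mathsf{c}_2\sum_{j=0}^{\infty}\log^{-\beta}\!\big(e\,2^{j}/\kappa\big)\lesssim \sum_{j=0}^{\infty}(1+j)^{-\beta},
\]
where I used $\log(e\,2^{j}/\kappa)\ge c(1+j)$ for some $c>0$. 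This last series converges \emph{precisely because} $\beta>1$, and the bound is independent of $t$. This uniform convergence is the crux of the lemma and the only place where $\beta>1$ (rather than the weaker $\beta>0$ sufficient for $E_1$) is indispensable; I expect it to be the main point to get right.
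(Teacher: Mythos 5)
Your proposal is correct and follows essentially the same route as the paper's proof: local dimension plus Billingsley's lemma for $\dim(E_1)=\dim(E_2)=\alpha$, the packing-number bound $P_{|.|}(E_1,r)\lesssim r^{-\alpha}\log^{-\beta}(e/r)$ combined with $N_{|.|}(E_1,2r)\le P_{|.|}(E_1,r)$ for $\mathcal{H}^{\alpha}(E_1)=0$, and the dyadic-annuli energy estimate giving $\sum_j \log^{-\beta}(e\,2^j/\kappa)<\infty$ (where $\beta>1$ is indeed the crux) for $\mathcal{C}^{\alpha}(E_2)>0$. The only cosmetic difference is that you verify the local dimension with centered balls while the paper uses dyadic intervals $I_n(t)$; both feed the same Billingsley lemma and are equivalent here.
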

\begin{proof}[Proof of Proposition \ref{indecid-crital-cas}] 
It is a direct consequence of \eqref{hits pts of B^H regular drift f} and Lemma \ref{main prop critic hits pts} with $\alpha=H d$. 
\end{proof}

Against this background, it is worthy to note that, according to \eqref{hits pts of B^H regular drift f} and Proposition \ref{indecid-crital-cas}, the $H$-Hölder regularity of the drift $f$ is insufficient to guarantee the non-polarity of points for $(B^H+f)\lvert_E$ for a Borel set $E\subset [0,1]$ such that $\dim(E)= Hd$ which implicitly involves the need for a bite of drift roughness. Namely, the drift $f$ will be chosen, as in the previous section, to be ($H-\varepsilon$)-Holder continuous for all $\varepsilon>0$ without reaching order $H$. On the other hand in accordance to Theorem \ref{prop hit point}, for a general measurable drift $f$, a sufficient condition for $(B^H+f)\vert_{E}$ to hits all points is $\mathcal{C}_{\rho_{\mathbf{d}_{H}}}^{d}(Gr_E(f))>0$. The overall point of what follows is to provide some examples of drifts satisfying this last condition with a Borel set $E$ whose distinctive feature is $\dim(E)=Hd$.

Given a slowly varying function at zero $\ell:(0,1]\,\rightarrow \R_+$ we associate to it the  kernel $\Phi_{H,\ell}(\cdot)$ defined as follows
\begin{equation}\label{kernel}
	\Phi_{H,\ell}(r):=r^{-Hd}\,\ell^{-d}(r)\,\left(1+\log\left(1\vee\ell(r)\right)\right)
\end{equation}
%
{Let $\pmb{\theta_H}$ be the normalized regularly varying function defined in \eqref{inv spectr density rep} with a normalized slowly varying part  $L_{\pmb{\theta_H}}$. As previously, we consider the normalized regularly varying function at zero $\delta_{\pmb{\theta_H}}$, with index $H$ and normalized slowly varying part  $\ell_{\delta_{\pmb{\theta_H}}}$, satisfying
\eqref{equiv increm var} and, on the space $(\Omega^{\prime},\mathcal{F}^{\prime},\mathbb{P}^{\prime})$,  the $d$-dimensional Gaussian process with stationary increments $B^{\delta_{\pmb{\theta_H}}}(t):=\left( B^{\delta_{\pmb{\theta_H}}}_1(t),....,B^{\delta_{\pmb{\theta_H}}}_d(t)\right), t\in [0,1]$, where $B^{\delta_{\pmb{\theta_H}}}_1,...,B^{\delta_{\pmb{\theta_H}}}_d$ are $d$ independent copies of $B^{\delta_{\pmb{\theta_H}}}_0$ defined in \eqref{spectral representation}. The following lemma will be useful afterwards

\begin{lemma}\label{estim kernel 1}
	
	There exists a positive constant ${\mathsf{c}}_3$ such that 
	\begin{equation}\label{cap-estimate 1}
		\mathbb{E}'\left[\left(\max\left\lbrace t^H,\lVert B^{\delta_{\pmb{\theta_H}}}(t)\rVert \right\rbrace \right)^{-d}\right]\leq \mathsf{c}_3\, \Phi_{H,\ell_{\delta_{\pmb{\theta_H}}}}(t), \quad \forall t\in (0,t_0),
	\end{equation}
	for some $t_0\in (0,1)$.
\end{lemma}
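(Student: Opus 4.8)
The plan is to reduce the left-hand side to an explicit Gaussian integral and estimate it directly. Since each coordinate of $B^{\delta_{\pmb{\theta_H}}}$ is a centered Gaussian process with stationary increments vanishing at $0$, the vector $B^{\delta_{\pmb{\theta_H}}}(t)$ is centered Gaussian with independent coordinates of common variance $\mathbb{E}'(B^{\delta_{\pmb{\theta_H}}}_0(t))^2=\delta_{\pmb{\theta_H}}^2(t)$. Writing $\sigma_t:=\delta_{\pmb{\theta_H}}(t)=t^H\ell(t)$ with $\ell:=\ell_{\delta_{\pmb{\theta_H}}}$, and passing to polar coordinates, I would express
\begin{equation*}
\mathbb{E}'\left[\max\{t^H,\|B^{\delta_{\pmb{\theta_H}}}(t)\|\}^{-d}\right]=\frac{\omega_d}{(2\pi\sigma_t^2)^{d/2}}\int_0^\infty\frac{r^{d-1}}{\max\{t^H,r\}^d}\,e^{-r^2/(2\sigma_t^2)}\,dr,
\end{equation*}
where $\omega_d$ is the surface area of the unit sphere in $\mathbb{R}^d$. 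I then split the integral at $r=t^H$ into a near part $I_1$ (over $r<t^H$) and a far part $I_2$ (over $r\geq t^H$).

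For the near part, $\max\{t^H,r\}=t^H$ on $r<t^H$, so $I_1=t^{-Hd}\,\mathbb{P}'(\|B^{\delta_{\pmb{\theta_H}}}(t)\|<t^H)$. Bounding the Gaussian density by its peak value gives the scale-invariant estimate $\mathbb{P}'(\|B^{\delta_{\pmb{\theta_H}}}(t)\|<t^H)\leq C\min\{1,(t^H/\sigma_t)^d\}=C\min\{1,\ell(t)^{-d}\}$, whence $I_1\leq C\,t^{-Hd}\ell^{-d}(t)$ after treating the two cases $\ell(t)\geq 1$ and $\ell(t)<1$ separately; in either case this is dominated by $\Phi_{H,\ell}(t)$ since $1+\log(1\vee\ell(t))\geq 1$. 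For the far part, on $r\geq t^H$ I replace $\max\{t^H,r\}^{-d}$ by $r^{-d}$ and substitute $u=r/\sigma_t$ to obtain
\begin{equation*}
I_2=\frac{\omega_d}{(2\pi)^{d/2}\sigma_t^d}\int_{1/\ell(t)}^\infty\frac{e^{-u^2/2}}{u}\,du.
\end{equation*}

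The crux of the argument, and the step matching the precise shape of the kernel $\Phi_{H,\ell}$, is the estimate of this last logarithmic integral. Splitting the range at $u=1$, one has $\int_{1/\ell(t)}^1 u^{-1}\,du=\log(1\vee\ell(t))$ when $\ell(t)\geq 1$ and a tail $\int_1^\infty e^{-u^2/2}\,du$ bounded by an absolute constant, so that $\int_{1/\ell(t)}^\infty u^{-1}e^{-u^2/2}\,du\leq C(1+\log(1\vee\ell(t)))$ uniformly. Since $\sigma_t^d=t^{Hd}\ell^d(t)$, this yields $I_2\leq C\,t^{-Hd}\ell^{-d}(t)(1+\log(1\vee\ell(t)))=C\,\Phi_{H,\ell}(t)$, and adding $I_1$ and $I_2$ gives the claim with $\mathsf{c}_3$ absorbing the dimensional constants. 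The only real subtlety I expect is the uniformity in $t$ of the $\log$-integral bound across the regimes $\ell(t)$ large (where $\limsup_{t\to 0}\ell(t)=+\infty$ by condition \textbf{i.} of \eqref{cond on L_theta}) and $\ell(t)$ bounded away from $0$; both are handled by the case split above, so no restriction beyond $\sigma_t>0$ is needed, and $t_0$ is chosen so that $\ell$ is defined and positive on $(0,t_0)$.
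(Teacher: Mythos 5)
Your proof is correct and follows essentially the same route as the paper's: after reducing the left-hand side to an explicit Gaussian integral, both arguments split according to which term attains the maximum (your split of the radial integral at $r=t^H$ is exactly the paper's split at $\lVert N\rVert=\ell^{-1}(t)$ after standardizing by $\sigma_t$), and both hinge on bounding the same logarithmic integral $\int_{1/\ell(t)}^\infty u^{-1}e^{-u^2/2}\,du$ by cutting it at $u=1$, which produces the factor $1+\log\left(1\vee\ell(t)\right)$ in the kernel $\Phi_{H,\ell}$. The only cosmetic difference is that the paper first rescales to a standard normal vector $N$ and then passes to polar coordinates, whereas you work in polar coordinates with variance $\sigma_t^2$ directly; the estimates are otherwise identical.
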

%
\begin{proof}First we note that, since $B^{\delta_{\pmb{\theta_H}}}(t)$ is a $d$-dimensional Gaussian vector, the term on the left hand side of \eqref{cap-estimate 1} has the same distribution as $t^{-Hd}\left(\max\left\lbrace 1,\ell_{\delta_{\pmb{\theta_H}}}(t)\lVert N\rVert\right\rbrace \right)^{-d}$, where $N$ is a $d$-dimensional standard normal random vector. Due to simple calculations we obtain
	\[
	\begin{array}{l}
		\mathbb{E}'\left[\left(\max\left\lbrace 1,\ell_{\delta_{\pmb{\theta_{H}}}}(t)\lVert N\rVert\right\rbrace \right)^{-d}\right]
		=\mathbb{P}'\left[\lVert N\rVert\leq\ell_{\delta_{\pmb{\theta_{H}}}}^{-1}(t)\right]+\ell_{\delta_{\pmb{\theta_{H}}}}^{-d}(t)\mathbb{E}'\left[\dfrac{1}{\lVert N\rVert^{d}}1\!\!1_{\{\lVert N\rVert>\ell_{\delta_{\pmb{\theta_{H}}}}^{-1}(t)}\right]\\
		= \left(2\pi\right)^{-d/2}\,\left(\dint_{\left\{ \lVert y\rVert\leq\ell_{\delta_{\pmb{\theta_{H}}}}^{-1}(t)\right\} }\,e^{-\lVert y\rVert^{2}/2}dy+\ell_{\delta_{\pmb{\theta_{H}}}}^{-d}(t)\dint_{\left\{ \lVert y\rVert>\ell_{\delta_{\pmb{\theta_{H}}}}^{-1}(t)\right\} }\,\frac{e^{-\lVert y\rVert^{2}/2}}{\lVert y\rVert^{d}}dy\right)\\
		\\
		\leq\mathsf{c}_{4}\,\ell_{\delta_{\pmb{\theta_{H}}}}^{-d}(t)\left(1+\dint_{\ell_{\delta_{\pmb{\theta_{H}}}}^{-1}(t)}^{\infty}\dfrac{e^{-r^{2}/2}}{r}\,dr\right)
		\leq\mathsf{c}_{4}\,\ell_{\delta_{\pmb{\theta_{H}}}}^{-d}(t)\left(1+\dint_{1\wedge\ell_{\delta_{\pmb{\theta_{H}}}}^{-1}(t)}^{1}r^{-1}\,dr+\dint_{1}^{\infty}\frac{e^{-r^{2}/2}}{r}\,dr\right)\\
		\\
		\leq\mathsf{c}_{4}\,\ell_{\delta_{\pmb{\theta_{H}}}}^{-d}(t)\left(1-\log\left(1\wedge\ell_{\delta_{\pmb{\theta_{H}}}}^{-1}(t)\right)\right)\leq\mathsf{c}_{4}\,\ell_{\delta_{\pmb{\theta_{H}}}}^{-d}(t)\left(1+\log\left(1\vee\ell_{\delta_{\pmb{\theta_{H}}}}(t)\right)\right).
	\end{array}
	\]
\end{proof}

\begin{lemma}\label{positive capacity}
	Let $E$ be a Borel set of $[0,1]$. If $\mathcal{C}_{\Phi_{H,\ell_{\delta_{\pmb{\theta_H}}}}}(E)>0$, then $\mathbb{P}'$-almost surely $$\mathcal{C}_{\rho_{\mathbf{d}_{H}}}^{d}\left(Gr_E(B^{\delta_{\pmb{\theta_H}}})\right)>0.$$ 
\end{lemma}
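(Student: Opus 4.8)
The plan is to exhibit, for $\mathbb{P}'$-almost every $\omega'$, an explicit probability measure on the random graph $Gr_E(B^{\delta_{\pmb{\theta_H}}})$ having finite $d$-energy for the parabolic metric $\rho_{\mathbf{d}_{H}}$, obtained by pushing forward a fixed measure on $E$ along the graph map. Since $\mathcal{C}_{\Phi_{H,\ell_{\delta_{\pmb{\theta_H}}}}}(E)>0$, the definition of the kernel capacity furnishes a probability measure $\mu\in\mathcal{P}(E)$ with finite $\Phi_{H,\ell_{\delta_{\pmb{\theta_H}}}}$-energy, namely
$$
\int_E\int_E \Phi_{H,\ell_{\delta_{\pmb{\theta_H}}}}(|t-s|)\,\mu(ds)\,\mu(dt)<\infty .
$$
For each $\omega'$ let $\sigma=\sigma_{\omega'}$ be the image of $\mu$ under the random map $t\mapsto (t,B^{\delta_{\pmb{\theta_H}}}(t))$; this is a probability measure carried by $Gr_E(B^{\delta_{\pmb{\theta_H}}})$.

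The next step is to estimate the expected energy of $\sigma$. By the change-of-variables formula for the pushforward and the definition $\rho_{\mathbf{d}_{H}}((s,x),(t,y))=\max\{|t-s|^H,\|x-y\|\}$,
$$
\mathbb{E}'\left[\mathcal{E}_{\rho_{\mathbf{d}_{H}},d}(\sigma)\right]=\mathbb{E}'\left[\int_E\int_E\frac{\mu(ds)\,\mu(dt)}{\big(\max\{|t-s|^H,\|B^{\delta_{\pmb{\theta_H}}}(s)-B^{\delta_{\pmb{\theta_H}}}(t)\|\}\big)^{d}}\right].
$$
As the integrand is nonnegative, Tonelli's theorem permits interchanging $\mathbb{E}'$ with the double integral. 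Since $B^{\delta_{\pmb{\theta_H}}}$ has stationary increments, $B^{\delta_{\pmb{\theta_H}}}(s)-B^{\delta_{\pmb{\theta_H}}}(t)$ has the same law as $B^{\delta_{\pmb{\theta_H}}}(|t-s|)$, so with $r=|t-s|$ the inner expectation equals $\mathbb{E}'\big[(\max\{r^H,\|B^{\delta_{\pmb{\theta_H}}}(r)\|\})^{-d}\big]$. Lemma \ref{estim kernel 1} bounds this by $\mathsf{c}_3\,\Phi_{H,\ell_{\delta_{\pmb{\theta_H}}}}(r)$ for $r<t_0$, while for $r\ge t_0$ it is trivially at most $t_0^{-Hd}$. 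Splitting the double integral over $\{|t-s|<t_0\}$ and $\{|t-s|\ge t_0\}$, the first part is controlled by $\mathsf{c}_3$ times the finite $\Phi$-energy of $\mu$ and the second by $t_0^{-Hd}$; hence $\mathbb{E}'[\mathcal{E}_{\rho_{\mathbf{d}_{H}},d}(\sigma)]<\infty$.

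Finally, finiteness of the expectation forces $\mathcal{E}_{\rho_{\mathbf{d}_{H}},d}(\sigma)<\infty$ for $\mathbb{P}'$-almost every $\omega'$. For each such $\omega'$, $\sigma_{\omega'}$ is a probability measure on $Gr_E(B^{\delta_{\pmb{\theta_H}}})$ of finite energy, so by the definition \eqref{def cap general metric} of the Bessel-Riesz capacity one obtains $\mathcal{C}_{\rho_{\mathbf{d}_{H}}}^{d}(Gr_E(B^{\delta_{\pmb{\theta_H}}}))\ge \big(\mathcal{E}_{\rho_{\mathbf{d}_{H}},d}(\sigma_{\omega'})\big)^{-1}>0$, which is the assertion. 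I expect the only delicate points to be the justification of the pushforward/Tonelli step (the measurability of $(\omega',s,t)\mapsto \|B^{\delta_{\pmb{\theta_H}}}(s)-B^{\delta_{\pmb{\theta_H}}}(t)\|$ follows from path continuity) and the stationary-increment reduction; the genuine analytic input is entirely packaged in Lemma \ref{estim kernel 1}, so no further hard estimate is needed.
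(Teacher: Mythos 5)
Your proposal is correct and follows essentially the same route as the paper's proof: push the finite $\Phi_{H,\ell_{\delta_{\pmb{\theta_H}}}}$-energy measure forward along the graph map, bound the expected $\rho_{\mathbf{d}_{H}}$-energy via Fubini--Tonelli, stationarity of increments and Lemma \ref{estim kernel 1}, and conclude almost-sure positivity of the capacity. Your explicit splitting of the double integral at $|t-s|=t_0$ is in fact slightly more careful than the paper, which applies Lemma \ref{estim kernel 1} without separating the range $|t-s|\ge t_0$.
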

\begin{proof}
	Firstly, the assumption $\mathcal{C}_{\Phi_{H,\ell_{\delta_{\pmb{\theta_H}}}}}(E)>0$ ensures that there exists a probability measure $\nu$ supported on $E$ with finite energy, i.e.
	$$
	\mathcal{E}_{\Phi_{H,\ell_{\delta_{\pmb{\theta_H}}}}}(\nu)=\dint_E\dint_E \Phi_{H,\ell_{\delta_{\pmb{\theta_H}}}}(|t-s|)\nu(dt)\nu(ds)<\infty.
	$$
	Let $\mu_{\omega^{\prime}}$ be the random measure defined on $Gr_E(B^{\delta_{\pmb{\theta_H}}})$ by 
	$$ 
	\mu_{\omega'}(G):=\nu\{s: (s,B^{\delta_{\theta_0}}(\omega',s))\in G\} \quad \text{ for all } G\subset Gr_E(B^{\delta_{\pmb{\theta_H}}}(\cdot,\omega')).
	$$
	Hence $\mathbb{P}'$-almost surely we have
	\[
	\begin{array}{c}
		\mathcal{E}_{\rho_{\mathbf{d}_{H}},d}(\mu_{\omega'}):=\\ 
		\\	\dint_{Gr_{E}(B^{\delta_{\pmb{\theta_{H}}}})}\dint_{Gr_{E}(B^{\delta_{\pmb{\theta_{H}}}})}\frac{1}{\max\left\{ \lvert t-s\lvert^{Hd},\lVert B^{\delta_{\pmb{\theta_{H}}}}(t)-B^{\delta_{\pmb{\theta_{H}}}}(s)\rVert^{d}\right\} }d\mu_{\omega'}(s,B^{\delta_{\pmb{\theta_{H}}}}(s))d\mu_{\omega'}(t,B^{\delta_{\pmb{\theta_{H}}}}(t))\\
		\\
		=\dint_{E}\dint_{E}\frac{1}{\max\left\{ \lvert t-s\lvert^{Hd},\lVert B^{\delta_{\pmb{\theta_{H}}}}(t)-B^{\delta_{\pmb{\theta_{H}}}}(s)\rVert^{d}\right\} }\nu(ds)\nu(dt).
	\end{array}
	\]
	Therefore, in order to achieve the goal it is sufficient to show that $\mathcal{E}_{\rho_{\mathbf{d}_{H}},d}(\mu_{\omega'})<\infty$ for $\mathbb{P}'$-almost surely, which can be done by checking  $\mathbb{E}'\left[\mathcal{E}_{\rho_{\mathbf{d}_{H}},d}(\mu_{\omega'})\right]<\infty$. Indeed, using Fubini's theorem with the stationarity of increments and Lemma \ref{estim kernel 1} we obtain 
	\begin{align}\label{trans-fub-stat increm}
		\begin{aligned}
			\mathbb{E}'\left[ \mathcal{E}_{\rho_{\mathbf{d}_{H}},d}(\mu_{\omega'})\right] 
			&= \mathbb{E}'\left[\dint_{E}\dint_{E}\frac{1}{\max\left\{ \lvert t-s\lvert^{Hd},\lVert B^{\delta_{\pmb{\theta_{H}}}}(t)-B^{\delta_{\pmb{\theta_{H}}}}(s)\rVert^{d}\right\} }\nu(ds)\nu(dt)\right]\\
			&=\dint_E\dint_E \mathbb{E}'\left[\frac{1}{\max\left\{\lvert t-s\lvert^{Hd}, \lVert B^{\delta_{\pmb{\theta_{H}}}}(|t-s|)\rVert^d \right\}}\right]\nu(ds)\nu(dt)\\
			&\leq \mathsf{c}_3\,\mathcal{E}_{\Phi_{H,\ell_{\delta_{\pmb{\theta_H}}}}}(\nu)<\infty.
		\end{aligned}
	\end{align}
	Thus $\mathcal{C}_{\rho_{\mathbf{d}_{H}}}^{d}\left(Gr_E(B^{\delta_{\pmb{\theta_H}}})\right)>0$  $\mathbb{P}'$-almost surely. 
\end{proof}
\begin{remark}
Notice that in both of Lemmas \ref{estim kernel 1} and \ref{positive capacity} we lose nothing by changing $\ell_{\delta_{\theta_H}}(\cdot)$ by $\ell_{\theta_H}(\cdot)=\mathsf{c}_{H}\,L_{\theta_{H}}^{-1/2}(1/\cdot)$, due to the fact that $\ell_{\delta_{\theta_H}}(h)\thicksim \ell_{\theta_H}(h)$ as $h\rightarrow 0$.  
\end{remark}
The following result is the consequence of the two previous lemmas.

\begin{proposition}\label{prop cont drift 1}
	Let $\{B^{H}(t) : t \in [0,1]\}$ be a $d$-dimensional fractional Brownian motion of Hurst index $H\in (0,1)$. If $\mathcal{C}_{\Phi_{H,\ell_{\delta_{\pmb{\theta_H}}}}}(E)>0$, then there exists a continuous function $f\in C^{\mathsf{w}_{H,{\ell_{\theta_H}}}}(\mathbf{I})\setminus C^{H}\left(\mathbf{I}\right)$ such that
	\begin{align}
		\mathbb{P}\left\{\exists t\in E : (B^{H}+f)(t)=x\right\}>0,
	\end{align}
	for all $x\in \mathbb{R}^d$.
\end{proposition}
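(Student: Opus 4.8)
The plan is to produce the drift $f$ by selecting a suitable sample path of the auxiliary process $B^{\delta_{\pmb{\theta_H}}}$ and then to extract the hitting conclusion from the capacity lower bound of Theorem \ref{prop hit point}. I work on the independent space $(\Omega',\mathcal{F}',\mathbb{P}')$ carrying $B^{\delta_{\pmb{\theta_H}}}$ and introduce the two events
$$
A_1=\left\{\omega'\in\Omega':\ \mathcal{C}_{\rho_{\mathbf{d}_{H}}}^{d}\!\left(Gr_E(B^{\delta_{\pmb{\theta_H}}}(\cdot,\omega'))\right)>0\right\},\qquad
A_2=\left\{\omega'\in\Omega':\ B^{\delta_{\pmb{\theta_H}}}(\cdot,\omega')\in C^{\mathsf{w}_{H,\ell_{\theta_H}}}(\mathbf{I})\setminus C^{H}(\mathbf{I})\right\}.
$$
The hypothesis $\mathcal{C}_{\Phi_{H,\ell_{\delta_{\pmb{\theta_H}}}}}(E)>0$ is exactly the input of Lemma \ref{positive capacity}, which gives $\mathbb{P}'(A_1)=1$. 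The proof then reduces to showing $\mathbb{P}'(A_2)=1$, after which $A_1\cap A_2$ has full measure, hence is nonempty, and any path drawn from it furnishes the required $f$.

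For $\mathbb{P}'(A_2)=1$, membership in $C^{\mathsf{w}_{H,\ell_{\theta_H}}}(\mathbf{I})$ is the modulus-of-continuity statement recorded for $B^{\delta_{\pmb{\theta_H}}}$ (Theorem \ref{modulus of continuity}), using $\ell_{\delta_{\pmb{\theta_H}}}\sim\ell_{\theta_H}$. The point needing care is the exclusion from $C^{H}(\mathbf{I})$. Here I would invoke the two-sided (exact) uniform modulus of continuity of $B^{\delta_{\pmb{\theta_H}}}$: there is a deterministic constant $C_0>0$ such that, $\mathbb{P}'$-a.s.,
$$
\lim_{h\downarrow 0}\ \sup_{\substack{s,t\in\mathbf{I}\\ 0<t-s\le h}}\frac{\|B^{\delta_{\pmb{\theta_H}}}(t)-B^{\delta_{\pmb{\theta_H}}}(s)\|}{\mathsf{w}_{H,\ell_{\theta_H}}(h)}=C_0 ,\qquad \mathsf{w}_{H,\ell_{\theta_H}}(h)=h^{H}\ell_{\theta_H}(h)\log^{1/2}(1/h).
$$
By condition \eqref{cond modulus cont} one may pick a scale $h_n\downarrow 0$ along which $\ell_{\theta_H}(h_n)\log^{1/2}(1/h_n)\to+\infty$; for each such $n$ the displayed limit yields points $s_n,t_n\in\mathbf{I}$ with $0<t_n-s_n\le h_n$ and $\|B^{\delta_{\pmb{\theta_H}}}(t_n)-B^{\delta_{\pmb{\theta_H}}}(s_n)\|\ge \tfrac{C_0}{2}\,h_n^{H}\ell_{\theta_H}(h_n)\log^{1/2}(1/h_n)$. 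Dividing by $(t_n-s_n)^{H}\le h_n^{H}$ drives the $H$-Hölder seminorm to $+\infty$, so a.s. the path is not $H$-Hölder; hence $\mathbb{P}'(A_2)=1$.

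With $\mathbb{P}'(A_1\cap A_2)=1$, I fix any $\omega_0'\in A_1\cap A_2$ and set $f:=B^{\delta_{\pmb{\theta_H}}}(\cdot,\omega_0')$. This $f$ is continuous (so bounded and Borel on $\mathbf{I}$), belongs to $C^{\mathsf{w}_{H,\ell_{\theta_H}}}(\mathbf{I})\setminus C^{H}(\mathbf{I})$, and satisfies $\mathcal{C}_{\rho_{\mathbf{d}_{H}}}^{d}(Gr_E(f))>0$. Since it is now a deterministic function independent of $B^H$, Theorem \ref{prop hit point} applies directly: for every $x\in\mathbb{R}^d$, taking $M\ge\|x\|_\infty$ gives
$$
\mathbb{P}\left\{\exists\,t\in E:\ (B^H+f)(t)=x\right\}\ \ge\ \mathbf{c}_1^{-1}\,\mathcal{C}_{\rho_{\mathbf{d}_{H}}}^{d}(Gr_E(f))\ >\ 0 ,
$$
which is the assertion. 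The only genuinely delicate ingredient is the a.s. exclusion $B^{\delta_{\pmb{\theta_H}}}\notin C^H(\mathbf{I})$: it requires the sharp lower half of the modulus of continuity (not merely the membership $B^{\delta_{\pmb{\theta_H}}}\in C^{\mathsf{w}_{H,\ell_{\theta_H}}}$) together with the choice of scale $h_n$ along the $\limsup$ supplied by \eqref{cond modulus cont}; everything else is a direct combination of Lemma \ref{positive capacity} with the lower bound in Theorem \ref{prop hit point}.
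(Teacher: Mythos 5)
Your core argument is the same as the paper's: Lemma \ref{positive capacity} turns the hypothesis $\mathcal{C}_{\Phi_{H,\ell_{\delta_{\theta_H}}}}(E)>0$ into $\mathcal{C}_{\rho_{\mathbf{d}_{H}}}^{d}\bigl(Gr_E(B^{\delta_{\theta_H}}(\cdot,\omega'))\bigr)>0$ for $\mathbb{P}'$-a.e.\ $\omega'$, Theorem \ref{modulus of continuity} puts $\mathbb{P}'$-a.e.\ trajectory in $C^{\mathsf{w}_{H,\ell_{\theta_H}}}(\mathbf{I})$, and Theorem \ref{prop hit point}, applied to the now deterministic bounded Borel drift $f=B^{\delta_{\theta_H}}(\cdot,\omega_0')$ with $M\geq \lVert x\rVert_{\infty}$, yields the positive hitting probability for every $x$; this is exactly how the paper proceeds. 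Where you differ is that you also prove the exclusion $f\notin C^{H}(\mathbf{I})$, a part of the statement that the paper's own proof silently skips (it just says ``choose $f$ among the trajectories''). Your care here is warranted, but the tool you invoke for it --- a two-sided exact uniform modulus, $\lim_{h\downarrow 0}\sup_{|t-s|\le h}\lVert B^{\delta_{\theta_H}}(t)-B^{\delta_{\theta_H}}(s)\rVert/\mathsf{w}_{H,\ell_{\theta_H}}(h)=C_0>0$ $\mathbb{P}'$-a.s. --- is not available inside the paper: Theorem \ref{modulus of continuity} gives only the upper half, and no lower modulus estimate is proved anywhere, so as written this step rests on an outside citation (such exact-modulus results do exist for this class of stationary-increment Gaussian processes, e.g.\ in Marcus--Rosen, so this is a reparable lean rather than a fatal error). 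A lighter argument closes the point with no modulus lower bound at all: for fixed $K\in\mathbb{N}$ pick $h_n\downarrow 0$ with $\ell_{\theta_H}(h_n)\to\infty$, which is possible since $\ell_{\theta_H}(h)=\mathsf{c}_H^{1/2}L_{\theta_H}^{-1/2}(1/h)$ and $\liminf_{x\to\infty}L_{\theta_H}(x)=0$ by assertion \textbf{i.}\ of \eqref{cond on L_theta}; fix $s_0$ interior to $\mathbf{I}$ and observe that the event that the path is $H$-H\"older with constant $K$ is contained, for every $n$, in $\bigl\{\lVert B^{\delta_{\theta_H}}(s_0+h_n)-B^{\delta_{\theta_H}}(s_0)\rVert\le K h_n^{H}\bigr\}$, whose probability equals $\mathbb{P}'\bigl\{\lVert N\rVert\le K/\ell_{\delta_{\theta_H}}(h_n)\bigr\}\to 0$ (with $N$ a standard $d$-dimensional Gaussian vector, using stationarity of increments and $\ell_{\delta_{\theta_H}}\sim\ell_{\theta_H}$); hence each such event is null, and the countable union over $K$ shows the path is a.s.\ not in $C^{H}(\mathbf{I})$, giving $\mathbb{P}'(A_2)=1$ from the paper's one-sided Theorem \ref{modulus of continuity} alone. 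With that substitution (or an explicit citation for the exact modulus), your proof is complete and coincides with the paper's, while being more faithful to the full statement than the paper's own write-up.
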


\begin{proof} We start the proof by recalling that Theorem \ref{modulus of continuity} provides the modulus of continuity of $B^{\delta_{\pmb{\theta_H}}}$ that is $B^{\delta_{\pmb{\theta_H}}}\in C^{\mathsf{w}_{H, {\ell_{\theta_H}}}}(\mathbf{I})$, $\mathbb{P}'$-almost surely. Now applying Theorem \ref{prop hit point} we deduce that for $\mathbb{P}'$-almost surely there is a positive random constant $C=C(\omega')>0$ such that 
	$$
	\mathbb{P}\left\{ \exists s\in : (B^H+B^{\delta_{\pmb{\theta_H}}}(\omega'))(s)=x \right\}\geq C\, \mathcal{C}_{\rho_{\mathbf{d}_{H}}}^{d}\left(Gr_E\,B^{\delta_{\pmb{\theta_H}}}(\cdot,\omega')\right)>0.
	$$
	Hence, by choosing $f$ to be one of the trajectories of $B^{\delta_{\pmb{\theta_H}}}$ we get the desired result.
\end{proof}
\begin{remark}
It is worthwhile mentioning that $C^{\mathsf{w}_{H,{\ell_{\theta_H}}}}(\mathbf{I})$ verifies \eqref{compar spaces} as \,${{\ell_{\theta_H} (\cdot)}}$ meets the condition  \eqref{cond modulus cont} via the assertion \textbf{i.} of \eqref{cond on L_theta}.
\end{remark}

\noindent Consequently we have the following outcome

\begin{corollary}\label{prop cont drift 2}
	Let $\{B^{H}(t) : t \in [0,1]\}$ be a $d$-dimensional fractional Brownian motion of Hurst index $H\in (0,1)$. Assume that $L_{\pmb{\theta_H}}$, the normalized slowly varying part  of $\pmb{\theta_H}$, satisfies 
\eqref{cond on L_theta}. Then there exist a Borel set $E\subset \mathbf{I}$, such that 
$$
\dim(E)=Hd \quad \text{ and }\quad \mathcal{H}^{Hd}(E)=0,
$$
and a function $f\in C^{\mathsf{w}_{H,{\ell_{\theta_H}}}}(\mathbf{I})\setminus C^{H}\left(\mathbf{I}\right)$, such that all points are non-polar for $(B^H+f)\vert_E$.
\end{corollary}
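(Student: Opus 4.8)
The plan is to exhibit a single compact set $E\subset\mathbf{I}$ that is simultaneously $\mathcal{H}^{Hd}$-null and of positive $\Phi_{H,\ell_{\delta_{\pmb{\theta_H}}}}$-capacity, and then to feed $E$ into Proposition \ref{prop cont drift 1} to produce the drift $f$. The construction of $E$ is entirely analogous to the one carried out in the proof of Theorem \ref{theo sharpness}, now performed at the critical exponent with the measure function $h(t)=t^{Hd}$ and the kernel \eqref{kernel}.

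First I would invoke Taylor's theorem \cite[Theorem 4]{Taylor61} with $h(t)=t^{Hd}$ and the capacity kernel $\Phi(t)=\Phi_{H,\ell_{\delta_{\pmb{\theta_H}}}}(t)$. By the asymptotic equivalence $\ell_{\theta_H}(h)\thicksim\ell_{\delta_{\theta_H}}(h)$ from \eqref{slow var lim} it is enough to control $\Phi_{H,\ell_{\theta_H}}$, and the two hypotheses of Taylor's theorem are checked exactly as in Theorem \ref{theo sharpness}: for any fixed $p>Hd$ the map
\[
r\longmapsto r^{p}\,\Phi_{H,\ell_{\theta_H}}(r)=r^{p-Hd}\,\ell_{\theta_H}^{-d}(r)\bigl(1+\log(1\vee\ell_{\theta_H}(r))\bigr)
\]
is a positive power times a slowly varying factor, hence increasing and vanishing near the origin, while
\[
\lim_{t\downarrow 0}t^{Hd}\Phi_{H,\ell_{\theta_H}}(t)=\lim_{t\downarrow 0}\ell_{\theta_H}^{-d}(t)\bigl(1+\log(1\vee\ell_{\theta_H}(t))\bigr)=0,
\]
the last equality because condition \eqref{cond on L_theta} forces $\ell_{\theta_H}(t)=\mathsf{c}_H^{1/2}L_{\theta_H}^{-1/2}(1/t)\to+\infty$ as $t\downarrow 0$ and $u^{-d}(1+\log(1\vee u))\to 0$ as $u\to\infty$. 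Taylor's theorem then yields a compact set $E\subset\mathbf{I}$ with $\mathcal{H}^{Hd}(E)=0$ and $\mathcal{C}_{\Phi_{H,\ell_{\delta_{\pmb{\theta_H}}}}}(E)>0$.

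Next I would read off the dimension. The relation $\mathcal{H}^{Hd}(E)=0$ gives $\dim(E)\le Hd$. For the reverse inequality, the slowly varying correction in $\Phi_{H,\ell_{\theta_H}}$ is dominated by any power, so for each $\epsilon\in(0,Hd)$ one has $\Phi_{H,\ell_{\delta_{\pmb{\theta_H}}}}(r)\ge c_{\epsilon}\,r^{-(Hd-\epsilon)}$ near the origin; hence a measure of finite $\Phi_{H,\ell_{\delta_{\pmb{\theta_H}}}}$-energy on $E$ has finite $(Hd-\epsilon)$-Riesz energy, which forces $\mathcal{C}^{Hd-\epsilon}(E)>0$ and so $\dim(E)\ge Hd-\epsilon$. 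Letting $\epsilon\downarrow 0$ gives $\dim(E)=Hd$. Finally, since $\mathcal{C}_{\Phi_{H,\ell_{\delta_{\pmb{\theta_H}}}}}(E)>0$, Proposition \ref{prop cont drift 1} supplies a function $f\in C^{\mathsf{w}_{H,\ell_{\theta_H}}}(\mathbf{I})\setminus C^{H}(\mathbf{I})$ with $\mathbb{P}\{\exists\,t\in E:(B^H+f)(t)=x\}>0$ for every $x\in\mathbb{R}^d$, i.e. every point is non-polar for $(B^H+f)\vert_E$, which is exactly the assertion with the set $E$ just built.

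The main obstacle is the verification of Taylor's hypotheses for this particular kernel, and specifically checking that the construction produces a genuinely $\mathcal{H}^{Hd}$-null set rather than one of merely finite measure while preserving positive $\Phi$-capacity. This rests on two competing features of $\Phi_{H,\ell}$: the divergence $\ell_{\theta_H}(t)\to+\infty$ supplied by \eqref{cond on L_theta}, which drives $t^{Hd}\Phi_{H,\ell_{\theta_H}}(t)\to 0$, and the logarithmic factor $1+\log(1\vee\ell)$ — precisely the correction dictated by the Gaussian energy bound of Lemma \ref{estim kernel 1} — which must be shown to be subdominant so as not to spoil that limit. Everything else reduces to routine bookkeeping with normalized slowly varying functions.
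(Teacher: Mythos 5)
Your proposal follows the paper's own proof in all essentials: apply Taylor's Theorem 4 with $h(t)=t^{Hd}$ and the kernel $\Phi_{H,\ell_{\delta_{\pmb{\theta_H}}}}$ from \eqref{kernel} to produce a compact $E\subset\mathbf{I}$ with $\mathcal{H}^{Hd}(E)=0$ and $\mathcal{C}_{\Phi_{H,\ell_{\delta_{\pmb{\theta_H}}}}}(E)>0$, then feed $E$ into Proposition \ref{prop cont drift 1} to obtain $f\in C^{\mathsf{w}_{H,\ell_{\theta_H}}}(\mathbf{I})\setminus C^{H}(\mathbf{I})$. Your extra paragraph establishing $\dim(E)=Hd$ (upper bound from the null $Hd$-measure, lower bound from positive capacity against a kernel dominating $c_{\epsilon}r^{-(Hd-\epsilon)}$, via Frostman) is correct and is in fact a step the paper's proof leaves implicit, so that addition is welcome.

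There is, however, one incorrect deduction in your verification of Taylor's hypotheses. You assert that condition \eqref{cond on L_theta} \emph{forces} $\ell_{\theta_H}(t)=\mathsf{c}_H^{1/2}L_{\theta_H}^{-1/2}(1/t)\to+\infty$ as $t\downarrow 0$, and hence that $t^{Hd}\Phi_{H,\ell_{\theta_H}}(t)\to 0$ as a genuine limit. But \eqref{cond on L_theta} only assumes $\liminf_{x\rightarrow+\infty}L_{\pmb{\theta_H}}(x)=0$ together with $\limsup_{x\rightarrow+\infty}L_{\pmb{\theta_H}}(x)<+\infty$; the function $L_{\pmb{\theta_H}}$ may oscillate and return to values of order one along some sequence, in which case $\ell_{\theta_H}$ does not diverge. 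What the hypothesis actually yields is $\limsup_{t\downarrow 0}\ell_{\theta_H}(t)=+\infty$, hence only $\liminf_{t\downarrow 0}t^{Hd}\Phi_{H,\ell_{\theta_H}}(t)=0$. This is exactly what the paper's proof derives (via $\limsup_{x\rightarrow 0}\ell_{\delta_{\pmb{\theta_H}}}(x)=\limsup_{x\rightarrow 0}\ell_{\pmb{\theta_H}}(x)=+\infty$) before invoking Taylor's Theorem 4, whose hypothesis is precisely this liminf condition; so the application of Taylor remains legitimate, but you should replace your limit claim by the corresponding liminf statement rather than pretend the full limit holds. A second, more minor point: "a positive power times a slowly varying factor, hence increasing near the origin" is not valid for arbitrary slowly varying functions (regular variation does not imply eventual monotonicity); it does hold here because the slowly varying functions in play are \emph{normalized}, so the derivative term $r\,S'(r)/S(r)$ tends to $0$ and is absorbed by the exponent $p-Hd>0$ — worth saying explicitly. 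With these two repairs your argument coincides with the paper's proof.
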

\begin{remark}
	Similarly to Theorem \ref{theo sharpness}, Corollary \ref{prop cont drift 2} confirms also the sharpness of the H\"older regularity assumption made on the drift $f$ in \eqref{hits pts of B^H regular drift f}. 
\end{remark}
\begin{proof}[Proof of Corollary \ref{prop cont drift 2} ]
	The condition \ref{ppty of slow var part} with $\underset{x\rightarrow +\infty}{\liminf} \,L_{\pmb{\theta_H}}(x)=0$ 
 imply that 
 $$
 \underset{x\rightarrow 0}{\limsup} \,\ell_{\delta_{\pmb{\theta_H}}}(x)= †\underset{x\rightarrow 0}{\limsup} \,\ell_{\pmb{\theta_H}}(x)=+\infty.
 $$ 
 Thus applying once again Theorem 4 in \cite{Taylor61} with the functions $h(t):=t^{H d}$ and  $\Phi(t)=\Phi_{H,\ell_{\delta_{\pmb{\theta_H}}}}(t)$ we infer that there exists a compact set $E\subset \mathbf{I}$ such that  
	$$
	\mathcal{H}^{H d}(E)=0 \quad \text{ and } \quad \mathcal{C}_{\Phi_{H,\ell_{\delta_{\pmb{\theta_H}}}}}(E)>0.
	$$
	Finally Proposition \ref{prop cont drift 1} gives us the function that we are looking for, that is $f\in C^{\mathsf{w}_{H,{\ell_{\theta_H}}}}(\mathbf{I})$  for which $(B^H+f)\vert_E$ hits all points.
\end{proof}
}

\section{Appendixes}


\subsection{Appendix A}
In this section, we would like to investigate existence of a Gaussian process with stationary increments $B^{\delta_{\theta}}$ with increments variance $\delta_{\theta}(\cdot)$, we also provides the uniform modulus of continuity for $B^{\delta_{\theta}}$. 
First, let $\alpha \in (0,1)$ and $\theta_{\alpha}:\mathbb{R}_+\rightarrow \mathbb{R}_+$ be a $\mathcal{C}^{\infty}$ normalized regularly varying function at infinity with index $2\alpha+1$ in the sens of Karamata of the form
\begin{align}\label{inv spectr density rep}
\theta_{\alpha}(x)=x^{2\alpha+1}\, L_{\theta_{\alpha}}(x),
\end{align}
with $L_{\theta_{\alpha}}(\cdot)$ is the normalized slowly varying part given as follows 
\begin{align}\label{slow var part of teta}
L_{\theta_{\alpha}}(x)=\mathtt{c}_1\exp\left(\dint_{x_0}^x\frac{\varepsilon_{\theta_{\alpha}}(t)}{t}dt\right) \quad \text{for all $x\geq x_0$},
\end{align}
and $L_{\theta_{\alpha}}(x)=\mathtt{c}_1$ for all $x\in (0,x_0)$, where $\mathtt{c}_1$ is a positive constant. It is quit simple to check that 
$$ 
\varepsilon_{\theta_{\alpha}}(x)=x\,L_{\theta_{\alpha}}^{\prime}(x)/L_{\theta_{\alpha}}(x) \quad \text{for all $x\geq x_0$}. 
$$
Let $\delta_{\theta_{\alpha}}: \mathbb{R}_+\rightarrow \mathbb{R}_+$ be the continuous function defined by 
\begin{align}\label{increments variance}
\delta^2_{\theta_{\alpha}}(h):=\frac{2}{\pi}\dint_{0}^{\infty}(1-\cos(x\,h))\frac{dx}{\theta_{\alpha}(x)}=\frac{4}{\pi}\dint_{0}^{\infty}\sin^2\left(\frac{x\,h}{2}\right)\frac{dx}{\theta_{\alpha}(x)}.
\end{align}
The special properties of the function $\theta_{\alpha}$ make it easy to verify that $\delta_{\theta_{\alpha}}$ is well-defined. Moreover, it follows from \cite[Theorem 7.3.1]{Marcus-Rosen} that $\delta^2_{\theta_{\alpha}}$ is a normalised regularly varying function at zero with index $2\alpha$ such that
\begin{align}\label{equiv increm var}
{\delta^2_{\theta_{\alpha}}(h)}\thicksim {h^{2\alpha}\,\ell^2_{\theta_{\alpha}}(h)} \quad \text{ as $h\rightarrow 0$},
\end{align}
where 
\begin{align}\label{cst of equiv slow var fcts}
{\ell_{\theta_{\alpha}}(h):=\mathtt{c}^{1/2}_{\alpha}\,L^{-1/2}_{\theta_{\alpha}}(1/h)}\quad \text{ and }\quad \mathtt{c}_{\alpha}:=\frac{4}{\pi}\dint_0^{\infty}\frac{\sin^2 s/2}{s^{2\alpha+1}}ds
\end{align}
Hence $\delta_{\theta_{\alpha}}$ is normalised regularly varying at zero of index $\alpha$, whence there exists $\ell_{\delta_{\theta_{\alpha}}}:(0,1]\rightarrow \mathbb{R}_+$ be normalised slowly varying at zero such that $\delta_{\theta_{\alpha}}(h)=h^{\alpha}\,\ell_{\delta_{\theta_{\alpha}}}(h)=\mathsf{v}_{\alpha,\ell_{\delta_{\theta_{\alpha}}}}(h)$ for all $h\in [0,1]$. Therefore \eqref{equiv increm var} ensures that 
\begin{align}\label{ppty of slow var part}
\quad \ell_{\delta_{\theta_{\alpha}}}(h) \thicksim \ell_{\theta_{\alpha}}(h)=\mathsf{c}_{\alpha}^{1/2} L_{\theta_{\alpha}}^{-1/2}(1/h)\quad \text{as $h \rightarrow 0$}.
\end{align}

In the following, we give a method for constructing real Gaussian centered processes with stationary increments such that their increments variance are given by $\delta_{\theta_{\alpha}}^2$. 
\begin{proposition}\label{Construc of stat increm process}
Let $\alpha \in (0,1)$ and $\theta_{\alpha}$ be the normalised regularly varying function given in \eqref{inv spectr density rep}. Then there exists a one-dimensional centered Gaussian process $B_0^{\delta_{\theta_{\alpha}}}$ on $\mathbb{R}_+$ such that for all 
\begin{align}
	\delta_{\theta_{\alpha}}^2(h)=\mathbb{E}\left(B_0^{\delta_{\theta_{\alpha}}}(t+h)-B_0^{\delta_{\theta_{\alpha}}}(t)\right)^2 \quad \text{ for all $t\geq 0$ and $h\geq 0$}.
\end{align}
\end{proposition}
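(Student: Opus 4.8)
The plan is to realize $B_0^{\delta_{\theta_\alpha}}$ through a real harmonizable (spectral) stochastic integral whose spectral density is $1/\theta_\alpha$, and then to read off the prescribed increment variance and the stationarity of the increments directly from the integrand. Concretely, on a suitable probability space I would take two independent Gaussian white noises $W_1,W_2$ on $(0,\infty)$ with Lebesgue control measure and set
\begin{equation*}
B_0^{\delta_{\theta_\alpha}}(t):=\dint_0^\infty \frac{\cos(t\lambda)-1}{\sqrt{\pi\,\theta_\alpha(\lambda)}}\,W_1(d\lambda)+\dint_0^\infty \frac{\sin(t\lambda)}{\sqrt{\pi\,\theta_\alpha(\lambda)}}\,W_2(d\lambda),\qquad t\ge 0.
\end{equation*}
Being a Wiener integral of deterministic kernels, this is a centered Gaussian process with $B_0^{\delta_{\theta_\alpha}}(0)=0$, and it is manifestly real-valued.

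The first point to check is that the two kernels lie in $L^2((0,\infty),d\lambda)$, so that the integral is well defined for each $t$. By the It\^o isometry and independence of $W_1,W_2$, the variance of $B_0^{\delta_{\theta_\alpha}}(t)$ equals $\frac1\pi\dint_0^\infty \frac{(\cos(t\lambda)-1)^2+\sin^2(t\lambda)}{\theta_\alpha(\lambda)}\,d\lambda$, and the elementary identity $(\cos\phi-1)^2+\sin^2\phi=2(1-\cos\phi)$ turns this into $\frac{2}{\pi}\dint_0^\infty(1-\cos(t\lambda))\frac{d\lambda}{\theta_\alpha(\lambda)}=\delta_{\theta_\alpha}^2(t)$. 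Hence finiteness of the integrand is exactly the finiteness of $\delta_{\theta_\alpha}^2(t)$ in \eqref{increments variance}, which in turn follows from the regular variation of $\theta_\alpha$ in \eqref{inv spectr density rep}: near $\lambda=0$ one has $\theta_\alpha(\lambda)=\mathtt{c}_1\lambda^{2\alpha+1}$, so $1/\theta_\alpha(\lambda)\asymp\lambda^{-(2\alpha+1)}$ and the bound $1-\cos(t\lambda)\le t^2\lambda^2/2$ gives integrability at the origin since $1-2\alpha>-1$, while at infinity $1-\cos\le 2$ and $1/\theta_\alpha$ is integrable because the index $2\alpha+1>1$. This is precisely the ``special properties'' that make $\delta_{\theta_\alpha}$ well defined.

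Next I would verify the increment structure. For $h\ge 0$ the increment $B_0^{\delta_{\theta_\alpha}}(t+h)-B_0^{\delta_{\theta_\alpha}}(t)$ is the integral of the kernels $\frac{\cos((t+h)\lambda)-\cos(t\lambda)}{\sqrt{\pi\theta_\alpha(\lambda)}}$ and $\frac{\sin((t+h)\lambda)-\sin(t\lambda)}{\sqrt{\pi\theta_\alpha(\lambda)}}$, so its variance is $\frac1\pi\dint_0^\infty\frac{[\cos((t+h)\lambda)-\cos(t\lambda)]^2+[\sin((t+h)\lambda)-\sin(t\lambda)]^2}{\theta_\alpha(\lambda)}\,d\lambda$; the identity $(\cos A-\cos B)^2+(\sin A-\sin B)^2=2(1-\cos(A-B))$ with $A-B=h\lambda$ collapses the numerator to $2(1-\cos(h\lambda))$, yielding exactly $\delta_{\theta_\alpha}^2(h)$, a quantity depending on $h$ only. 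To confirm genuine stationarity of the increments I would compute the covariance of $B_0^{\delta_{\theta_\alpha}}(t+h)-B_0^{\delta_{\theta_\alpha}}(t)$ with $B_0^{\delta_{\theta_\alpha}}(s+k)-B_0^{\delta_{\theta_\alpha}}(s)$: the cross $W_1$--$W_2$ terms vanish by independence, and $\cos A\cos C+\sin A\sin C=\cos(A-C)$ reduces the integrand to $\cos((t+h-s-k)\lambda)-\cos((t+h-s)\lambda)-\cos((t-s-k)\lambda)+\cos((t-s)\lambda)$ over $\pi\theta_\alpha(\lambda)$, which depends on $(t-s,h,k)$ only through $t-s$. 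Shift-invariance of all increment covariances, together with Gaussianity, gives stationarity of increments.

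The only genuinely analytic step is the $L^2$/integrability estimate of the second paragraph; everything else is an application of the Wiener-integral isometry and elementary trigonometry, so I expect no real obstacle. An equivalent, more abstract route would avoid the explicit integral: the representation \eqref{increments variance} exhibits $\delta_{\theta_\alpha}^2$ as $\dint_0^\infty(1-\cos(h\lambda))\,\nu(d\lambda)$ with $\nu(d\lambda)=\frac{2}{\pi\theta_\alpha(\lambda)}\,d\lambda$, so $\delta_{\theta_\alpha}^2$ is a continuous conditionally negative definite function vanishing at $0$; hence the kernel $R(s,t):=\tfrac12\big(\delta_{\theta_\alpha}^2(s)+\delta_{\theta_\alpha}^2(t)-\delta_{\theta_\alpha}^2(|t-s|)\big)$ is positive semidefinite, and the Kolmogorov extension theorem produces a centered Gaussian process with covariance $R$, which then has $B_0^{\delta_{\theta_\alpha}}(0)=0$, the prescribed increment variance and stationary increments. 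I would favour the spectral construction, since the proposition asks for a way to construct the process and this representation makes both the increment variance and the stationarity transparent.
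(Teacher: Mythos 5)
Your proposal is correct and follows essentially the same route as the paper: both construct $B_0^{\delta_{\theta_\alpha}}$ via the harmonizable representation $\dint_0^{\infty}\frac{1-\cos(xt)}{(\pi\theta_\alpha(x))^{1/2}}\,\mathcal{W}_1(dx)+\dint_0^{\infty}\frac{\sin(xt)}{(\pi\theta_\alpha(x))^{1/2}}\,\mathcal{W}_2(dx)$ with two independent Brownian motions (your sign convention $\cos(t\lambda)-1$ gives the same law), and read off the increment variance by the isometry and the identity $(\cos A-\cos B)^2+(\sin A-\sin B)^2=2(1-\cos(A-B))$. Your write-up simply makes explicit the $L^2$-integrability and stationarity checks that the paper leaves as ``a simple calculation.''
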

\begin{proof}
First, let $\mu$ be the measure on $\R_+$ defined by $\mu(dx)=\left(\pi \theta_{\alpha}(x)\right)^{-1}\textbf{1}_{\R_+}(x)\,dx$. Let $\mathcal{W}_1$ and $\mathcal{W}_2$ be two independent Brownian motion on $\R_+ $. 
Now we consider the Gaussian processes $B_0^{\delta_{\theta_{\alpha}}}$ represented as follows
\begin{align}\label{spectral representation}
	B^{\delta_{\theta_{\alpha}}}_0(t):=\dint_0^{\infty}\frac{(1-\cos xt )}{\left(\pi\,\theta_{\alpha}(x)\right)^{1/2}}\,\mathcal{W}_1(dx)+\dint_0^{\infty} \frac{\sin xt}{\left(\pi\,\theta_{\alpha}(x)\right)^{1/2}}\,\mathcal{W}_2(dx).
\end{align}
A simple calculation gives
\begin{align}\label{conseq spec rep 1}
	\mathbb{E}\left[(B^{\delta_{\theta_{\alpha}}}_0(t+h)-B^{\delta_{\theta_{\alpha}}}_0(t))\right]^2=2\dint_{0}^{\infty}(1-\cos(x\,h))\mu(dx)=\delta^2_{\theta_{\alpha}}(h).
\end{align}
\end{proof}
Using \eqref{equiv increm var} we get the following useful estimates helping us to provide uniform modulus of continuity of $B^{\delta_{\theta_{\alpha}}}$ : there exist $h_0>0$ and a constant $q\geq 1$  such that  
\begin{align}\label{conseq bnds increm var}
q^{-1}\,h^{2\alpha}\,{\ell_{\theta_{\alpha}}^2(h)} \leq \mathbb{E}\left[B^{\delta_{\theta_{\alpha}}}_0(t+h)-B^{\delta_{\theta_{\alpha}}}_0(t)\right]^2\leq q\,h^{2\alpha}\,{\ell_{\theta_{\alpha}}^2(h)}.
\end{align}
for all $t\in \R_+$ and $h\in [0,h_0)$.

Notice that, due to \eqref{conseq bnds increm var}, all results of our interest are not sensitive to changing $\ell_{\delta_{\theta_{\alpha}}}$ by $\ell_{\theta_{\alpha}}$. On the other hand, using $\ell_{\theta_{\alpha}}$ instead of $\ell_{\delta_{\theta_{\alpha}}}$ is especially important when the regularity condition \eqref{cond on L_theta} on $L_{\theta}(\cdot)$ is needed. Such condition leads to \eqref{concav cond} for $\ell_{\theta}(\cdot)$, which is required for Lemma \ref{req on slow var}. The following result is about the uniform modulu of continuity of the Gaussian process $B^{\delta_{\theta_{\alpha}}}$
\begin{theorem}\label{modulus of continuity}
Let $B^{\delta_{\theta_{\alpha}}}:=(B^{\delta_{\theta_{\alpha}}}_1(t),....,B^{\delta_{\theta_{\alpha}}}_d(t))$ be a $d$-dimensional Gaussian process, where $B^{\delta_{\theta_{\alpha}}}_1,...,B^{\delta_{\theta_{\alpha}}}_d$ are $d$ independent copies of $B^{\delta_{\theta_{\alpha}}}_0$. Let $\,0<a<b< 1$ and  $\mathbf{I}:=[a,b]$. Then $B^{\delta_{\theta_{\alpha}}}\in C^{\mathsf{w}_{{\alpha,{\ell_{\theta_{\alpha}}}}}}(\mathbf{I})$ a.s.  with $\mathsf{w}_{\alpha,{\ell_{\theta_{\alpha}}}}$ defined by 
\begin{align}\label{modulo of our drift}
	\mathsf{w}_{{\alpha,{\ell_{\theta_{\alpha}}}}}(r):=\,r^{\alpha}\,{\ell_{\theta_{\alpha}}(r)}\log^{1/2}(1/r).
\end{align}
In other word, there is an almost sure finite random variable $\eta=\eta(\omega)$, such that for almost all $\omega \in \Omega$ and  for all $0<r\leq \eta(\omega)$, we have 
\begin{equation}\label{estim mod cont}
	\underset{\substack{s,t \in \mathbf{I}\\ |t-s|\leq r}}{\sup}\left\lVert B^{\delta_{\theta_{\alpha}}}(t)-B^{\delta_{\theta_{\alpha}}}(s)\right\rVert \leq \,\mathtt{c}_1\, \mathsf{w}_{\alpha,{\ell_{\theta_{\alpha}}}}(r),
\end{equation}
where $\mathtt{c}_1$ is a universal positive constant. 
\end{theorem}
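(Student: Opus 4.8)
The plan is to establish the upper modulus of continuity for the one-dimensional process $B_0^{\delta_{\theta_\alpha}}$ by a dyadic chaining argument combined with Borel--Cantelli, and then to pass to the $d$-dimensional process by intersecting the $d$ full-probability events coming from the independent coordinates. Throughout I would write $\sigma(h):=q^{1/2}\,h^{\alpha}\,\ell_{\theta_\alpha}(h)=q^{1/2}\,\mathsf{v}_{\alpha,\ell_{\theta_\alpha}}(h)$, so that the upper bound in \eqref{conseq bnds increm var} reads $\mathbb{E}\big(B_0^{\delta_{\theta_\alpha}}(t+h)-B_0^{\delta_{\theta_\alpha}}(t)\big)^2\le \sigma^2(h)$ for $h\in[0,h_0)$; note that only this one-sided variance bound is needed, not the lower bound. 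Since $\ell_{\theta_\alpha}$ is normalized slowly varying and $\alpha\in(0,1)$, the function $\sigma$ is regularly varying of index $\alpha$; by assertion~1 of Lemma~\ref{req on slow var} it is increasing on some $(0,x_1]$, and by slow variation one has the doubling bound $\sigma(2h)\le C_0\,\sigma(h)$ for small $h$. These two structural facts are what the chaining will exploit.

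Next I would run the chaining on the dyadic grid $t_j^{(n)}:=a+j(b-a)2^{-n}$, $0\le j\le 2^n$, setting $\Delta_n:=\max_{0\le j<2^n}\big|B_0^{\delta_{\theta_\alpha}}(t_{j+1}^{(n)})-B_0^{\delta_{\theta_\alpha}}(t_j^{(n)})\big|$. Each increment is centred Gaussian with variance at most $\sigma^2\big((b-a)2^{-n}\big)$, so the standard Gaussian tail estimate $\mathbb{P}(|Z|>\lambda\sigma)\le 2e^{-\lambda^2/2}$ together with a union bound over the $2^n$ increments gives
\begin{equation*}
\mathbb{P}\Big\{\Delta_n> K\sqrt{n}\,\sigma\big((b-a)2^{-n}\big)\Big\}\le 2^{\,n+1}\,e^{-K^2 n/2}.
\end{equation*}
Choosing any $K>\sqrt{2\log 2}$ makes the right-hand side summable in $n$, so Borel--Cantelli yields that almost surely, for all $n$ large enough, $\Delta_n\le K\sqrt{n}\,\sigma\big((b-a)2^{-n}\big)$.

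I would then telescope: writing $\pi_n(t)$ for the nearest level-$n$ dyadic point, consecutive approximations differ by at most one level-$(m+1)$ increment, whence $|B_0^{\delta_{\theta_\alpha}}(t)-B_0^{\delta_{\theta_\alpha}}(\pi_n(t))|\le\sum_{m\ge n}\Delta_{m+1}$; for $|t-s|\le r$ with $(b-a)2^{-n-1}<r\le(b-a)2^{-n}$ the points $\pi_n(s),\pi_n(t)$ coincide or are adjacent, giving $\sup_{|t-s|\le r}|B_0^{\delta_{\theta_\alpha}}(t)-B_0^{\delta_{\theta_\alpha}}(s)|\le 3\sum_{m\ge n}\Delta_m$. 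The decisive step is to sum the a.s. bounds on $\Delta_m$: I must show
\begin{equation*}
\sum_{m\ge n}\sqrt{m}\,\sigma\big((b-a)2^{-m}\big)\le C\,\sqrt{n}\,\sigma\big((b-a)2^{-n}\big).
\end{equation*}
This is exactly where regular variation of positive index enters: by Potter's bounds the ratio $\sigma\big((b-a)2^{-m}\big)/\sigma\big((b-a)2^{-n}\big)$ is dominated by $C\,2^{-(m-n)\alpha'}$ for some $0<\alpha'<\alpha$ and all large $n$, so the polynomial factor $\sqrt{m}$ is absorbed by the geometric decay and the series is controlled by its leading term. I expect this summation/domination — keeping careful track of the slowly varying factor $\ell_{\theta_\alpha}$ throughout, and interpolating from the dyadic scales to a continuous radius $r$ via the doubling property — to be the main technical obstacle; the Gaussian estimates themselves are routine.

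Finally, combining the last two displays with $n\asymp\log_2((b-a)/r)$ gives $\sqrt{n}\asymp\log^{1/2}(1/r)$ and $\sigma\big((b-a)2^{-n}\big)\asymp\sigma(r)\asymp r^{\alpha}\ell_{\theta_\alpha}(r)$, so that $\sup_{|t-s|\le r}|B_0^{\delta_{\theta_\alpha}}(t)-B_0^{\delta_{\theta_\alpha}}(s)|\le C'\,r^{\alpha}\ell_{\theta_\alpha}(r)\log^{1/2}(1/r)=C'\,\mathsf{w}_{\alpha,\ell_{\theta_\alpha}}(r)$ for all small $r$, with an almost surely finite threshold $\eta(\omega)$. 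Using $\|B^{\delta_{\theta_\alpha}}(t)-B^{\delta_{\theta_\alpha}}(s)\|\le\sqrt{d}\,\max_{1\le i\le d}|B_i^{\delta_{\theta_\alpha}}(t)-B_i^{\delta_{\theta_\alpha}}(s)|$ and intersecting the $d$ independent full-probability events delivers \eqref{estim mod cont} with a universal constant $\mathtt{c}_1$, and hence $B^{\delta_{\theta_\alpha}}\in C^{\mathsf{w}_{\alpha,\ell_{\theta_\alpha}}}(\mathbf{I})$ almost surely.
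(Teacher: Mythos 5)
Your proof is correct, but it follows a genuinely different route from the paper's. The paper does not chain by hand: it invokes the general result of Marcus and Rosen (\cite[Theorem 7.2.1]{Marcus-Rosen}) for Gaussian processes with stationary increments, which yields the modulus $\widetilde{\mathsf{w}}_{\alpha,\ell_{\theta_\alpha}}(r)= r^{\alpha}\ell_{\theta_\alpha}(r)\log^{1/2}(1/r)+\int_0^r u^{\alpha-1}\ell_{\theta_\alpha}(u)\log^{-1/2}(1/u)\,du$, and then disposes of the integral term by a L'H\^opital argument showing it is $o\left(r^{\alpha}\ell_{\theta_\alpha}(r)\right)$, so that $\widetilde{\mathsf{w}}_{\alpha,\ell_{\theta_\alpha}}\leq 2\,\mathsf{w}_{\alpha,\ell_{\theta_\alpha}}$ near zero. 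Your dyadic chaining with Gaussian tails, Borel--Cantelli, and Potter's bounds reproves from scratch what that citation delivers; in exchange you get a self-contained argument that uses only the \emph{upper} bound in \eqref{conseq bnds increm var}, and the role of regular variation (absorbing the $\sqrt{m}$ factor into the geometric decay $2^{-(m-n)\alpha'}$) is made completely explicit, whereas the paper's route is shorter and outsources the hard analysis to a standard reference. Two small points you should tighten, neither of which is a genuine gap: first, with $r\leq (b-a)2^{-n}$ the projections $\pi_n(s),\pi_n(t)$ can be up to \emph{two} grid spacings apart (not merely adjacent), so the middle chaining term is bounded by $2\Delta_n$ and your constant $3$ becomes $4$; second, the telescoping identity $B_0^{\delta_{\theta_\alpha}}(t)-B_0^{\delta_{\theta_\alpha}}(\pi_n(t))=\sum_{m\geq n}\bigl(B_0^{\delta_{\theta_\alpha}}(\pi_{m+1}(t))-B_0^{\delta_{\theta_\alpha}}(\pi_m(t))\bigr)$ at a non-dyadic $t$ requires $B_0^{\delta_{\theta_\alpha}}(\pi_m(t))\to B_0^{\delta_{\theta_\alpha}}(t)$, i.e.\ you first obtain the modulus on the dyadics and must then pass to a continuous (separable) modification, which agrees with the process at every fixed $t$ by $L^2$-continuity of $B_0^{\delta_{\theta_\alpha}}$; this is routine but should be said, since continuity is precisely what is being proved.
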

\begin{proof}
First, we start by considering the function
\begin{equation}\label{mod-cont}
	\widetilde{\mathsf{w}}_{\alpha,{\ell_{\theta_{\alpha}}}}(r)= r^{\alpha}\,{\ell_{\theta_{\alpha}}(r)}\,\log^{1/2}(1/r)\,+\dint_0^r \frac{u^{\alpha}\,{\ell_{\theta_{\alpha}}(u)}}{u[\log 1 / u]^{1 / 2}} d u. 
\end{equation}
It is simple to verify that $\widetilde{\mathsf{w}}_{\alpha,{\ell_{\theta_{\alpha}}}}(\cdot)$ is well defined in a neighborhood of zero with $\lim_{r\rightarrow 0} \widetilde{\mathsf{w}}_{\alpha,{\ell_{\theta_{\alpha}}}}(r)=0$. 
Since $B^{\delta_{\theta_{\alpha}}}$ satisfies \eqref{conseq bnds increm var}, then it follows from \cite[Theorem 7.2.1 p. 304]{Marcus-Rosen} that $\widetilde{\mathsf{w}}_{{\alpha,{\ell_{\theta_{\alpha}}}}}(\cdot)$ is a uniform modulus of continuity of $B^{\delta_{\theta_{\alpha}}}$. That is there exists a constant $\mathtt{c}_{2} \geq 0$ such that
\begin{equation}
	\limsup_{\eta \rightarrow 0} \sup _{\substack{|u-v| \leq \eta \\ u, v \in I}} \frac{\lVert B^{\delta_{\theta_{\alpha}}}(u)-B^{\delta_{\theta_{\alpha}}}(v)\rVert}{{\widetilde{\mathsf{w}}_{\alpha,{\ell_{\theta_H}}}}(\eta)} \leq \mathtt{c}_{2} \quad a . s .
\end{equation}
Hence there exists an almost surely positive random variable  $\eta_0$ such that for all $0<\eta <\eta_0$, we have 
\begin{equation}\label{mod cont 1}
	\sup _{\substack{|u-v| \leq \eta \\ u, v \in I}} \left\lVert B^{\delta_{\theta_{\alpha}}}(u)-B^{\delta_{\theta_{\alpha}}}(v)\right\rVert \leq \mathtt{c}_{2}\, \widetilde{\mathsf{w}}_{\alpha,{\ell_{\theta_H}}}(\eta).
\end{equation}
The presence of an integral in \eqref{mod-cont} suggests that the modulus of continuity is artificial, which leads to seek a simpler and more practical one. So it is easy to check, by using Hopital's rule argument, that
$$
\dint_0^r \frac{u^{\alpha}\,{\ell_{\theta_H}(u)}}{u(\log 1 / u)^{1 / 2}} d u= o\left( r^{\alpha}\,{\ell_{\theta_H}(r)} \right).
$$
Therefore there exists $r_0>0$ such that 
$$
\widetilde{\mathsf{w}}_{\alpha,L^{-1/2}_{\theta_{\alpha}}(1/\cdot)}(r)\leq 2\,r^{\alpha}\,{\ell_{\theta_H}(r)}\,\log^{1/2}(1/r)=2\, \mathsf{w}_{\alpha,{\ell_{\theta_H}}}(r)\quad \text{ for all $r<r_0$}.$$ 
Hence using this fact and \eqref{mod cont 1}, we obtain that almost surely
\begin{equation*}
	\sup _{\substack{u, v \in \mathbf{I} \\ |u-v| \leq \eta }} \left\lVert B^{\delta_{\theta_{\alpha}}}(u)-B^{\delta_{\theta_{\alpha}}}(v)\right\rVert \leq 2\,\mathtt{c}_{2}\,\mathsf{w}_{\alpha,{\ell_{\theta_H}}}(\eta) \quad \text{ for all $\eta <\eta_0 \wedge r_0$},
\end{equation*}
which completes the proof.
\end{proof}
\begin{remark}
It is noteworthy that \eqref{equiv increm var} ensures $C^{\mathsf{w}_{\delta_{\theta_{\alpha}}}}(\mathbf{I}) \equiv C^{\mathsf{w}_{\alpha,{\ell_{\theta_H}}}}(\mathbf{I})$ where $\mathsf{w}_{\delta_{\theta_{\alpha}}}(\cdot)=\delta_{\theta_{\alpha}}(\cdot)\ log^{1/2}(1/\cdot)$. 
\end{remark}

\subsection{Appendix B} 
Our aim here is to show the existence of probability measures $\nu_1$ and $\nu_2$ supported on two Borel sets $E_1$ and $E_2$ respectively, and satisfying \eqref{modif Dav-Ahlf reg 1} and \eqref{modif Dav-Ahlf reg 2}.
First, let $\varphi$ be a continuous increasing function on $\mathbb{R}_+$, such that
\begin{equation}\label{cond on phi}
\varphi(0)=0\quad \text{and} \quad \varphi(2x)< 2\varphi(x)\quad \text{for all $x\in (0,x_0)$},
\end{equation}
for some 
$x_0\in (0,1)$.
Our aims is to construct a Cantor type set $E_{\varphi}\subset (0,x_0)$ which support a probability measure $\nu_{\varphi}$ with the property 
$\nu_{\varphi}([a-r,a+r])\asymp \varphi(r)$. 
\begin{proposition}\label{ex of modif Dav-Ahlf reg set}
Let $\varphi$ be a function satisfying \eqref{cond on phi}. Then there exists a Borel set $E_{\varphi}\subset [0,x_0]$ which support a probability measure $\nu$ such that 
\begin{equation}
	tc_1^{-1}\,\varphi(r)\leq \nu([a-r,a+r])\leq c_1\,\varphi(r) \quad \text{for all $r\in [0,x_0]$ and\, $a\in E_{\varphi}$}.
\end{equation}
\begin{proof} We will construct a compact set $E_{\varphi}$ inductively as follows: Let $I_0\subset [0,x_0)$ be a closed interval of length ${l}_0<x_0$. First, let $l_1:=\varphi^{-1}\left(\varphi(l_0)2^{-1}\right)$, and let $I_{1,1}$ and $I_{1,2}$ two subintervals of $I_0$ with length $l_1$. For $k\geq 2$, we construct inductively a family of intervals $\{I_{k,j}:j=1,...,2^{k}\}$, in the following way: Let $l_k:=\varphi^{-1}\left(\varphi(l_0)2^{-k}\right)$ and the intervals $I_{k,1},...,I_{k,2^k}$ are constructed by keeping two intervals of length $l_k\,$ from each interval $I_{k-1,i}$ $i=1,\ldots,2^{k-1}$ of the previous iteration. We define $E_{\varphi,k}$ to be the union of the intervals $\left(I_{k,j}\right)_{j=1,...,2^k}$ of each iteration. The compact set $E_{\varphi}$ is defined to be the limit set of this construction, namely 
	\begin{equation}
		E_{\varphi}:=\bigcap_{k=1}^{\infty}E_{\varphi,k}.
	\end{equation}
	Now we define a probability measure $\nu$ on $E_{\varphi}$, by the mass distribution principle \cite{Falconer}. Indeed, for any $k\geq 1$ let us define
	\begin{equation}\label{mass distr}
		\nu(I_{k,i})=2^{-k} \quad \text{ for $i=1,\ldots,2^k$}
	\end{equation}
	and $\nu\left([0,1] \setminus E_{\varphi,k}\right)=0$. Then by \cite[Proposition 1.7]{Falconer}, $\nu$ is a probability measure supported on $E_{\varphi}$. For $a\in E_{\varphi}$ and $0<\eta<\varphi(l_0)$ small enough. Let $k$ be the smallest integer such that $\varphi(l_0)\,2^{-(k+1)}<\eta\leq \varphi(l_0)\,2^{-k}$, then it is not hard to check that the interval $[a-\varphi^{-1}(\eta),a+\varphi^{-1}(\eta)]$ intersects at most $3$ intervals $I_{k,i}$, and contains at least one interval $I_{k+1,j}$ 
	.
	Therefore, using \eqref{mass distr} we obtain
	\begin{equation}
		(1/2\varphi(l_0))\,\eta
		\leq\nu\left([a-\varphi^{-1}(\eta),a+\varphi^{-1}(\eta)]\right)\leq  
		(6/\varphi(l_0))\,\eta.
	\end{equation}
	By making a change of variable $r:=\varphi^{-1}(\eta)$, we get the desired result.
\end{proof}
\end{proposition}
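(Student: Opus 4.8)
The plan is to realize $E_{\varphi}$ as a Cantor-type set whose defining intervals have lengths tuned so that the natural uniform measure on it has ball mass comparable to $\varphi$. First I would fix a seed interval $I_0\subset[0,x_0)$ of length $l_0<x_0$ and set $l_k:=\varphi^{-1}\left(2^{-k}\varphi(l_0)\right)$ for $k\ge 0$; since $\varphi$ is continuous, strictly increasing, and $\varphi(0)=0$, the inverse $\varphi^{-1}$ is well defined near zero and $l_k\downarrow 0$. At the $k$-th stage each surviving interval of length $l_{k-1}$ is replaced by two of its subintervals of length $l_k$ (placed, say, at its two endpoints), producing the family $\{I_{k,j}:1\le j\le 2^{k}\}$, and I would take $E_{\varphi}:=\bigcap_{k}\bigcup_{j}I_{k,j}$, a nonempty compact set.

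The construction is only legitimate, and the later counting only works, because of the doubling hypothesis \eqref{cond on phi}. Applying the increasing function $\varphi$ to the candidate inequality $2l_{k+1}<l_k$ and using $\varphi(l_{k+1})=\tfrac12\varphi(l_k)$ together with $\varphi(2l_{k+1})<2\varphi(l_{k+1})$ gives $\varphi(2l_{k+1})<\varphi(l_k)$, hence $2l_{k+1}<l_k$ by monotonicity. Thus the two children of length $l_k$ genuinely fit inside a parent of length $l_{k-1}$ with a gap $l_{k-1}-2l_k>0$, which is the smallest gap present at level $k$. I would then define $\nu$ by the mass distribution principle, assigning mass $2^{-k}$ to each $I_{k,j}$; by \cite[Proposition 1.7]{Falconer} this extends to a Borel probability measure supported on $E_{\varphi}$.

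The heart of the argument is the two-sided estimate. Given $a\in E_{\varphi}$ and a small radius $r$, I would pass to the scale $\eta=\varphi(r)$ and choose $k$ with $2^{-(k+1)}\varphi(l_0)<\eta\le 2^{-k}\varphi(l_0)$, equivalently $l_{k+1}<r\le l_k$. For the upper bound, the ball $[a-r,a+r]$ has radius $r\le l_k$, and since the gaps between level-$k$ intervals are strictly positive it can meet only the interval $I_{k,i_0}$ containing $a$ and at most one neighbour on each side, giving $\nu([a-r,a+r])\le 3\cdot 2^{-k}\le (6/\varphi(l_0))\,\varphi(r)$. For the lower bound, $r>l_{k+1}$ forces $[a-r,a+r]$ to contain the whole level-$(k+1)$ interval through $a$, whence $\nu([a-r,a+r])\ge 2^{-(k+1)}\ge (1/(2\varphi(l_0)))\,\varphi(r)$. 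Taking $c_1=\max\{6/\varphi(l_0),\,2\varphi(l_0),\,1\}$ yields the claim for small $r$, and the remaining range up to $x_0$ follows by enlarging $c_1$, since $\nu$ is a probability measure and $\varphi(r)$ is there bounded away from $0$ and $\infty$.

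The main obstacle I anticipate is geometric bookkeeping rather than anything deep: one must check that the doubling condition propagates to the separation $2l_{k+1}<l_k$ at \emph{every} level, including the first step from $I_0$, so that the ``at most three / at least one'' interval counts are exactly correct, and one must handle the endpoint cases when $r$ lands near $l_k$ or near $x_0$. Everything else reduces to the standard self-similar-measure computation.
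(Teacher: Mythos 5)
Your proof is correct and takes essentially the same route as the paper's: the same Cantor-type construction with $l_k=\varphi^{-1}\left(2^{-k}\varphi(l_0)\right)$, the same mass distribution $\nu(I_{k,j})=2^{-k}$, and the same counting argument (the ball meets at most three level-$k$ intervals and contains at least one level-$(k+1)$ interval), yielding the same constants $6/\varphi(l_0)$ and $1/(2\varphi(l_0))$. Your explicit check that the doubling hypothesis forces the separation $2l_{k+1}<l_k$, and your treatment of radii up to $x_0$, simply make precise details the paper leaves implicit.
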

\noindent Now, we can remark that examples of measures ${\nu}_1$ and $\nu_2$ satifying \eqref{modif Dav-Ahlf reg 1} and  \eqref{modif Dav-Ahlf reg 2} respectively, could be deduced from Proposition \ref{ex of modif Dav-Ahlf reg set} with the functions $\varphi_1(r):=r^{\alpha}\,\log^{\beta}(e/r)$ and $\varphi_2(r):=r^{\alpha}\,\log^{-\beta}(e/r)$ for $\alpha \in (0,1)$ and $\beta>1$.


\begin{thebibliography}{99} 
\bibitem{Bierme Lacaux Xiao 2009} {\bibname H. Biermé}, {C. Lacaux}  and  {Y. Xiao}. Hitting probabilities and the Hausdorff dimension of the inverse images of anisotropic Gaussian random fields. Bull London Math. Soc. 41 (2009), 253-273. 

\bibitem{Bingham et al} {N. H. Bingham},  {C. M. Goldie} and {J. L. Teugels}. \textit{Regular Variation} Cambridge University Press Cambridge, (1987).

\bibitem{Bishop Peres} C. J. Bishop and Y. Peres. Fractals in probability and analysis. Cambridge Studies in Advanced Mathematics, 162 (2017), Cambridge University Press, Cambridge. 


\bibitem{Chen Xiao 2012} Z. Chen and Y. Xiao. On intersections of independent anisotropic Gaussian random fields. Sci. China Math. 55 (2012), no. 11, 2217-2232.

\bibitem{Chen2013} Z. Chen. Hitting Probabilities and Fractal Dimensions of Multiparameter Multifractional Brownian Motion. Acta Math. Sin. Engl. Ser. 29 (2013) , no. 9, 1723–1742.


\bibitem{Dalang Nualart 2004} R. C. Dalang and E. Nualart. Potential theory for hyperbolic SPDEs. Ann. Probab. 32 (2004), no. 3A, 2099-2148.


\bibitem{Dalang Khoshnevisan Nualart 2007}	R.C. Dalang,  D. Khoshnevisan and  E. Nualart. Hitting probabilities for systems of non-linear stochastic heat equations with additive noise. ALEA Lat. Am. J. Probab. Math. Stat. 3 (2007), 231–271. 


\bibitem{Dalang Khoshnevisan Nualart 2009}	R.C. Dalang, D. Khoshnevisan and  E. Nualart. Hitting probabilities for systems of non-linear stochastic heat equations with multiplicative noise. Probab. Theory Related Fields. 144 (2009), 371– 427. 


\bibitem{Erraoui Hakiki} M. Erraoui and Y. Hakiki. Images of fractional Brownian motion with deterministic drift: Positive Lebesgue measure and non-empty interior. Math. Proc. Cambridge Philos. Soc. 173 (2022), 693-713.  

\bibitem{Falconer} K. J. Falconer. \textit{Fractal geometry-mathematical foundations and applications}. John Wiley Sons, Chichester, (1990).



\bibitem{Hakiki-Viens} Y. Hakiki and  F. Viens. Irregularity scales for Gaussian processes: Hausdorff dimensions and hitting probabilities (2023).



\bibitem{Hutchinson} J. E. Hutchinson. Fractals and self similarity, Indiana Univ. Math. J., 30 (1981), 713-747.


\bibitem{Kahane} J. P. Kahane. \textit{Some random series of functions}, Second edition. Cambridge Studies in Advanced Mathematics, 5. Cambridge University Press, Cambridge, (1985).

\bibitem{Kak44} S. Kakutani. Two dimensional Brownian motion and harmonic functions. Proc. Imp. Acad. Tokyo 20 (1944), 648-652.

\bibitem{Khoshnevisan 1997} D. Khoshnevisan. Escape rates for Lévy processes. Studia Sci. Math. Hungar. 33 (1997), no. 1-3, 177-183. 


\bibitem{Khoshnevisan shi 1999}	D. Khoshnevisan and Z. Shi. Brownian sheet and capacity. Ann. Probab. 27 (1999), 1135–1159.



\bibitem{Marcus-Rosen} M. Marcus and J. Rosen. \textit{Markov Processes, Gaussian Processes, and Local Times}. Cambridge Studies in Advanced Mathematics (2006).

\bibitem{Mattila} P. Mattila. Geometry of sets and measures in Euclidean spaces. Fractals and rectifiability. Cambridge University Press, Cambridge (1999).
 
\bibitem{MP} {P. Mörters} and {Y. Peres}. \textit{Brownian motion.} Cambridge Series in Statistical and Probabilistic Math. vol. 30 (Cambridge University Press, 2010).




\bibitem{Peres Sousi} Y. Peres and P. Sousi. Brownian motion with variable drift: 0-1 laws, hitting probabilities and Hausdorff dimension. Math. Proc. Cambridge Philos. Soc. 153 (2012), no. 2, 215-234.   

\bibitem{Peres&Sousi2016} Y. Peres and P. Sousi. Dimension of fractional Brownian motion with variable drift. Probab. Theory Related Fields 165 (2016), no. 3-4, 771-794.  




\bibitem{Pruitt Taylor} W. E. Pruitt and S. J. Taylor. The potential kernel and hitting probabilities for the general stable process in $R\sp{N}$. Trans. Amer. Math. Soc. 146 (1969), 299-321. 





\bibitem{Taylor61} S. J. Taylor. On the connexion between Hausdorff measures and generalized capacity. Math. Proc. Cambridge Philos. Soc. 57 (1961), no 3, 524-531.


\bibitem{Xiao 1996} Y. Xiao. Packing measure of the sample paths of fractional Brownian motion. Trans. Amer. Math. Soc. 348 (1996), no. 8, 3193-3213. 


\bibitem{Xiao 2009} Y. Xiao. Sample path properties of anisotropic Gaussian random fields. A Minicourse on Stochastic Partial Differential Equations. Lecture Notes in Math, 1962 (2009), 145-212. New York: Springer. \end{thebibliography}
\end{document}